\newcounter{cst}
\newcommand{\ctel}[1]{C_{\refstepcounter{cst}\label{#1}\thecst}}
\newcommand{\cter}[1]{C_{\ref{#1}}}
\def\magenta{}
\def\be{\begin{equation}}
\def\ee{\end{equation}}
\def\ov#1{\overline{#1}}
\def\grad{\nabla}
\def\div{\nabla\cdot}
\def\p{\partial}
\def\O{\Omega}
\def\k{\kappa}
\def\R{\mathbb{R}}
\def\Ee{{\mathcal E}}
\def\Dd{{\mathcal D}}
\def\Oo{{\mathcal O}}
\def\Tt{{\mathcal T}}
\def\Hh{{\mathcal H}}
\def\bHh{\boldsymbol{\mathcal H}}
\def\bbM{\mathbb{M}}
\def\wt{\widetilde}
\def\1{{\bf 1}}
\def\a{\alpha}
\def\b{\beta}
\def\eps{\varepsilon}
\def\G{\Gamma}
\def\sig{\sigma}
\def\brho{{\boldsymbol \rho}}
\def\bu{{\boldsymbol u}}
\def\bphi{{\boldsymbol \phi}}
\def\bxi{{\boldsymbol \xi}}
\def\bF{{\boldsymbol F}}
\def\bG{{\boldsymbol G}}
\def\Ff{\mathcal{F}}
\newtheorem{thm}{Theorem}[section]
\newtheorem{lem}[thm]{Lemma}
\newtheorem{prop}[thm]{Proposition}
\newtheorem{coro}[thm]{Corollary}
\newtheorem{Def}[thm]{Definition}
\newtheorem{rem}[thm]{Remark}
\newcommand{\oo}{\infty}
\newcommand{\logLogSlopeTriangle}[5]
{
	\pgfplotsextra
	{
		\pgfkeysgetvalue{/pgfplots/xmin}{\xmin}
		\pgfkeysgetvalue{/pgfplots/xmax}{\xmax}
		\pgfkeysgetvalue{/pgfplots/ymin}{\ymin}
		\pgfkeysgetvalue{/pgfplots/ymax}{\ymax}
		
		% Calculate auxilliary quantities, in relative sense.
		\pgfmathsetmacro{\xArel}{#1}
		\pgfmathsetmacro{\yArel}{#3}
		\pgfmathsetmacro{\xBrel}{#1-#2}
		\pgfmathsetmacro{\yBrel}{\yArel}
		\pgfmathsetmacro{\xCrel}{\xArel}
		%\pgfmathsetmacro{\yCrel}{ln(\yC/exp(\ymin))/ln(exp(\ymax)/exp(\ymin))}
		
		\pgfmathsetmacro{\lnxB}{\xmin*(1-(#1-#2))+\xmax*(#1-#2)} % in [xmin,xmax].
		\pgfmathsetmacro{\lnxA}{\xmin*(1-#1)+\xmax*#1} % in [xmin,xmax].
		\pgfmathsetmacro{\lnyA}{\ymin*(1-#3)+\ymax*#3} % in [ymin,ymax].
		\pgfmathsetmacro{\lnyC}{\lnyA-#4*(\lnxA-\lnxB)}
		\pgfmathsetmacro{\yCrel}{\lnyC-\ymin)/(\ymax-\ymin)}
		
		% Define coordinates for \draw. MIND THE 'rel axis cs' as opposed to the 'axis cs'.
		\coordinate (A) at (rel axis cs:\xArel,\yArel);
		\coordinate (B) at (rel axis cs:\xBrel,\yBrel);
		\coordinate (C) at (rel axis cs:\xCrel,\yCrel);
		
		% Draw slope triangle.
		\draw[#5]   (A)-- node[pos=0.5,anchor=north] {\scriptsize{1}}
		(B)--
		(C)-- node[pos=0.,anchor=east] {\scriptsize{#4}} %% node[pos=0.5,anchor=west] {#4}
		(A);
	}
}
\date{}
\title[SQRA finite volumes for nonlinear drift-diffusion equations]{On the square-root approximation finite volume scheme for nonlinear drift-diffusion equations}
\author{Cl\'ement Canc\`es}
\address{Cl\'ement Canc\`es (\href{mailto:clement.cances@inria.fr}{\tt clement.cances@inria.fr}): Univ. Lille,  Inria, CNRS, UMR 8524  - Laboratoire Paul Painlevé, F-59000 Lille, France.}
\author{Juliette Venel}
\address{Juliette Venel (\href{mailto:juliette.venel@uphf.fr}{\tt juliette.venel@uphf.fr}): Univ. Polytechnique Hauts-de-France, INSA Hauts-de-France, CERAMATHS -- Laboratoire de Matériaux Céramiques et de Mathématiques, 
%FR CNRS 2037, 
F-59313 Valenciennes, France.}
\begin{document}

\begin{abstract}
We study a finite volume scheme for the approximation of the solution to convection diffusion equations with nonlinear convection and Robin boundary conditions. The scheme builds on the interpretation of such a continuous equation as the hydrodynamic limit of some simple exclusion jump process. We show that the scheme admits a unique discrete solution, that the natural bounds on the solution are preserved, and that it encodes the second principle of thermodynamics in the sense that some free energy is dissipated along time. The convergence of the scheme is then rigorously established thanks to compactness arguments. Numerical simulations are finally provided, highlighting the overall good behavior of the scheme.
\end{abstract}

\keywords{Nonlinear convection diffusion, finite volume method, energy dissipation, convergence.}
\subjclass[2000]{65M08, 65M12, 35K51, 35Q92, 92D25.}
\maketitle

% ================================================================
%			Presentation of the problem
% ================================================================
\section{Presentation of the problem}\label{sec:continuous}

% ----------------------------------------------------------
\subsection{The governing equations}
In this paper, we focus on the simple yet already interesting nonlinear Fokker-Planck equation
\begin{subequations}\label{eq:cons}
\begin{align}
\p_t \rho + \div F = 0,& \label{eq:cons.01}\\
 F  + \eta(\rho) \grad \phi +  \grad \rho =  0, \label{eq:cons.02}&
\end{align}
\end{subequations}
set on a connected bounded open subset $\O$ of $\R^d$, which is further assumed to be polyhedral in what follows, 
and for positive times $t \geq 0$. 
Its (finite) Lebesgue measure is denoted by $m_\O$.
While diffusion is linear, convection is not since one considers a degenerate mobility function $\eta$ of the form
\be\label{eq:eta}
\eta(\rho) = \rho(1 - \rho)
\ee
accounting for volume-filling to enforce $0 \leq \rho \leq 1$. 
In~\eqref{eq:cons}, the potential $\phi \in W^{1,\oo}(\O)$ (referred as the electric potential in what follows) 
is assumed to be given, and nonnegative without loss of generality: $\phi \geq 0$.
Our purpose can be extended to the case of a self-consistent electric potential $\phi$ 
related to the charge density $\rho$ through a Poisson equation without other difficulties than those that 
are already addressed in the literature, see for instance~\cite{CCFG21}. 

The system we consider is not isolated as in~\cite{BDPS10}, but rather 
in interaction with a surrounding environment through its boundary $\G = \p\O$. More precisely, 
we assume that there exist $\a,\b\in W^{1,\infty}(\G)$ with $\a(x) > \b(x) > 0$ for all $x\in \G$ such that 
\be\label{eq:boundary.1}
F \cdot \nu = \a \rho - \b\quad \text{on}\; \R_+ \times \G,
\ee
where $\nu$ denotes the normal to $\G$ outward w.r.t. $\O$.
The system is complemented by an initial condition $\rho^0$ compatible with the volume-filling constraint:
\be\label{eq:init}
\rho_{|_{t=0}} = \rho^0 \in L^\oo(\O; [0,1]).
\ee
Our goal is to provide some provably convergent approximation of the problem~\eqref{eq:cons}--\eqref{eq:init}. 
The stability of our numerical method, to be detailed in Section~\ref{sec:scheme}, 
mimics some stability features of the continuous problem inherited from thermodynamics.

% ----------------------------------------------------------
\subsection{Energy dissipation structure}\label{ssec:NRG}

The system~\eqref{eq:cons}--\eqref{eq:init} under consideration inherits some key property from thermodynamics. 
Defining its free energy by 
\[
\Ff(\rho) = \int_\O \left(h(\rho) + \rho \phi\right), \qquad h(\rho) = \rho \log(\rho) + (1-\rho) \log(1-\rho) + \log(2) \geq 0, 
\]
then it is dissipated within $\O$, but energy coming from the surrounding environment can enter the system 
thanks to the boundary flux~\eqref{eq:boundary.1}. 

Introducing the chemical and electrochemical potentials $\mu$ and $\xi$ respectively defined by 
\be\label{eq:mu+xi}
\mu = h'(\rho) = \log\frac\rho{1-\rho}, \qquad \xi = \mu + \phi = \frac{\delta \Ff}{\delta \rho}(\rho), \qquad 0 < \rho < 1, 
\ee
the chain rule $\grad \rho = \eta(\rho) \grad \mu$ allows to reformulate the flux 
\be\label{eq:flux.2}
F = -\eta(\rho) \grad( \phi + \mu) = -\eta(\rho) \grad \xi.
\ee
On the other hand, setting 
\be\label{eq:xiG}
\xi^\G = \phi - \log(\a/\b-1) \in W^{1,\infty}(\G) \quad \text{and}\quad \k = \sqrt{\b(\a-\b)} \in  W^{1,\infty}(\G),
\ee 
the boundary flux~\eqref{eq:boundary.1} can be expressed by the mean of a Butler-Volmer type formula:
\be\label{eq:boundary.2}
F \cdot \nu = \k \left(\rho e^{\frac12(\phi - \xi^\G)} - (1-\rho) e^{-\frac12(\phi - \xi^\G)} \right) = 
2 \k \sqrt{\rho(1-\rho)} \sinh\left(\frac12 (\xi - \xi^\G) \right).
\ee
The quantity $\xi^\G$ has to be thought as an electrochemical potential associated to the surrounding environment. 
When a quantity $n^\G = \int_\G F\cdot \nu$ of the chemical species of interest enters (resp. leaves) $\O$, the income (resp. loss) in free energy 
is equal to $n^\G \xi^\G$. Therefore, the total free energy defined (up to an additive constant) by 
\be\label{eq:Fftot}
\Ff_\text{tot}(t) = \Ff(\rho(t)) +  \int_0^t \int_\G \xi^\G F\cdot \nu, \qquad t \geq 0,
\ee
corresponds to the whole isolated system made of $\O$ and its surrounding environment. 
As the following proposition shows, it is decaying along time.

\begin{prop}\label{prop:NRG}
Let $\rho$ be a strong solution to~\eqref{eq:cons}--\eqref{eq:init}, then 
\be\label{eq:Fftot.decay}
\Ff_\text{tot}(t) \leq \Ff_\text{tot}(s) \leq \Ff(\rho^0) \leq \left(\|\phi\|_\infty + \log(2)\right) m_\O, \qquad t \geq s \geq 0. 
\ee
Moreover, there exists $\ctel{cte:Ftot}$ depending on $\G,\alpha, \beta$ and $\phi$ such that 
\be\label{eq:Ff.bound}
\Ff_\text{tot}(t)  \geq - \cter{cte:Ftot} t, \qquad t \geq 0.
\ee
\end{prop}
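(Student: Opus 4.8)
The plan is to compute $\frac{d}{dt}\Ff_\text{tot}(t)$ for a strong solution and show it is nonpositive, then integrate the resulting energy–dissipation identity to get the lower bound. First I would differentiate $\Ff(\rho(t)) = \int_\O (h(\rho) + \rho\phi)$ in time, using $\p_t \rho = -\div F$ and integrating by parts: since $\frac{\delta\Ff}{\delta\rho}(\rho) = h'(\rho) + \phi = \mu + \phi = \xi$, one gets $\frac{d}{dt}\Ff(\rho(t)) = \int_\O \xi\, \p_t\rho = -\int_\O \xi\,\div F = \int_\O \grad\xi\cdot F - \int_\G \xi\, F\cdot\nu$. Using the flux identity~\eqref{eq:flux.2}, $F = -\eta(\rho)\grad\xi$, the bulk term becomes $-\int_\O \eta(\rho)|\grad\xi|^2 \leq 0$. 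Adding the time derivative of the boundary term in~\eqref{eq:Fftot}, which is $\int_\G \xi^\G F\cdot\nu$, yields
\[
\frac{d}{dt}\Ff_\text{tot}(t) = -\int_\O \eta(\rho)|\grad\xi|^2 - \int_\G (\xi - \xi^\G)\, F\cdot\nu.
\]
Then I would invoke the Butler–Volmer form~\eqref{eq:boundary.2}: $F\cdot\nu = 2\k\sqrt{\rho(1-\rho)}\sinh\bigl(\tfrac12(\xi-\xi^\G)\bigr)$, so the boundary integrand is $2\k\sqrt{\rho(1-\rho)}\,(\xi-\xi^\G)\sinh\bigl(\tfrac12(\xi-\xi^\G)\bigr) \geq 0$ because $\k > 0$ and $s\sinh(s/2) \geq 0$ for all $s\in\R$. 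Hence $\frac{d}{dt}\Ff_\text{tot} \leq 0$, giving the monotonicity $\Ff_\text{tot}(t) \leq \Ff_\text{tot}(s)$ for $t\geq s$; taking $s=0$ and noting $\Ff_\text{tot}(0) = \Ff(\rho^0)$ gives the second inequality, while the final bound $\Ff(\rho^0) \leq (\|\phi\|_\infty + \log 2)m_\O$ follows from $0 \leq h(\rho^0) \leq \log 2$ (maximum of $h$ on $[0,1]$, attained at $\rho = 1/2$... here actually $h(1/2) = \log 2$, and $h(0)=h(1)=\log 2$ as well, so $h \leq \log 2$) and $0\leq \rho^0\phi \leq \|\phi\|_\infty$.

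For the lower bound~\eqref{eq:Ff.bound}, I would bound $\Ff_\text{tot}(t)$ from below directly: $\Ff(\rho(t)) \geq 0$ since $h\geq 0$ and $\rho\phi \geq 0$, so it suffices to lower-bound the accumulated boundary term $\int_0^t\int_\G \xi^\G F\cdot\nu$. From~\eqref{eq:boundary.1}, $|F\cdot\nu| = |\a\rho - \b| \leq \max(\|\a\|_\infty, \|\b\|_\infty)$ uniformly since $0\leq\rho\leq 1$, and $\|\xi^\G\|_{L^\infty(\G)}$ is controlled by $\|\phi\|_\infty$, $\a$ and $\b$ via~\eqref{eq:xiG}; therefore $\bigl|\int_0^t\int_\G \xi^\G F\cdot\nu\bigr| \leq \cter{cte:Ftot}\, t$ for a constant $\cter{cte:Ftot}$ depending only on $\G$, $\a$, $\b$, $\phi$ (incorporating $|\G| = \mathcal{H}^{d-1}(\G)$, the $L^\infty$ norms of $\a,\b$, and $\|\xi^\G\|_\infty$). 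Combining, $\Ff_\text{tot}(t) \geq 0 - \cter{cte:Ftot} t = -\cter{cte:Ftot} t$.

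The only mildly delicate point is the regularity bookkeeping: for a strong solution all the integrations by parts and the chain rule $\grad\rho = \eta(\rho)\grad\mu$ (hence the reformulations~\eqref{eq:flux.2} and~\eqref{eq:boundary.2}) are licit, and one should note $0 < \rho < 1$ a.e. so that $\mu$, $\xi$ are well defined — at points where $\rho\in\{0,1\}$ one works with the equivalent algebraic form~\eqref{eq:boundary.1}/\eqref{eq:boundary.2} of the flux, which extends continuously. I do not expect any genuine obstacle here; the substance of the proposition is the sign of the boundary dissipation term, which the Butler–Volmer rewriting~\eqref{eq:boundary.2} makes transparent.
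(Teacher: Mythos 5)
Your proposal is correct and follows essentially the same route as the paper: differentiate $\Ff_\text{tot}$, use $F=-\eta(\rho)\grad\xi$ for the bulk term and the Butler--Volmer form~\eqref{eq:boundary.2} for the sign of the boundary term, identify $\Ff_\text{tot}(0)=\Ff(\rho^0)$, and obtain the lower bound from $\Ff(\rho(t))\geq 0$ together with uniform bounds on $\xi^\G$ and $F\cdot\nu$. Only a cosmetic slip: $h(1/2)=0$ rather than $\log 2$ (the maximum $\log 2$ is attained at $\rho\in\{0,1\}$), but your stated bound $0\leq h\leq\log 2$ is exactly what is needed.
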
 
\begin{proof}
On first remarks that thanks to its definition~\eqref{eq:Fftot}, the initial total free energy coincides with the free energy contained in $\O$, i.e. 
$\Ff_\text{tot}(0) = \Ff(\rho^0)$ thanks to~\eqref{eq:init}. The bound on the initial energy 
$\Ff(\rho^0)$ is readily deduced from $0 \leq \rho^0 \leq 1$ and $0 \leq h(\rho^0) \leq \log(2)$.
Let us now check that $\Ff_\text{tot}$ is decaying along time. To this end, let us compute 
\be\label{eq:dissip.cont}
\frac{d \Ff_\text{tot}}{dt}(t) = \int_\O \xi \p_t \rho +  \int_\G  \xi^\G F\cdot \nu = \int_\O F \cdot \grad \xi +   \int_\G (\xi^\G - \xi) F\cdot \nu. 
\ee
Both terms on the right-hand side are nonpositive respectively because of~\eqref{eq:flux.2} and \eqref{eq:boundary.2}, so that~\eqref{eq:Fftot.decay} 
holds true. 

To establish~\eqref{eq:Ff.bound}, one only has to notice that $\Ff(\rho(t))$ is nonnegative for all $t\geq 0$, so that 
\[
 \Ff_\text{tot}(t)  = \Ff(\rho(t)) +  \int_0^t \int \xi^\G F\cdot \nu \geq  \int_0^t \int \xi^\G F\cdot \nu  \geq 
 - t \ {\|\xi^\G\|}_\infty \left\| F\cdot \nu \right\|_\infty, \qquad t \geq 0.
\]
Uniform bounds on $\xi^\G$ and on $F\cdot \nu$ easily follow from their expressions~\eqref{eq:xiG} and~\eqref{eq:boundary.1} together with $0 \leq \rho \leq 1$.
\end{proof}

The estimates highlighted in Proposition~\ref{prop:NRG} encode some strong stability in the system \eqref{eq:cons}--\eqref{eq:init}.
The precise quantification of the dissipation rate of the total free energy even provides sufficiently compactness to establish the existence 
of weak solutions to~\eqref{eq:cons}--\eqref{eq:init}. The numerical method we introduce in the next section satisfies similar 
energy dissipation estimates, on which the numerical analysis we propose relies. 

\begin{Def}\label{def:weak}
A function $\rho$ is said to be a weak solution to~\eqref{eq:cons}--\eqref{eq:init} if:
\begin{enumerate}[(i)]
\item $\rho$ belongs to $L^\oo(\R_+\times\O; [0,1]) \cap L^2_\text{loc}(\R_+; H^1(\O))$, hence its trace  $\gamma\rho$  on $\R_+\times \G$ belongs to $L^\oo(\R_+\times\G; [0,1])\cap L^2_\text{loc}(\R_+; H^{1/2}(\G))$; 
\item for all $\varphi \in C^\infty_c([0,T) \times \ov \O)$, the following equality holds:
\be
\label{eq:weak}
\iint_{\R_+ \times \O} \rho \p_t \varphi + \int_\O \rho^0 \varphi(0,\cdot) - \iint_{\R_+ \times \O} \left( \eta(\rho) \grad \phi + \grad \rho \right) \cdot \grad \varphi 
- \iint_{\R_+\times \G} \left(\a\, \gamma \rho + \b\right) \varphi = 0.
\ee
\end{enumerate}
\end{Def}
\subsection{Goal and positioning of the paper}\label{ssec:goal}

The goal of this paper is to propose a seemingly new scheme to approximate nonlinear drift diffusion equations of the form~\eqref{eq:cons}. 
Such nonlinear drift diffusion problem arises in many contexts that are often more complex than the simple one prescribed by~\eqref{eq:cons}. 
We could for instance think about systems involving several species, coupled either via cross-diffusion~\cite{BDPS10}, or via a self-consistent 
electric potential~\cite{CCMRV}. We claim that a large part of our work (in practice all excepted what is related to uniqueness) can be transposed to the more 
complex setting of~\cite{CCMRV}. To enlighten the presentation, we rather adopt here a simpler setting, where 
the potential $\phi$ is given, but still with boundary conditions of Butler-Volmer type. 

Even though this scheme has a very natural probabilistic interpretation in terms of jump process, 
its use with a deterministic approach to compute solutions to~\eqref{eq:cons} 
has not been explored so far up to our knowledge. The scheme can be thought as an extension to the case of a nonlinear mobility function $\eta$ defined 
by~\eqref{eq:eta} of the approach proposed by~\cite{LFW13} and studied in~\cite{Heida18}, even though the method proposed therein is mesh-less and yields non-explicit diffusion tensors at the limit we avoid here (see also~\cite{HT_arXiv} for a mesh-based version of the scheme). Its analysis involves in particular some cosh-type dissipation potential, which have been shown recently in~\cite{MPPR17, Frenzel19, PS22} to appear in many contexts with strong connection with Boltzmann entropy. 

Our study covers several aspects. First, since our scheme is implicit, it yields a nonlinear system for which we show well-posedness and the preservation 
of the $L^\infty$ bounds. These properties follow from the monotonicity of the scheme. 
Another interesting aspect of the scheme is its free energy stability: a discrete counterpart to Proposition~\ref{prop:NRG} is established. 
Schemes encoding the second principle of thermodynamics have raised an important interest in the last years. In the case of a linear mobility $\eta(\rho) = \rho$, 
the Scharfetter-Gummel scheme~\cite{Chatard_FVCA6}, the SQRA scheme~\cite{Heida18}, 
or the Chang-Cooper scheme~\cite{BD10} are popular solutions since the scheme for solving the 
resulting linear Fokker-Planck equation amounts to the resolution of a linear system, in opposition to more involved strategies building on the Wasserstein gradient flow interpretation of the continuous problem (with no-flux boundary conditions), see for instance
\cite{MO14, BCL16, CGT20, LLW20, CCWW_FoCM}. 
{\magenta The Scharfetter-Gummel scheme has been extended to the context of nonlinear mobilities in~\cite{EFG06}, where the computation of the numerical flux requires the numerical resolution of some scalar nonlinear problem. 
The extension of the SQRA scheme proposed in this paper is more restrictive than the approach of \cite{EFG06} concerning the nonlinearities involved in the continuous problem, 
but the resulting expression for the numerical fluxes is explicit, making the scheme much cheaper.}
 
Second, we mathematically assess the convergence of the scheme when the discretization parameters (mesh size and time step) tend to 0. 
To this end, one needs to properly quantify the free energy dissipation. This is done thanks to primal and dual dissipation potentials inspired from~\cite{Mie11, MPR14}. 
The convergence proof then relies on compactness arguments, following the strategy of~\cite{EGH00}. Our convergence result is not quantitative, since no error estimate has been derived so far. Then we show in the numerical experiments that the scheme is second order accurate in space and first order in time.  See for instance \cite{HKS21}, where error estimates for several schemes including SQRA finite volumes are derived for steady linear Fokker-Planck equations. We also highlight the fact that 
the resolution of the nonlinear system by the Newton-Raphson method is efficient, even for large CFL conditions. 
The only drawback we have noticed so far for our scheme is its loss of accuracy in the large Péclet regim.

% ================================================================
%			The finite volume scheme and main results
% ================================================================
\section{The finite volume scheme and main results}\label{sec:scheme} 

Before introducing the so-called \emph{square-root approximation} (SQRA) scheme, one first needs to introduce some notation related 
to space and time discretizations. 

% ----------------------------------------------------------
\subsection{Space and time discretizations of $\R_+ \times \O$}\label{ssec:mesh}
The SQRA finite volume scheme enters the framework of two-point flux approximation (TPFA) finite volumes, 
which are known to yield very efficient schemes but require meshes fulfilling the well-known orthogonality condition (iii) below, see for instance~\cite{Tipi, GK_Voronoi}.

\begin{Def}
\label{def:mesh}
An \emph{admissible mesh of $\O$} is a triplet $\left(\Tt, \Ee, {(x_K)}_{K\in\Tt}\right)$ such that the following conditions are fulfilled. 
\begin{enumerate}[(i)]
\item Each control volume (or cell) $K\in\Tt$ is non-empty, open, polyhedral and convex. We assume that 
\[
K \cap L = \emptyset \quad \text{if}\; K, L \in \Tt \; \text{with}\; K \neq L, 
\qquad \text{while}\quad \bigcup_{K\in\Tt}\ov K = \ov \O. 
\]
\item Each face $\sig \in \Ee$ is closed and is contained in a hyperplane of $\R^d$, with positive 
$(d-1)$-dimensional Hausdorff (or Lebesgue) measure denoted by $m_\sig = \Hh^{d-1}(\sig) >0$.
We assume that $\Hh^{d-1}(\sig \cap \sig') = 0$ for $\sig, \sig' \in \Ee$ unless $\sig' = \sig$.
For all $K \in \Tt$, we assume that 
there exists a subset $\Ee_K$ of $\Ee$ such that $\p K =  \bigcup_{\sig \in \Ee_K} \sig$. 
Moreover, we suppose that $\bigcup_{K\in\Tt} \Ee_K = \Ee$.
Given two distinct control volumes $K,L\in\Tt$, the intersection $\ov K \cap \ov L$ either reduces to a single face
$\sig  \in \Ee$ denoted by $K|L$, or its $(d-1)$-dimensional Hausdorff measure is $0$. 
\item The cell-centers $(x_K)_{K\in\Tt}$ are two by two distinct points of $\O$.  If $K, L \in \Tt$ 
share a face $K|L$, then the vector $x_L-x_K$ is orthogonal to $K|L$ and oriented from $K$ to $L$.
\item For the boundary faces $\sig \subset \p\O$, we assume  that 
there exists $x_\sig \in \sig$ such that $x_\sig - x_K$ is orthogonal to $\sig$.

\end{enumerate} 
\end{Def}
In the above definition, we do not suppose that $x_K$ belongs to $K$. We allow for more general grids, like for instance 
Delaunay triangulation or Laguerre cells. The condition on the fact that the $x_K$ are two-by-two distinct is not restrictive: if two cell centers 
$x_K$ and $x_L$ coincide, one just has to merge the two cells $K$ and $L$ and to remove $K|L$ from $\Ee$.

We denote by $m_K$ the $d$-dimensional Lebesgue measure of the control volume $K$.
The set of the faces is partitioned into two subsets: the set $\Ee_{\rm int}$ of the interior faces defined by 
\[
\Ee_{\rm int}= \left\{ \sig \in \Ee\; \middle| \; \sig = K|L\; \text{for some}\; K,L \in \Tt\right\}, 
\]
and the set $\Ee_{\rm ext}$ of the exterior faces defined by 
\[
\Ee_{\rm ext}= \left\{ \sig \in \Ee\; \middle| \; \sig \subset \p\O\right\}.
\]
For a given control volume $K\in\Tt$, we also define $\Ee_{K,{\rm int}} = \Ee_K \cap \Ee_\text{int}$ and $\Ee_{K,{\rm ext}} = \Ee_K \cap \Ee_\text{ext}$ the sets of its internal and external faces. 
We may write $\sigma=K|L$ to signify that $\sigma\in \Ee_{K,{\rm int}}$. For such internal edges $\sig = K|L$, we denote by $x_\sig$ the intersection between $[x_K,x_L]$ 
and the hyperplane containing $\sig$. Note that $x_\sig$ does not necessarily belong to $\sigma$.

In what follows, we denote by 
\[
d_\sig = \begin{cases}
|x_K - x_L| & \text{if}\; \sig = K|L \in \Ee_{\rm int}, \\
|x_K - x_\sig| & \text{if}\; \sig \in \Ee_{\rm ext}, 
\end{cases}
\qquad 
a_\sig = \frac{m_\sig}{d_\sig}, \qquad \sig \in \Ee.
\]
We also define the signed distance $d_{K\sig}$ between $x_K$ and $\sig \in \Ee_K$ thanks to the relation
\[
d_{K\sig} \nu_{K\sig} = x_\sig- x_K, \quad \sig \in \Ee_K, \, K\in\Tt, 
\]
where $\nu_{K\sig}$ stands for the normal to $\sig$ outward w.r.t. $K$.
Even though $d_{K\sig}$ can take negative values for interior faces, one still has 
\[
d_{K\sig} + d_{L\sig} = d_\sig >0 \quad \text{for}\; \sig = K|L \in \Ee_\text{int}, 
\]
as well as the geometric relation 
\[
m_K = \frac1d\sum_{\sig \in \Ee_K} m_\sig d_{K\sig}, \qquad K \in \Tt. 
\]
We further introduce the \emph{size} $\delta_\Tt$ and the \emph{regularity factor} $\zeta_\Tt$ 
of the mesh:
\be\label{eq:mesh.reg}
\delta_\Tt = \max_{K\in\Tt} \, \text{diam}(K), \qquad \zeta_\Tt = \max_{K\in\Tt} \max_{\sig \in \Ee_K} \left(\frac{\text{diam}(K)}{d_\sig} + \frac{d_\sig}{\text{diam}(K)}\right).
\ee
Given $\bu = \left(\left(u_K\right)_{K\in\Tt}, \left(u_\sig\right)_{\sig \in \Ee_\text{ext}}\right) \in \R^{\Tt \cup \Ee_\text{ext}}$, then for all $K\in\Tt$, we define the mirror value of $u_K$ w.r.t. $\sig \in \Ee_K$ by 
\be\label{eq:mirror}
u_{K\sig} = \begin{cases}
u_L & \text{if}\; \sig = K|L \in \Ee_\text{int}, \\
u_\sig & \text{if}\; \sig \in \Ee_\text{ext}.
\end{cases}
\ee

Concerning the time discretization, we consider for notation simplicity a uniform time stepping. 
More precisely, a time discretization is given by the choice of a time step $\tau>0$, from which we construct 
discrete times $t_n = n\tau$, $n\geq 0$. We stress that 
our study can be extended without any particular difficulty to the case of non-uniform time discretizations.

% ----------------------------------------------------------
\subsection{The SQRA finite volume scheme}\label{ssec:scheme}

Given an admissible discretization $(\Tt, \Ee, (x_K)_{K\in\Tt})$ of $\O$ and a time step $\tau$, 
let us detail the scheme to be studied in this paper. 
First, the initial data $\rho^0$ is discretized into $\brho^0 = \left(\rho_K^0\right)_{K\in\Tt} \in [0,1]^\Tt$ 
by setting 
\be\label{eq:brho0}
\rho_K^0 = \frac1{m_K} \int_K \rho^0, \qquad K \in \Tt.
\ee
The potential $\phi$ is discretized into $\bphi = \left(\left(\phi_K\right)_{K\in\Tt}; \left(\phi_\sig\right)_{\sig\in\Ee_\text{ext}}\right)$  by setting 
\be\label{eq:bphi}
\phi_K = \phi(x_K) \quad \text{and} \quad \phi_\sig = \phi(x_\sig), \qquad K\in\Tt, \; \sig \in \Ee_\text{ext}.
\ee
As usual in the finite volume context, the conservation law \eqref{eq:cons.01} is discretized into 
\be\label{eq:cons.1}
\frac{\rho_K^n - \rho_K^{n-1}}{\tau} m_K + \sum_{\sig \in \Ee_K} m_\sig F_{K\sig}^n = 0, \qquad K \in \Tt, \; n \geq 1. 
\ee
The index $n$ for the numerical flux $F_{K\sig}^n$ across $\sig$ outward w.r.t. $K$ in~\eqref{eq:cons.1} indicates 
that our time discretization strategy relies on the backward Euler scheme. 
The bulk numerical fluxes are then defined by 
\begin{subequations}\label{eq:FKsig}
\be\label{eq:FKsig.int}
F_{K\sig}^n =  \frac1{d_\sig} \left[ \rho_K^n(1-\rho_L^n) e^{\frac12(\phi_K - \phi_L)} 
-  \rho_L^n(1-\rho_K^n) e^{\frac12(\phi_L - \phi_K)} \right] 
\quad \text{for}\; \sig = K|L \in \Ee_\text{int}. 
\ee
To preserve the second order accuracy in space, the boundary condition~\eqref{eq:boundary.1} is discretized by 
setting 
\be\label{eq:FKsig.ext}
F_{K\sig}^n = \frac1{d_\sig} \left[ \rho_K^n(1-\rho_\sig^n) e^{\frac12(\phi_K - \phi_\sig)} 
-  \rho_\sig^n(1-\rho_K^n) e^{\frac12(\phi_\sig - \phi_K)} \right] = \a_\sig \rho_\sig^n - \b_\sig, \quad \quad \text{for}\; \sig \in \Ee_\text{ext}, 
\ee
\end{subequations}
where, having set $\a_\sig = \a(x_\sig)$ and $\b_\sig = \b(x_\sig)$, 
\be\label{eq:rhosign}
\rho_\sig^n = \frac{d_\sig \b_\sig + \rho_K^n e^{\frac12(\phi_K-\phi_\sig)}}{d_\sig \a_\sig +  \rho_K^n e^{\frac12(\phi_K-\phi_\sig)} + (1-\rho_K^n)  e^{-\frac12(\phi_K-\phi_\sig)}}
\ee
is the unique value achieving the second equality in~\eqref{eq:FKsig.ext}. 
With a slight abuse of notation, we still denote by $\brho^n = \left(\left(\rho_K^n\right)_{K\in\Tt}, \left(\rho_\sig^n\right)_{\sig\in\Ee_\text{ext}}\right)$ the discrete density enriched with its boundary edge values prescribed by~\eqref{eq:rhosign}.

Formula~\eqref{eq:FKsig.int} can be interpreted as a Butler-Volmer law located 
at the interface between the cells $K$ and $L$. The probability that a particle jumps from $K$ to $L$ is 
proportional to the number $\rho_K^n$ of candidates in $K$ for a jump as well as to the number of 
available sites $(1-\rho_L^n)$ to host the particle in cell $L$. The drift $\phi_K - \phi_L$ appears in an exponential 
with balanced prefactors $1/2$, which is natural since $K$ and $L$ play symmetric roles in the formula. 
The scheme~\eqref{eq:cons.1}\&\eqref{eq:FKsig} is then a simple backward Euler discretisation of the dynamics 
prescribed by the infinitesimal generator of a weakly asymmetric simple exclusion process (WASEP), see~\cite{KL99}. 

Assume now that $\brho^n \in (0,1)^{\Tt\cup\Ee_\text{ext}}$ (this will be rigorously established later on, see Lemma~\ref{lem:Linf}). 
The consistency of formula~\eqref{eq:FKsig.int} with~\eqref{eq:cons.02} follows from the identity 
\be\label{eq:FKsig.int.2}
F_{K\sig}^n = \frac 2{d_\sig} \eta_\sig^n  \sinh\left(\frac{\xi^n_K-\xi^n_{K\sig}}2\right), 
\ee
with $ \xi_K^n = h'(\rho_K^n) + \phi_K$ for $K\in\Tt$, $\xi_\sig^n = h'(\rho_\sig^n) + \phi_\sig$ for $\sig \in \Ee_\text{ext}$, and where $\xi_{K\sig}^n$ is the mirror value of $\xi_K^n$ in the sense of~\eqref{eq:mirror}.
Moreover, we have set 
\be\label{eq:etasig}
\eta_\sig^n = \sqrt{\rho_K^n (1-\rho_K^n)\rho_{K\sig}^n(1-\rho_{K\sig}^n)} = \sqrt{\eta(\rho_K^n) \eta(\rho_{K\sig}^n)}, \qquad 
\ee
Taylor expanding formula~\eqref{eq:FKsig.int.2}, one gets that for each $n\geq 1$ and $\sig \in \Ee$, 
there exists $r_\sig^n \in (0,1)$ such that 
\begin{multline*}
F_{K\sig}^n = \frac{ \eta_\sig^n}{d_\sig}\Bigg( 2 \sinh\left(\frac{h'(\rho_K^n)-h'(\rho_{K\sig}^n)}2\right) 
+ (\phi_K - \phi_{K\sig}) \cosh\left(\frac{h'(\rho_K^n)-h'(\rho_{K\sig}^n)}2\right) \\
 + \frac{(\phi_K - \phi_{K\sig})^2}4  \sinh\left(\frac{h'(\rho_K^n)-h'(\rho_{K\sig}^n) + r_\sig^n(\phi_K-\phi_{K\sig})}2\right) 
\Bigg).
\end{multline*}
Then using the identities 
\begin{align}
\label{eq:id.sinh}
 \eta_\sig^n \sinh\left(\frac{h'(\rho_K^n)-h'(\rho_{K\sig}^n)}2\right)  =&\; \frac{\rho_K^n - \rho_{K\sig}^n }2, \\
 \label{eq:id.cosh}
 \eta_\sig^n \cosh\left(\frac{h'(\rho_K^n)-h'(\rho_{K\sig}^n)}2\right)  =&\; \frac{\eta(\rho_K^n) + \eta(\rho_{K\sig}^n)}2 
 + \frac{(\rho_K^n - \rho_{K\sig}^n)^2}2, 
\end{align}
and $\sinh(a+b) = \sinh(a) \cosh(b) + \sinh(b) \cosh(a)$, we get that 
\be\label{eq:FKsig.int.3}
F_{K\sig}^n = \frac{\rho_K^n - \rho_{K\sig}^n}{d_\sig} + \frac{\eta(\rho_K^n) + \eta(\rho_{K\sig}^n)}2  \frac{\phi_K - \phi_{K\sig}}{d_\sig}
+ R_\sig^n, 
\ee
with 
\begin{multline}\label{eq:Rsign}
R_\sig^n =  \frac{\phi_K - \phi_{K\sig}}{2 d_\sig} \left(\rho_K^n - \rho_{K\sig}^n\right)^2 
+ \frac{(\phi_K - \phi_{K\sig})^2}8 
\frac{\rho^n_K - \rho_{K\sig}^n}{d_\sig} \cosh\left(\frac{r_\sig^n(\phi_K-\phi_{K\sig})}2\right) \\+
 \frac{(\phi_K - \phi_{K\sig})^2}{8d_\sig}  \left(\eta(\rho_K^n) + \eta(\rho_{K\sig}^n) 
 + (\rho_K^n - \rho_{K\sig}^n)^2\right) \sinh\left(\frac{r_\sig^n(\phi_K-\phi_{K\sig})}2\right). 
\end{multline}
Let $\ov \rho: \R_+ \times \ov \O \to (0,1)$ be a smooth (say $C^{0,1}$ in time and $C^{1,1}$ in space) function, then for all $n\geq 1$, 
define $\ov \rho_K^n = \ov \rho(t_n, x_K)$,  $K\in\Tt$, and 
\[
\ov F_{K\sig}^n =  \frac1{d_\sig} \left[ \ov \rho_K^n(1-\ov \rho_L^n) e^{\frac12(\phi_K - \phi_L)}
-  \ov \rho_L^n(1-\ov \rho_K^n) e^{\frac12(\phi_L - \phi_K)}
\right], \qquad \sig = K|L \in \Ee_\text{int}.
\]
In the case of a uniform cartesian grid, where $x_\sig = \frac{x_K + x_L}2$ is the center of mass of $\sig$, 
it results from the expression~\eqref{eq:FKsig.int.3} of the flux that 
\[
\ov F_{K\sig}^n =  \frac1{m_\sig} \int_\sig \ov F(t_n) \cdot \nu_{K\sig} + \Oo(d_\sig^2),
\]
where $\ov F = - \grad \ov \rho - \eta(\ov \rho) \grad \phi$ is the flux corresponding to $\ov \rho$. The SQRA scheme, 
which owes its name to the choice~\eqref{eq:etasig} of a geometric mean for the edge mobilities $\eta_\sig^n$
and to the fact that it extends to the nonlinear mobility setting the linear SQRA scheme~\cite{LFW13, Heida18}, is then 
expected to be second order accurate w.r.t. space and first order accurate w.r.t. time since it relies on the backward Euler scheme. This will be confirmed by the numerical results exhibited in Section~\ref{sec:numerics}.

{\magenta
\begin{rem}\label{rem:equilibrium}
For general coefficient $\alpha$ and $\beta$ in \eqref{eq:boundary.1}, the system~\eqref{eq:cons} does not admit any thermal equilibrium, in 
the sense that there exists no steady profile $\rho^\infty$ such that $F \equiv 0$ in $\Omega$. Such a thermal equilibrium exists 
if and only if there exists some positive function $\lambda: \G \to (0,+\infty)$ and some constant $z\in \R$ such that 
\[
\alpha = \lambda(1 + e^{-\phi + z}), \qquad \beta = \lambda e^{-\phi + z} \quad \text{on} \; \G. 
\]
Then one readily checks that 
\[
\rho^\infty = \frac{e^{-\phi + z}}{1 + e^{-\phi + z}}
\]
is a thermal equilibrium corresponding to a constant electrochemical potential $\xi^\infty \equiv z$. 

Define now $\brho^\infty = \left(\rho_K^\infty\right)_{K\in\Tt}$ by setting 
\be\label{eq:rhoKinf}
\rho_K^\infty =  \frac{e^{-\phi_K + z}}{1 + e^{-\phi_K + z}}, \qquad K\in\Tt, 
\ee
then $\xi_K^\infty = z$ for all $K\in\Tt$. Owing to~\eqref{eq:FKsig.int.2}, the inner numerical fluxes all vanish. 
Moreover, 
\[
\rho_\sig^\infty =\frac{ \beta_\sigma }{ \alpha_\sigma} = \frac{e^{-\phi_\sigma + z}}{1 + e^{-\phi_\sigma + z}}, \qquad \sig \in \Ee_\text{ext},
\]
allows to solve the second equality in~\eqref{eq:FKsig.ext}, and the corresponding boundary fluxes $F_{K\sig}^\infty = 0$ for all $\sig \in \Ee_\text{ext}$.
In other words, $\brho^\infty$ given by~\eqref{eq:rhoKinf} is a discrete thermal equilibrium, and the scheme~\eqref{eq:bphi}--\eqref{eq:rhosign}
is well-balanced. 
\end{rem}
}
% ----------------------------------------------------------,
\subsection{Our main results and organisation of the paper}

Even though finer results can be found in the Sections \ref{sec:fixed} and \ref{sec:convergence} devoted to their proofs, 
we state here simple presentations of our main results. 
The first one, namely Theorem~\ref{thm:fixed}, is related to the characteristics of the scheme given a fixed mesh 
$(\Tt, \Ee, \left(x_K\right)_{K\in\Tt})$ and time step $\tau$. We show in particular that 
the scheme is well posed, preserves the $L^\infty$ bounds and is free-energy diminishing, in the sense that the discrete solution satisfies a discrete counterpart 
of Proposition~\ref{prop:NRG}. Then Theorem~\ref{thm:conv} states the convergence 
of the approximate solution provided by the scheme~\eqref{eq:brho0}--\eqref{eq:FKsig} towards the weak solution 
to~\eqref{eq:cons}--\eqref{eq:init} as the size of the mesh $\delta_\Tt$ and the time step $\tau$ tend to $0$.
The convergence analysis strongly relies on the energy stability of the scheme, and more precisely on
the quantification of the free energy dissipation. 

Given $\brho^n = \left(\rho_K^n\right)_{K\in\Tt} \in [0,1]^\Tt$, then we define 
\be\label{eq:FTt}
\Ff_\Tt(\brho^n) = \sum_{K\in\Tt} m_K \left(h(\rho_K^n) + \phi_K \rho_K^n \right), \qquad 
\Ff_{\Tt, \text{tot}}^n = \Ff_\Tt(\brho^n) + \sum_{p\geq 1} \tau \sum_{\sig \in \Ee_\text{ext}} m_\sig \xi^\G_\sig F_{K\sig}^p, 
\ee
where the external fluxes $F_{K\sig}^p$ are related to $\brho^p$ through formula~\eqref{eq:FKsig.ext}, 
and where, consistently with~\eqref{eq:xiG}, we have set 
\be\label{eq:xiGsig}
\xi_\sig^\G = \phi_\sig - \log\left(\frac{\a_\sig}{\b_\sig}-1\right), \qquad \sig \in \Ee_\text{ext}.
\ee
Initially, both energies coincide: 
 $\Ff_\Tt(\brho^0) = \Ff^0_{\Tt,\text{tot}}$, and is follows from Jensen's inequality and from the regularity of $\phi$ that
 \be\label{eq:Ff.init}
\Ff_\Tt(\brho^0) \leq  \Ff(\rho^0) + 2 \|\grad \phi \|_\infty \delta_\Tt m_\O.
 \ee
 In particular, $\Ff_\Tt(\brho^0)$ is bounded uniformly w.r.t. $\delta_\Tt$ owing to~\eqref{eq:Fftot.decay} and to $\delta_\Tt \leq \text{diam}(\O)$.
\begin{thm}\label{thm:fixed}
Given $\brho^{n-1} \in [0,1]^\Tt$, there exists a unique solution $\brho^{n} \in (0,1)^{\Tt\cup\Ee_\text{ext}}$ 
to the non\-linear system corresponding to the scheme~\eqref{eq:brho0}--\eqref{eq:rhosign}. Moreover, 
\be\label{eq:NRG.tot.Tt}
\Ff_{\Tt, \text{tot}}^{n-1} \geq \Ff_{\Tt, \text{tot}}^{n} \geq -  \cter{cte:Ftot} t_n, 
\qquad  n \geq 1,
\ee
with $\cter{cte:Ftot}$ as in Proposition~\ref{prop:NRG}.
\end{thm}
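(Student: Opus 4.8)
The plan is to prove the three assertions of the statement separately: well-posedness of the nonlinear system with $\brho^n\in(0,1)^{\Tt\cup\Ee_\text{ext}}$, the decay $\Ff^n_{\Tt,\text{tot}}\le\Ff^{n-1}_{\Tt,\text{tot}}$, and the lower bound $\Ff^n_{\Tt,\text{tot}}\ge-\cter{cte:Ftot}\,t_n$. For \emph{existence}, the only real difficulty is the logarithmic singularity of $h'$ at $0$ and $1$; apart from that the system is finite-dimensional and explicit, once one notices that the boundary values $\rho_\sig^n$, $\sig\in\Ee_\text{ext}$, are given by \eqref{eq:rhosign} as continuous functions of the adjacent cell value that automatically belong to $[0,1)$, so that every numerical flux in \eqref{eq:FKsig} is a continuous (rational) function of $(\rho_K^n)_{K\in\Tt}$. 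I would apply Brouwer's theorem to the truncated map $T:[0,1]^\Tt\to[0,1]^\Tt$, $T(\brho)_K=\mathrm{proj}_{[0,1]}\bigl(\rho_K^{n-1}-\frac{\tau}{m_K}\sum_{\sig\in\Ee_K}m_\sig F_{K\sig}(\brho)\bigr)$, and then show that at a fixed point $\brho^\star$ no component can equal $0$ or $1$: if $\rho^\star_{K_0}=1=\max_K\rho^\star_K$, the fixed-point relation forces $\sum_{\sig\in\Ee_{K_0}}m_\sig F_{K_0\sig}(\brho^\star)\le 0$, while the elementary sign computations $F_{K_0\sig}(\brho^\star)\ge 0$ on interior faces (since $\rho^\star_{K_0}=\max$) and $F_{K_0\sig}(\brho^\star)>0$ on exterior faces — the latter using $0<\b_\sig<\a_\sig$ through \eqref{eq:rhosign} — yield $\sum\ge 0$, strictly as soon as $K_0$ carries an exterior face; propagating the ensuing equalities along the connected mesh then contradicts $\p\O\neq\emptyset$, and one argues symmetrically at the minimal value. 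Hence $\brho^\star\in(0,1)^\Tt$ solves \eqref{eq:cons.1}--\eqref{eq:rhosign}; this is precisely the content of the $L^\infty$ bound of Lemma~\ref{lem:Linf}, which I would isolate and prove beforehand. (A topological degree/homotopy argument is an equivalent route; in both cases the delicate point is confining any approximate solution away from $\rho\in\{0,1\}$.)

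For \emph{uniqueness} I would exploit the monotonicity built into the scheme at the level of its Jacobian. Writing the system as $\mathcal A_K(\brho^n)=\frac{m_K}{\tau}\rho_K^{n-1}$ with $\mathcal A_K(\brho)=\frac{m_K}{\tau}\rho_K+\sum_{\sig\in\Ee_K}m_\sig F_{K\sig}(\brho)$, differentiating \eqref{eq:FKsig}--\eqref{eq:rhosign} gives $\partial F_{K\sig}/\partial\rho_K>0$, $\partial F_{K\sig}/\partial\rho_L<0$ for $\sig=K|L$, and (again using $0<\b_\sig<\a_\sig$) strict monotonicity of the exterior reconstruction, whence $\partial F_{K\sig}/\partial\rho_K>0$ for $\sig\in\Ee_\text{ext}$ too. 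Consequently $J:=D\mathcal A$ has diagonal entries $\ge m_K/\tau>0$, nonpositive off-diagonal entries, and — because the interior flux contributions cancel columnwise via $F_{L\sig}=-F_{K\sig}$ — strictly positive column sums $\sum_K J_{KL}=\frac{m_L}{\tau}+\sum_{\sig\in\Ee_{L,\text{ext}}}m_\sig\,\partial F_{L\sig}/\partial\rho_L>\frac{m_L}{\tau}$. Such a matrix is a nonsingular $M$-matrix (its transpose is strictly diagonally dominant with positive diagonal), and so is $\bar J:=\int_0^1 J\bigl(\hat\brho^n+t(\brho^n-\hat\brho^n)\bigr)\,dt$ for two hypothetical solutions $\brho^n,\hat\brho^n$; from $\bar J(\brho^n-\hat\brho^n)=\mathcal A(\brho^n)-\mathcal A(\hat\brho^n)=0$ and the invertibility of $\bar J$ one gets $\brho^n=\hat\brho^n$.

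For the \emph{energy decay} I would mimic Proposition~\ref{prop:NRG} at the discrete level. By \eqref{eq:FTt}, $\Ff^n_{\Tt,\text{tot}}$ and $\Ff^{n-1}_{\Tt,\text{tot}}$ differ by $\Ff_\Tt(\brho^n)-\Ff_\Tt(\brho^{n-1})+\tau\sum_{\sig\in\Ee_\text{ext}}m_\sig\xi_\sig^\G F_{K\sig}^n$, and convexity of $h$ yields $\Ff_\Tt(\brho^n)-\Ff_\Tt(\brho^{n-1})\le\sum_K m_K\xi_K^n(\rho_K^n-\rho_K^{n-1})$ with $\xi_K^n=h'(\rho_K^n)+\phi_K$. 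Substituting \eqref{eq:cons.1} and reorganizing the double sum face by face (discrete integration by parts, using $F_{L\sig}^n=-F_{K\sig}^n$ on interior faces and the mirror convention on exterior ones) gives
\[
\Ff^n_{\Tt,\text{tot}}-\Ff^{n-1}_{\Tt,\text{tot}}\le-\tau\!\!\!\sum_{\sig=K|L\in\Ee_\text{int}}\!\!\! m_\sig(\xi_K^n-\xi_L^n)F_{K\sig}^n\;-\;\tau\!\!\sum_{\sig\in\Ee_\text{ext}}\!\! m_\sig(\xi_K^n-\xi_\sig^\G)F_{K\sig}^n.
\]
Each interior term is $\ge 0$ by the representation \eqref{eq:FKsig.int.2}, since $x\sinh(x/2)\ge0$ and $\eta_\sig^n\ge0$. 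For an exterior face, \eqref{eq:FKsig.int.2} gives $\operatorname{sign}(F_{K\sig}^n)=\operatorname{sign}(\xi_K^n-\xi_\sig^n)$ with $\xi_\sig^n=h'(\rho_\sig^n)+\phi_\sig$, while reading the Butler--Volmer identity \eqref{eq:boundary.2} at $x_\sig$ for the value $\rho_\sig^n$ — legitimate precisely because $\rho_\sig^n$ solves \eqref{eq:FKsig.ext} — gives $F_{K\sig}^n=\a_\sig\rho_\sig^n-\b_\sig=2\k_\sig\sqrt{\eta(\rho_\sig^n)}\sinh\bigl(\tfrac12(\xi_\sig^n-\xi_\sig^\G)\bigr)$ with $\k_\sig=\sqrt{\b_\sig(\a_\sig-\b_\sig)}$, so $\operatorname{sign}(F_{K\sig}^n)=\operatorname{sign}(\xi_\sig^n-\xi_\sig^\G)$ as well; hence $\xi_\sig^n$ lies between $\xi_K^n$ and $\xi_\sig^\G$, so $(\xi_K^n-\xi_\sig^\G)F_{K\sig}^n\ge0$ and the decay follows.

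Finally, for the lower bound, $\Ff_\Tt(\brho^n)=\sum_K m_K\bigl(h(\rho_K^n)+\phi_K\rho_K^n\bigr)\ge0$ since $h\ge0$, $\phi\ge0$ and $\rho_K^n\in(0,1)$, whence by \eqref{eq:FTt}
\[
\Ff^n_{\Tt,\text{tot}}\ge\sum_{p=1}^n\tau\sum_{\sig\in\Ee_\text{ext}}m_\sig\xi_\sig^\G F_{K\sig}^p\ge-t_n\,\|\xi^\G\|_{L^\infty(\G)}\bigl(\|\a\|_{L^\infty(\G)}+\|\b\|_{L^\infty(\G)}\bigr)\Hh^{d-1}(\G),
\]
using $|F_{K\sig}^p|=|\a_\sig\rho_\sig^p-\b_\sig|\le\|\a\|_\infty+\|\b\|_\infty$ (as $\rho_\sig^p\in[0,1]$), $\sum_{\sig\in\Ee_\text{ext}}m_\sig=\Hh^{d-1}(\G)$, and the bound on $\|\xi^\G\|_\infty$ from \eqref{eq:xiGsig} together with the regularity of $\phi,\a,\b$; this is exactly $-\cter{cte:Ftot}\,t_n$ with the constant of Proposition~\ref{prop:NRG}. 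I expect the existence part to be the main obstacle: the uniqueness and energy statements are essentially algebraic consequences of the conservative, monotone, $\sinh$-shaped structure of the fluxes, whereas existence rests entirely on the uniform $L^\infty$ control of Lemma~\ref{lem:Linf}, where $0<\b_\sig<\a_\sig$ and the connectedness of $\Tt$ are the crucial inputs.
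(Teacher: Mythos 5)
Your proposal is correct, and it reaches the three assertions by partially different means than the paper, so a short comparison is in order. For existence, the paper (Lemma~\ref{lem:Linf} and Proposition~\ref{prop:well-posed}) modifies the fluxes with positive parts and runs a topological degree/homotopy argument in the time-step parameter $s\in[0,\tau]$, whereas you clamp the unknowns through a projection onto $[0,1]^\Tt$ and invoke Brouwer; both hinge on exactly the same maximum-principle confinement (signs of the fluxes at an extremal cell, strict positivity of the boundary flux via $0<\b_\sig<\a_\sig$, propagation through the connected mesh to a boundary cell), so this is a cosmetic variant. The genuinely different step is uniqueness: the paper exploits the order structure of the map $\bHh^n$ (monotone in each variable) through the $\wedge/\vee$ sub/supersolution trick, which after summation and conservativity yields an $L^1$-contraction-type inequality; you instead differentiate the scheme, check the sign pattern of the Jacobian (including $\partial\rho_\sig/\partial\rho_K>0$ from \eqref{eq:rhosign}) and conclude via strict diagonal dominance of the transpose of the mean-value Jacobian, i.e.\ a nonsingular $M$-matrix argument. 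Your route requires smoothness of the fluxes and convexity of the segment between two solutions (both available here), while the paper's order argument is derivative-free and gives the stronger $L^1$-stability estimate as a by-product; note also that your claimed column sum is only $\geq m_L/\tau$ (with equality for cells without boundary faces), which is all you need. For the energy inequality your computation coincides with the paper's (convexity of $h$, discrete summation by parts, the $\sinh$ representations \eqref{eq:FKsig.int.2} and the boundary Butler--Volmer identity fixing the signs of the interior and exterior terms), except that you discard the quantitative dissipation that Proposition~\ref{prop:NRG.disc} keeps through the potentials $\Dd_\Ee$ and $\Dd_\Ee^*$; this is enough for \eqref{eq:NRG.tot.Tt} but the quantified version is what the paper later needs for compactness, so your proof of the theorem is complete while being less reusable. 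The lower bound argument matches Corollary~\ref{coro:NRG.disc} (uniform bounds on $\xi^\G_\sig$ and on $F_{K\sig}^p=\a_\sig\rho_\sig^p-\b_\sig$), with a constant depending on the same data $\G,\a,\b,\phi$ as in Proposition~\ref{prop:NRG}.
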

Theorem~\ref{thm:fixed} is a partial presentation of the results established in Section~\ref{sec:fixed}. 
Interested readers can find there some precise quantification of the dissipated total free energy we do not mention here to keep 
the presentation simple. 

Once Theorem~\ref{thm:fixed} and an iterated in time discrete solution $\left(\brho^n\right)_{n\geq 0}$ on hand, 
one can construct a piecewise constant in time and space reconstruction $\rho_{\Tt,\tau}$ by setting 
\be\label{eq:rhoTttau}
\rho_{\Tt,\tau}(t,x) = \rho_K^n \quad \text{if}\; (t,x) \in (t_{n-1},t_n] \times K,\; n \geq 1,  \qquad \rho_{\Tt,\tau}(0,x) = \rho_K^0 \quad \text{if}\; x \in K.
\ee
Now, let $\left(\Tt_m, \Ee_m, \left(x_K\right)_{K\in\Tt_m}\right)_{m\geq 0}$ and $\left(\tau_m\right)_{m\geq 0}$ be respectively a sequence of admissible meshes 
in the sense of Definition~\ref{def:mesh} and a sequence of time steps such that 
\be\label{eq:mtoinfty}
\lim_{m\to\oo} \delta_{\Tt_m} = \lim_{m\to\oo} \tau_m =0 \quad \text{and} \quad \zeta_{\Tt_m} \leq \zeta_\star <+\infty, \qquad m \geq 0, \
\ee
then the corresponding sequence of approximate solutions $\left(\rho_{\Tt_m,\tau_m}\right)_{m\geq 1}$ is bounded in $L^\oo(\R_+ \times \O)$ 
owing to Theorem~\ref{thm:fixed}. Therefore, there exists $\rho \in L^\oo(\R_+ \times \O)$ with $0 \leq \rho \leq 1$ such that, up to a subsequence, 
\be\label{eq:rho}
\rho_{\Tt_m,\tau_m} \underset{m\to\oo}\longrightarrow \rho \quad \text{in the $L^\oo(\R_+ \times \O)$ weak-$\star$ sense.}
\ee
The following theorem claim that $\rho$ is the unique weak solution to the continuous problem~\eqref{eq:cons}--\eqref{eq:init}, and 
that the convergence holds in a stronger sense. 
\begin{thm}\label{thm:conv}
Let $\rho$ be as in~\eqref{eq:rho}, then $\rho$ is the unique weak solution to \eqref{eq:cons}--\eqref{eq:init} in the sense of Definition~\ref{def:weak}. 
Moreover, the whole sequence $\left(\rho_{\Tt_m,\tau_m}\right)_{m\geq 0}$ converges strongly in $L^p_\text{loc}(\R_+ \times \ov \O)$ for any $p\in [1,+\infty)$.
\end{thm}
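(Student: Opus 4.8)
The plan is to follow the by-now classical compactness strategy for finite volume schemes à la \cite{EGH00}, the novelty being that the main a priori estimate comes from the discrete free energy/dissipation balance of Theorem \ref{thm:fixed} (in its refined form from Section \ref{sec:fixed}) rather than from an $L^2(H^1)$ estimate obtained by testing against the solution itself. First, I would extract from the quantified dissipation inequality a discrete $L^2_\text{loc}(\R_+;H^1(\O))$-type bound on $\rho_{\Tt_m,\tau_m}$: from \eqref{eq:NRG.tot.Tt} and the fine version of Section \ref{sec:fixed} one controls $\sum_{n}\tau\sum_\sigma a_\sigma\,\eta_\sigma^n\big|\xi_K^n-\xi_{K\sigma}^n\big|^2$ (the cosh-type dissipation), and using the identity \eqref{eq:id.sinh} together with the elementary inequality $|x-y|^2\le C\,\sqrt{\eta(x)\eta(y)}\,|h'(x)-h'(y)|^2$ for $x,y\in[0,1]$ (the key scalar estimate, valid because $\sinh$ is superlinear) one converts this into a uniform bound on $\sum_n\tau\sum_{\sigma=K|L}a_\sigma(\rho_K^n-\rho_L^n)^2$ over any finite time interval. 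Likewise one gets a discrete trace control of $\sum_n\tau\sum_{\sigma\in\Ee_\text{ext}}a_\sigma(\rho_K^n-\rho_\sigma^n)^2$.

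Second, armed with this discrete gradient bound I would apply the discrete space translation estimates and the Kolmogorov-type compactness criterion of \cite{EGH00} (or the more recent discrete functional-analytic framework) to obtain relative compactness of $(\rho_{\Tt_m,\tau_m})_m$ in $L^2_\text{loc}(\R_+\times\ov\O)$; time compactness is obtained from the scheme \eqref{eq:cons.1}, which bounds the discrete time derivative in a dual norm since the numerical fluxes are bounded in the appropriate discrete $L^2$ space by the estimate just derived (the remainder term $R_\sigma^n$ in \eqref{eq:Rsign} is harmless because $|\phi_K-\phi_{K\sigma}|\le\|\grad\phi\|_\infty d_\sigma$). Together with the weak-$\star$ convergence \eqref{eq:rho} this upgrades the convergence to strong $L^2_\text{loc}$, hence, using $0\le\rho_{\Tt_m,\tau_m}\le1$ and dominated convergence, to strong $L^p_\text{loc}(\R_+\times\ov\O)$ for all finite $p$; the limit $\rho$ belongs to $L^2_\text{loc}(\R_+;H^1(\O))$ by weak lower semicontinuity of the discrete Dirichlet energy, and its trace inherits the stated regularity and bounds.

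Third, I would pass to the limit in the weak formulation: plug a fixed test function $\varphi\in C^\infty_c([0,T)\times\ov\O)$, multiply \eqref{eq:cons.1} by $\tau\varphi(t_{n-1},x_K)$, sum over $K$ and $n$, and perform discrete integration by parts. Using the expansion \eqref{eq:FKsig.int.3}, the flux term splits into a pure-diffusion part $\sum a_\sigma(\rho_K^n-\rho_{K\sigma}^n)(\varphi(x_K)-\varphi(x_{K\sigma}))$, a drift part involving $\tfrac12(\eta(\rho_K^n)+\eta(\rho_{K\sigma}^n))(\phi_K-\phi_{K\sigma})$, and the remainder $R_\sigma^n$ which is $O(\delta_\Tt)$ in the relevant norm and drops out. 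Consistency of the TPFA stencil on admissible meshes (orthogonality condition (iii)) identifies the diffusion part with $\iint\grad\rho\cdot\grad\varphi$ in the limit, the drift part with $\iint\eta(\rho)\grad\phi\cdot\grad\varphi$ (here strong $L^p$ convergence of $\rho$ and continuity of $\eta$ are used), the discrete time-derivative term with $-\iint\rho\,\p_t\varphi-\int\rho^0\varphi(0,\cdot)$, and the external flux term, via \eqref{eq:FKsig.ext} and the strong trace convergence, with $-\iint_{\R_+\times\G}(\a\gamma\rho+\b)\varphi$; thus \eqref{eq:weak} holds. Finally, uniqueness of the weak solution (which, as the authors note, requires a separate argument, presumably a Hilbertian duality/doubling-of-variables estimate exploiting the monotone structure of $\rho\mapsto-\grad\rho-\eta(\rho)\grad\phi$ together with the Lipschitz Robin boundary term) shows the limit is independent of the subsequence, so the whole sequence converges.

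The main obstacle I anticipate is the first step, namely squeezing a genuine discrete $H^1$ estimate out of the cosh/$\sinh$-type dissipation: one must establish the scalar comparison between $|\rho_K-\rho_L|^2$ and $\eta_\sigma|h'(\rho_K)-h'(\rho_L)|^2$ uniformly up to the degeneracy $\rho\in\{0,1\}$, and simultaneously keep the drift-induced remainder $R_\sigma^n$ and the boundary dissipation under control, so that the constants do not blow up as $\delta_{\Tt_m}\to0$; once that estimate is in hand, the compactness and the limit passage are routine.
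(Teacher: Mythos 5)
Your overall architecture (dissipation estimate $\Rightarrow$ discrete $L^2_\text{loc}(H^1)$ bound $\Rightarrow$ Aubin--Simon/Kolmogorov compactness $\Rightarrow$ limit passage via the flux expansion \eqref{eq:FKsig.int.3} $\Rightarrow$ uniqueness by $H^{-1}$ duality and Gronwall) is the paper's, but the step you yourself single out as the main obstacle is a genuine gap: the scalar inequality $|x-y|^2\le C\,\sqrt{\eta(x)\eta(y)}\,|h'(x)-h'(y)|^2$ on $[0,1]^2$ is false. Take $x\to1$ and $y=\tfrac12$: the left-hand side tends to $\tfrac14$, while $\sqrt{\eta(x)\eta(y)}\sim\tfrac12\sqrt{1-x}$ and $|h'(x)-h'(y)|\sim|\log(1-x)|$, so the right-hand side tends to $0$. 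More generally, any argument that first replaces $\Psi^*(G_{K\sig}^n)=4\big(\cosh(G_{K\sig}^n/2)-1\big)$ by its quadratic minorant and then tries to compare $\eta_\sig^n\,|\xi_K^n-\xi_{K\sig}^n|^2$ with $(\rho_K^n-\rho_{K\sig}^n)^2$ throws away exactly the exponential growth of the cosh that compensates the degeneracy of the weight $\eta_\sig^n$ near $\rho\in\{0,1\}$; no mesh-uniform constant can be recovered this way.

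The paper's Lemma~\ref{lem:dissip*} avoids any such comparison by keeping the full cosh and exploiting exact algebra specific to $\eta(\rho)=\rho(1-\rho)$: writing $G_{K\sig}^n$ as the sum of $\tfrac12(\phi_K-\phi_{K\sig})$ and $\tfrac12\big(h'(\rho_K^n)-h'(\rho_{K\sig}^n)\big)$ and using the addition formula for $\cosh$ together with the identities \eqref{eq:id.sinh}--\eqref{eq:id.cosh}, one has $\eta_\sig^n\cosh\big(\tfrac12(h'(\rho_K^n)-h'(\rho_{K\sig}^n))\big)=\tfrac12\big(\eta(\rho_K^n)+\eta(\rho_{K\sig}^n)\big)+\tfrac12(\rho_K^n-\rho_{K\sig}^n)^2$, so the degenerate factor cancels exactly; with $\cosh\ge1$ and the arithmetic--geometric inequality this yields the term $2(\rho_K^n-\rho_{K\sig}^n)^2$, while the drift part, equal to $2(\rho_K^n-\rho_{K\sig}^n)\sinh\big(\tfrac12(\phi_K-\phi_{K\sig})\big)$ by \eqref{eq:id.sinh}, is absorbed by Young's inequality at a summable cost $O(d_\sig^2)$ per edge. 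Once this is repaired, the rest of your plan essentially matches the paper: the time-compactness estimate is obtained there (Lemma~\ref{lem:dissip}) from the primal dissipation via Young--Fenchel rather than from an $L^2$ bound on the fluxes, but your variant would also work once the discrete $H^1$ bound is in hand, and the compactness is then the black-box discrete Aubin--Simon theorem of \cite{ACM17}. Be aware, though, that the strong convergence of the traces that you invoke in the Robin term is not a by-product of interior compactness: it requires the separate argument of Lemmas~\ref{lem:trace} and~\ref{lem:trace.2} (following \cite{BCH13}), which uses precisely the boundary-edge part of the estimate of Lemma~\ref{lem:dissip*}, together with the comparison between the cell trace and the edge values $\rho_\sig^n$. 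Your uniqueness sketch ($H^{-1}$ duality plus Gronwall) is indeed what the paper does in Proposition~\ref{prop:unique}.
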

Proving Theorem~\ref{thm:conv} is the purpose of Section~\ref{sec:convergence}. The proof is based on compactness arguments 
that build on some refined version of the discrete energy estimate~\eqref{eq:NRG.tot.Tt}. Numerical evidences 
of the convergence will then be provided in Section~\ref{sec:numerics}. 
% ================================================================
%					Analysis at fixed grid
% ================================================================
\section{Numerical analysis at fixed grid}\label{sec:fixed}

The goal of this section is twofold. First one aims at establishing Theorem~\ref{thm:fixed}. Second, one derives 
enough estimates to carry out the convergence analysis in Section~\ref{sec:convergence}. 

\subsection{Existence and uniqueness of the discrete solution}\label{ssec:well-posed}

We are interested in solutions $\brho^n$ to the scheme~\eqref{eq:cons.1}--\eqref{eq:rhosign} that 
are bounded between $0$ and $1$. Therefore, changing the definition~\eqref{eq:FKsig.int} of the internal fluxes 
by 
\begin{subequations}\label{eq:FKsig+}
\be\label{eq:FKsig+.int}
F_{K\sig}^n =\;  \frac1{d_\sig} \left[ \left(\rho_K^n\right)^+\left(1-\rho_L^n\right)^+ e^{\frac12(\phi_K - \phi_L)}
-  \left(\rho_L^n\right)^+\left(1-\rho_K^n\right)^+ e^{\frac12(\phi_L - \phi_K)} 
\right] \;\text{for}\; \sig = K|L \in \Ee_\text{int}, 
\ee
and the one~\eqref{eq:FKsig.ext} of the boundary fluxes by 
\be\label{eq:FKsig+.ext}
F_{K\sig}^n = \a_\sig \rho_\sig^n - \b_\sig \quad \text{with}\quad 
 \rho_\sig^n =  \frac{d_\sig \b_\sig + \left(\rho_K^n\right)^+  e^{\frac12(\phi_K-\phi_\sig)}}{d_\sig \a_\sig +  \left(\rho_K^n\right)^+ e^{\frac12(\phi_K-\phi_\sig)} + \left(1-\rho_K^n\right)^+  e^{-\frac12(\phi_K-\phi_\sig)}} \in (0,1)
\ee
\end{subequations}
does not affect the value of the solution $\brho^n$. After performing this slight modification, one can establish the 
following a priori estimate. 
\begin{lem}\label{lem:Linf}
Given $\brho^{n-1} \in [0,1]^\Tt$, $n\geq 1$, any solution $\brho^n$ to the modified scheme~\eqref{eq:cons.1}\&\eqref{eq:FKsig+}
belongs to $\left(0,1\right)^\Tt$. In particular, being a solution to~\eqref{eq:cons.1}\&\eqref{eq:FKsig+} is equivalent to being a solution in $(0,1)^\Tt$  to \eqref{eq:cons.1}--\eqref{eq:rhosign}.
\end{lem}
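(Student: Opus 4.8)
The plan is to prove an $L^\infty$ bound on any solution $\brho^n$ of the modified scheme by a discrete maximum-principle argument, exploiting the monotonicity structure of the fluxes~\eqref{eq:FKsig+}. First I would argue that $\rho_K^n > 0$ for all $K$. Suppose instead that $\min_{K\in\Tt}\rho_K^n \leq 0$ and let $K_0$ be a cell realizing this minimum. Writing~\eqref{eq:cons.1} at $K_0$ as
\[
\frac{\rho_{K_0}^n - \rho_{K_0}^{n-1}}{\tau} m_{K_0} = - \sum_{\sig \in \Ee_{K_0}} m_\sig F_{K_0\sig}^n,
\]
I would show the right-hand side is strictly positive. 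Indeed, since $\left(\rho_{K_0}^n\right)^+ = 0$, the first term in~\eqref{eq:FKsig+.int} vanishes for every interior face $\sig = K_0|L$, so $F_{K_0\sig}^n = -\frac1{d_\sig}\left(\rho_L^n\right)^+ \left(1-\rho_{K_0}^n\right)^+ e^{\frac12(\phi_L-\phi_{K_0})} \leq 0$, and similarly for exterior faces $F_{K_0\sig}^n = \a_\sig \rho_\sig^n - \b_\sig$ where $\rho_\sig^n$ from~\eqref{eq:FKsig+.ext} reduces to $\frac{d_\sig\b_\sig}{d_\sig\a_\sig + \left(1-\rho_{K_0}^n\right)^+e^{-\frac12(\phi_{K_0}-\phi_\sig)}} < \b_\sig/\a_\sig$, hence $F_{K_0\sig}^n < 0$; moreover $F_{K_0\sig}^n < 0$ strictly for at least one face because $\Ee_{K_0}$ is nonempty and either some neighbour has $\left(\rho_L^n\right)^+ > 0$ or some boundary face is present (one must rule out the degenerate possibility that all neighbours also vanish and there is no boundary face — this follows from connectedness of $\O$, propagating $\rho^n \equiv 0$ to the whole mesh, which contradicts the boundary terms). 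Thus $\rho_{K_0}^n - \rho_{K_0}^{n-1} > 0$, so $\rho_{K_0}^n > \rho_{K_0}^{n-1} \geq 0$, a contradiction. The upper bound $\rho_K^n < 1$ follows by the symmetric argument applied to $1 - \rho_K^n$: the substitution $\rho \mapsto 1-\rho$, $\phi \mapsto -\phi$ maps the interior flux~\eqref{eq:FKsig+.int} to its negative and swaps the roles of the two terms, and on the boundary $1 - \rho_\sig^n$ has an analogous lower-bound-by-a-positive-constant structure using $\a_\sig > \b_\sig$.

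Once $\brho^n \in (0,1)^\Tt$ is established for the modified scheme, the final sentence of the lemma is immediate: on $(0,1)^\Tt$ one has $\left(\rho_K^n\right)^+ = \rho_K^n$ and $\left(1-\rho_K^n\right)^+ = 1-\rho_K^n$, so~\eqref{eq:FKsig+.int} coincides with~\eqref{eq:FKsig.int} and~\eqref{eq:FKsig+.ext} with~\eqref{eq:FKsig.ext}--\eqref{eq:rhosign}; hence any solution of~\eqref{eq:cons.1}\&\eqref{eq:FKsig+} lying in $(0,1)^\Tt$ solves the original system, and conversely the original system only admits solutions in $(0,1)^\Tt$ precisely by what was just shown. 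I would also note that the boundary value $\rho_\sig^n$ defined in~\eqref{eq:FKsig+.ext} lies in $(0,1)$ by inspection regardless of the sign of $\rho_K^n$ (numerator positive since $\b_\sig > 0$, and strictly less than the denominator since $\a_\sig > \b_\sig$ and the truncated terms are nonnegative), which is what justifies the enrichment of $\brho^n$ with its edge values.

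The main obstacle is the strictness of the inequality at the minimizing cell and the need to exclude the pathological case where the minimum $0$ is attained on a cluster of cells disconnected, flux-wise, from the boundary. One handles this by a connectedness/propagation argument: if $\rho_{K_0}^n \leq 0$ and $\sum_\sig m_\sig F_{K_0\sig}^n = 0$ forces all the individual nonpositive flux contributions to vanish, then every interior neighbour $L$ of $K_0$ must also satisfy $\left(\rho_L^n\right)^+ = 0$ (since $\left(1-\rho_{K_0}^n\right)^+ \geq 1 > 0$), and $K_0$ can have no exterior face (else $F_{K_0\sig}^n = -\b_\sig < 0$ strictly); iterating over the connected graph $\Tt$ yields $\rho_K^n \leq 0$ for all $K$ and no cell touches $\p\O$, contradicting $\Ee_\text{ext} \neq \emptyset$. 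I do not expect the remaining computations to present difficulty; they are routine manipulations of the explicit flux formulas together with the positivity hypotheses $\a > \b > 0$ on $\G$ and $0 \leq \rho^{n-1} \leq 1$.
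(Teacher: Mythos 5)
Your proof is correct and follows essentially the same contradiction-plus-propagation argument as the paper, which treats the upper bound $\rho_K^n\geq 1$ first and notes the lower bound is symmetric, whereas you start from the minimum; the mechanism (all fluxes acquire a sign at an extremal cell, the scheme equation forces them to vanish, and propagation through neighbours reaches a boundary face where the sign is strict, using the connectedness of the mesh) is identical. The only nitpick is the parenthetical claim $F_{K_0\sig}^n=-\b_\sig$ at an exterior face, which should read $F_{K_0\sig}^n=\a_\sig\rho_\sig^n-\b_\sig<0$, exactly as you had correctly established earlier in the argument.
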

\begin{proof}
We argue by contradiction. Assume that there exists $K\in\Tt$ such that $\rho_K^n \geq 1$, then we deduce from~\eqref{eq:FKsig+} and from $\a_\sig > \b_\sig >0$ that $F_{K\sig}^n \geq 0$ for all $\sig \in \Ee_K$, 
and even that 
\be\label{eq:Linf.1}
F_{K\sig}^n > 0 \quad \text{if}\quad \sig \in \Ee_\text{ext}.
\ee
Then we deduce from \eqref{eq:cons.1} that 
\be\label{eq:Linf.2}
0 \leq \sum_{\sig \in\Ee_K} m_\sig F_{K\sig}^n = \frac{\rho_K^{n-1} - \rho_K^n}{\tau} m_K \leq 0. 
\ee
Therefore, $\rho_K^n = 1$ and all the fluxes $F_{K\sig}^n$, $\sig \in \Ee_K$ vanish. This is only possible if $\rho_L^n = 1$ for each neighboring cell $L$ such that 
$\sig = K|L \in \Ee_\text{int}$. One can iterate to neighbors of neighbors until reaching $K$ such that $\Ee_{K,\text{ext}} \neq \emptyset$. For such a cell, 
the first inequality in~\eqref{eq:Linf.2} is strict, leading to a contradiction, hence $\rho_K^n <1$ for all $K\in\Tt$. 
The bound $\rho_K^n >0$ for all $K\in\Tt$ can be established in a similar way.
\end{proof}

\begin{prop}\label{prop:well-posed}
For all $n \geq 1$, there exists a unique $\brho^n \in (0,1)^{\Tt}$ solution to \eqref{eq:brho0}--\eqref{eq:FKsig}. 
\end{prop}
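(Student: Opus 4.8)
The existence of a solution is the harder half; uniqueness is essentially a consequence of the monotonicity already exploited in Lemma~\ref{lem:Linf}. I would establish both as follows.

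\emph{Uniqueness.} Suppose $\brho^n$ and $\wt\brho^n$ are two solutions in $(0,1)^\Tt$, sharing the same $\brho^{n-1}$. Subtract the two schemes~\eqref{eq:cons.1} and test the difference with $\mathrm{sign}(\rho_K^n - \wt\rho_K^n)$, i.e. pick the cell $K_\star$ maximizing $\rho_K^n - \wt\rho_K^n$. Using the modified fluxes~\eqref{eq:FKsig+} and the fact that $s \mapsto (s)^+$ and $s\mapsto(1-s)^+$ are monotone, one shows that each flux difference $F_{K_\star\sig}^n - \wt F_{K_\star\sig}^n$ has a favorable sign, and the accumulation term forces $\rho_{K_\star}^n - \wt\rho_{K_\star}^n \leq 0$; symmetrically the minimum is $\geq 0$, so $\brho^n = \wt\brho^n$. (Equivalently, one can invoke the monotonicity of the scheme in the sense of order-preservation, which is the standard route for TPFA finite volume schemes with an $M$-matrix structure.) The boundary values $\rho_\sig^n$ are then uniquely determined by~\eqref{eq:rhosign}.

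\emph{Existence.} Here I would use a topological degree / Brouwer fixed-point argument, which is the usual device for nonlinear finite volume schemes. Introduce a homotopy parameter $\lambda \in [0,1]$, deforming the scheme~\eqref{eq:cons.1}\&\eqref{eq:FKsig+} into a trivially solvable one at $\lambda = 0$ (for instance multiply the convective/drift exponentials' deviation from $1$ by $\lambda$, or more simply interpolate the whole flux map linearly between $F_{K\sig}^n$ and the pure-diffusion flux $a_\sig(\rho_K^n - \rho_{K\sig}^n)$ with Robin data, which at $\lambda=0$ is a linear system with an $M$-matrix hence uniquely solvable). The a priori bound of Lemma~\ref{lem:Linf} holds \emph{uniformly in $\lambda\in[0,1]$} since the sign arguments there only used $\a_\sig > \b_\sig > 0$ and the structure of~\eqref{eq:FKsig+}, which survive the deformation; this confines all solutions to the compact set $[0,1]^\Tt$ (in fact to the open set $(0,1)^\Tt$ with a uniform distance to the boundary, by re-running the contradiction argument quantitatively). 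Consequently the topological degree of the map associated with the scheme is constant in $\lambda$, equals $\pm 1$ at $\lambda=0$, hence is nonzero at $\lambda=1$, yielding a solution $\brho^n\in(0,1)^\Tt$. Finally, by Lemma~\ref{lem:Linf} this solution of the modified scheme~\eqref{eq:FKsig+} is a solution of the original scheme~\eqref{eq:FKsig}, and~\eqref{eq:rhosign} reconstructs the consistent boundary values.

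\emph{Main obstacle.} The delicate point is obtaining a priori estimates that keep every solution strictly inside $(0,1)^\Tt$ \emph{with a bound on $\mathrm{dist}(\brho^n,\p([0,1]^\Tt))$ independent of $\lambda$} — this is what makes the degree well-defined on a fixed open set. The contradiction argument of Lemma~\ref{lem:Linf} gives strict inequalities but not an explicit margin; one either upgrades it to a quantitative lower bound (e.g. via a discrete maximum-principle estimate controlling $\min_K \rho_K^n$ from below in terms of $\tau$, the mesh, $\a,\b$), or, more cheaply, one applies the degree argument on the closed box $[0,1]^\Tt$ after checking the map has no zero on the boundary $\p([0,1]^\Tt)$ — which is exactly the content of Lemma~\ref{lem:Linf}. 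The latter avoids any quantitative refinement and is the route I would actually take. Everything else is routine once the order-preservation / $M$-matrix structure is in place.
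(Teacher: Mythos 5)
Your existence argument is essentially the paper's: the authors also run a topological degree argument, homotoping the scheme through $s\in[0,\tau]$ placed in front of the flux term (so that $s=0$ gives the diagonal system $\bbM\brho^{(0)}=\bbM\brho^{n-1}$), with Lemma~\ref{lem:Linf} providing the $s$-uniform confinement of all zeros away from $\p\bigl([0,1]^\Tt\bigr)$; your variant of the homotopy and your observation that no quantitative margin is needed (only the absence of zeros on the boundary of a fixed open box) are both in line with that proof, up to the routine precaution that at the trivial end of the homotopy the solution $\brho^{n-1}$ may sit on $\p\bigl([0,1]^\Tt\bigr)$, so one should work on a slightly enlarged box.

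The uniqueness half, however, has a genuine gap. At the cell $K_\star$ maximizing $\rho_K^n-\check\rho_K^n$ (with positive maximum $M$), the neighbor differences satisfy $\rho_{K_\star\sig}^n-\check\rho_{K_\star\sig}^n\le M$ but can be positive, and since $F_{K\sig}^n$ is \emph{decreasing} in the neighbor value, the flux difference $F_{K_\star\sig}^n-\check F_{K_\star\sig}^n$ need not be nonnegative; so the claim that ``each flux difference has a favorable sign'' fails. Worse, even the sum over $\sig\in\Ee_{K_\star}$ has no sign: writing $F(a,b)=\frac1{d_\sig}\bigl[a(1-b)e^{c}-b(1-a)e^{-c}\bigr]$ with $c=\frac12(\phi_K-\phi_{K\sig})$, one has $\p_a F+\p_b F=\frac{1-a-b}{d_\sig}\,2\sinh(c)$, which can be negative, so when all neighbor differences are close to $M$ the total outgoing flux difference at $K_\star$ can be negative, and for a fixed (possibly large) time step the accumulation term $m_{K_\star}M/\tau$ cannot be played against it. The paper's argument is therefore \emph{global}, not pointwise: it compares both solutions with their componentwise min and max, uses the monotonicity of $\Hh_K^n$ (increasing in $\rho_K^n$, non-increasing in the neighbors) to get $\Hh_K^n(\text{min})\ge 0$ and $\Hh_K^n(\text{max})\le 0$ for every $K$, and only after summing over all of $\Tt$ and exploiting conservativity — so that the internal flux contributions cancel — does it reach $\sum_K m_K\,|\rho_K^n-\check\rho_K^n|/\tau+\sum_{\sig\in\Ee_\text{ext}} m_\sig\alpha_\sig|\rho_\sig^n-\check\rho_\sig^n|\le 0$, hence $\brho^n=\check\brho^n$. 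Your fallback appeal to ``order preservation / $M$-matrix structure'' is not a substitute as stated: the scheme is nonlinear, so there is no $M$-matrix acting on the difference; a Jacobian route would require, e.g., integrating the Jacobian of $\bHh^n$ along the segment joining the two solutions and checking strict column diagonal dominance coming from the $m_K/\tau$ terms (a Gale–Nikaido-type argument), which is a different and additional step you would need to write out.
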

\begin{proof}
The proof splits in two steps. Let us first show that at each time step $n\geq 1$, there 
exists a solution $\brho^n \in (0,1)^{\Tt}$ to \eqref{eq:cons.1}\&\eqref{eq:FKsig}, or equivalently owing to Lemma~\ref{lem:Linf}, $\brho^n$ 
solution to~\eqref{eq:cons.1}\&\eqref{eq:FKsig+}. Note that here, $\rho_\sig^n$ is thought as a function of $\rho_K^n$, cf.~\eqref{eq:rhosign},  rather than as an independent unknown. 

Let $n \geq 1$ be such that $\brho^{n-1} \in [0,1]^n$ is given (this is the case for $n=1$ owing to~\eqref{eq:brho0}). 
For $s \in [0,\tau]$, define $\brho^{(s)} = \left( \rho_K^{(s)}\right)_{K\in\Tt}$ as a solution to 
\be\label{eq:well-posed.1}
\left(\rho_K^{(s)} - \rho_K^{n-1}\right) m_K + s \sum_{\sig \in \Ee_{K}} m_\sig F_{K\sig}^{(s)} = 0, \qquad K\in\Tt, 
\ee
with $F_{K\sig}^{(s)}$ defined by~\eqref{eq:FKsig+} where $\brho^n$ has been replaced by $\brho^{(s)}$. 
For $s=0$, the above system {\magenta can be reformulated as} $\bbM \brho^{(0)} = \bbM \brho^{n-1}$, with the matrix $\bbM = \text{diag}\left(\left(m_K\right)_{K\in\Tt}\right)$ having 
a positive determinant. The unique solution $\brho^{(0)} =  \brho^{n-1}$ belongs to $[0,1]^\Tt$, while any solution $\brho^{(s)}$ for $s>0$ 
belongs to $(0,1)^\Tt$ thanks to Lemma~\ref{lem:Linf}. A standard topological degree argument (see \cite{LS34, Dei85} for a presentation of the topological gradient, and~\cite{EGGH98, Ahmed_intrusion} for applications in a similar context) then shows that the 
nonlinear system~\eqref{eq:well-posed.1} admits at least one solution $\brho^{(s)}$ for all $s>0$. In particular, for $s=\tau$, this shows the existence 
of $\brho^n\in (0,1)^\Tt$ solution \eqref{eq:cons.1}\&\eqref{eq:FKsig}. By a straightforward induction on $n$, one gets the existence of $\brho^n 
\in (0,1)^\Tt$ for all $n \geq 1$.

The second step of the proof consists in proving uniqueness for the solution in $(0,1)^\Tt$ to \eqref{eq:cons.1}\&\eqref{eq:FKsig}. 
Since ${\bf 0} \leq \brho^n \leq {\bf 1}$, $F_{K\sig}^n$ is an increasing function of $\rho_K^n$ and a non-increasing one 
of $\rho_L^n$ for  $L\neq K$. As a consequence, the nonlinear system corresponding to \eqref{eq:cons.1} {\magenta can be rewritten as follows:}
\be\label{eq:bHh}
\bHh^n(\brho^n) = \left( \Hh_K^n\left(\rho_K^n, \left(\rho_L^n\right)_{L\neq K}\right) \right)_{K\in\Tt} = {\bf 0}, 
\ee
where $\Hh_K$ is increasing w.r.t. its first variable and non-decreasing w.r.t. the others. 
Assume that the scheme admits another solution $\check \brho^n \in [0,1]^\Tt$ corresponding to the same previous step data $\brho^{n-1}$: 
\[
\bHh^n(\check \brho^n) = \left( \Hh_K^n\left(\check \rho_K^n, \left(\check \rho_L^n\right)_{L\neq K}\right) \right)_{K\in\Tt} = {\bf 0}, 
\]
Therefore, denoting by $a \wedge b = \min(a,b)$ and $a\vee b = \max(a,b)$, one has 
\[
\Hh_K^n\left(\rho_K^n, \left(\rho_L^n \wedge \check \rho_L^n \right)_{L\neq K}\right) \geq 0, 
\qquad 
\Hh_K^n\left(\check \rho_K^n, \left(\rho_L^n \wedge \check \rho_L^n \right)_{L\neq K}\right) \geq 0, 
\]
so that, since $\rho_K^n \wedge \check \rho_K^n$ is either equal to $\rho_K^n$ or $\check \rho_K^n$, 
\be\label{eq:well-posed.min}
\Hh_K^n\left( \rho_K^n \wedge \check \rho_K^n, \left(\rho_L^n \wedge \check \rho_L^n \right)_{L\neq K}\right) \geq 0, \qquad K\in\Tt. 
\ee
Similarly, there holds 
\be\label{eq:well-posed.max}
\Hh_K^n\left( \rho_K^n \vee \check \rho_K^n, \left(\rho_L^n \vee \check \rho_L^n \right)_{L\neq K}\right) \leq 0, \qquad K\in\Tt. 
\ee
Subtracting~\eqref{eq:well-posed.min} to \eqref{eq:well-posed.max}, summing over $K\in\Tt$ and using the conservativity of the fluxes provides 
\[
\sum_{K\in\Tt} \frac{\left|\rho_K^n - \check \rho_K^n\right|}\tau m_K + \sum_{\sig \in \Ee_\text{ext}} m_\sig \a_\sig |\rho_\sig^n - \check \rho_\sig^n| 
 \leq 0, 
\]
where $\check \rho_\sig^n$ is computed thanks to formula~\eqref{eq:rhosign} with $\check \rho_K^n$ instead of $\rho_K^n$. 
We conclude that $\brho^n = \check \brho^n$, completing the proof of Proposition~\ref{prop:well-posed}.
\end{proof}

\subsection{Discrete energy/dissipation estimates}\label{ssec:NRG.disc}
The goal of this section is to show some refined energy estimate implying in particular~\eqref{eq:NRG.tot.Tt}. 
We pay attention to precisely quantifying the energy dissipation since this information is key to derive 
the compactness results to be used in Section~\ref{sec:convergence}. 
Denote by $\bF^n = \left(F_{K\sig}^n\right)_{K \in \Tt, \sig \in \Ee_K}$ the approximate fluxes at time step $n\geq 1$, then 
taking inspiration in~\cite{MPR14, PRST22}, we introduce the primal dissipation potential by setting 
\be\label{eq:Dd}
\Dd_\Ee(\brho^n, \bF^n) = \sum_{\sig \in \Ee} a_\sig \eta_\sig^n\Psi\left( \frac{d_\sig F_{K\sig}^n}{\eta_\sig^n}\right) \geq 0, 
\ee
where $\Psi$ is the continuous nonnegative strictly convex even function vanishing at $0$ with superlinear growth at $\oo$ 
defined by 
\[
\Psi(z) = 2 z \log \left( \frac{z + \sqrt{z^2 + 4}}2\right) - 2 \sqrt{z^2+4} + 4, \qquad z \in \R, 
\]
and where $\eta_\sig^n$, which is defined by~\eqref{eq:etasig} for $\sig \in \Ee_\text{int}$ and by 
$\eta_\sig^n = \sqrt{\eta(\rho_K^n) \eta(\rho_\sig^n)}$ for $\sig \in \Ee_\text{ext}$,
is positive thanks to Proposition~\ref{prop:well-posed}. 

As highlighted by the notation, the dissipation 
is associated to the edges $\Ee$. Yet, the dissipation potential defined in~\eqref{eq:Dd} only corresponds to the dissipation in the bulk even though boundary fluxes also contribute to the dissipation of the total free energy, as shows~\eqref{eq:dissip.cont}. This choice is made for simplicity and is possible since 
the quantification of the dissipation across the sole bulk already provides enough compactness to carry out the 
convergence proof, see Section~\ref{sec:convergence}. Note that each internal edge $\sig = K|L \in \Ee_\text{int}$ appears 
only once in~\eqref{eq:Dd} and that $\Psi(\frac{d_\sig F_{K\sig}^n}{\eta_\sig^n}) = \Psi(\frac{d_\sig F_{L\sig}^n}{\eta_\sig^n})$ since $\Psi$ is even and since $F_{K\sig}^n + F_{L\sig}^n = 0$.

Given $\bG^n = \left(G_{K\sig}^n\right)_{K\in\Tt, \sig \in \Ee_K}$ with $G_{K\sig}^n + G_{L\sig}^n = 0$ for all $\sig = K|L \in \Ee_{\text{int}}$, 
then we define the  dual dissipation potential $\Dd_\Ee^*: (0,1)^\Tt \times \R^\Ee \to \R_+$ by 
\be\label{eq:Dd*}
\Dd_\Ee^*(\brho^n, \bG^n) = \sum_{\sig \in \Ee} a_\sig  \eta_\sig^n \Psi^*( G_{K\sig}^n) \geq 0, 
\ee
where $\Psi^*$ is the Legendre transform of $\Psi$, defined by 
\[
\Psi^*(s) = 4 \left(\cosh(s/2) - 1\right),
\qquad s \in \R.
\]
It is continuous, nonnegative, uniformly convex and vanishes at $0$. 
\begin{prop}\label{prop:NRG.disc}
Let $\left(\brho^n\right)_{n\geq 1} \subset (0,1)^{\Tt\cup\Ee_\text{ext}}$ be the iterated solution to the scheme~\eqref{eq:brho0}--\eqref{eq:rhosign}. For $n\geq 1$, 
let $\bG^n = \left(G_{K\sig}^n\right)_{K\in\Tt, \sig \in \Ee_K}$  be defined by 
\be\label{eq:GKsig}
G_{K\sig}^n = \xi_K^n - \xi_{K\sig}^n =  \begin{cases}
\xi_K^n - \xi_L^n & \text{if}\; \sig = K|L \in \Ee_\text{int}, \\
\xi_K^n - \xi_\sig^n & \text{it}\; \sig \in \Ee_\text{ext}, 
\end{cases}
\ee
then there holds
\be\label{eq:NRG.disc}
\frac{\Ff_{\Tt,\text{tot}}^n - \Ff_{\Tt,\text{tot}}^{n-1}}{\tau} + \Dd_\Ee(\brho^n, \bF^n) + \Dd_\Ee^*(\brho^n, \bG^n) \leq 0.
\ee
\end{prop}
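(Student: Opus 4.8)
The plan is to mimic at the discrete level the continuous computation carried out in~\eqref{eq:dissip.cont}. First I would expand the difference quotient $\frac{\Ff_\Tt(\brho^n) - \Ff_\Tt(\brho^{n-1})}{\tau}$ using the convexity of $\rho \mapsto h(\rho) + \phi_K \rho$: for each $K\in\Tt$ one has $h(\rho_K^n) + \phi_K\rho_K^n - h(\rho_K^{n-1}) - \phi_K\rho_K^{n-1} \leq (h'(\rho_K^n) + \phi_K)(\rho_K^n - \rho_K^{n-1}) = \xi_K^n (\rho_K^n - \rho_K^{n-1})$. Multiplying by $m_K/\tau$, summing over $K$, and invoking the scheme~\eqref{eq:cons.1} to replace $\frac{m_K}{\tau}(\rho_K^n - \rho_K^{n-1})$ by $-\sum_{\sig\in\Ee_K} m_\sig F_{K\sig}^n$, I obtain
\[
\frac{\Ff_\Tt(\brho^n) - \Ff_\Tt(\brho^{n-1})}{\tau} \leq - \sum_{K\in\Tt} \sum_{\sig\in\Ee_K} m_\sig \xi_K^n F_{K\sig}^n.
\]
Then I reorganize the double sum edge by edge. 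For an interior edge $\sig = K|L$, using $F_{L\sig}^n = -F_{K\sig}^n$, the two contributions combine to $m_\sig (\xi_K^n - \xi_L^n) F_{K\sig}^n = m_\sig G_{K\sig}^n F_{K\sig}^n$; for a boundary edge $\sig\in\Ee_\text{ext}$ the single contribution is $m_\sig \xi_K^n F_{K\sig}^n$, which I rewrite as $m_\sig(\xi_K^n - \xi_\sig^n)F_{K\sig}^n + m_\sig\xi_\sig^n F_{K\sig}^n = m_\sig G_{K\sig}^n F_{K\sig}^n + m_\sig \xi_\sig^n F_{K\sig}^n$. Recalling from~\eqref{eq:mu+xi}--\eqref{eq:xiG} and~\eqref{eq:xiGsig} that $\xi_\sig^n = h'(\rho_\sig^n) + \phi_\sig$ while $\xi_\sig^\G = \phi_\sig - \log(\a_\sig/\b_\sig - 1)$, and that by definition of $\Ff_{\Tt,\text{tot}}^n$ in~\eqref{eq:FTt} one has $\Ff_{\Tt,\text{tot}}^n - \Ff_{\Tt,\text{tot}}^{n-1} = \Ff_\Tt(\brho^n) - \Ff_\Tt(\brho^{n-1}) - \tau\sum_{\sig\in\Ee_\text{ext}} m_\sig \xi_\sig^\G F_{K\sig}^n$, the boundary terms reassemble into
\[
\frac{\Ff_{\Tt,\text{tot}}^n - \Ff_{\Tt,\text{tot}}^{n-1}}{\tau} \leq - \sum_{\sig\in\Ee} m_\sig G_{K\sig}^n F_{K\sig}^n - \sum_{\sig\in\Ee_\text{ext}} m_\sig (\xi_\sig^n - \xi_\sig^\G) F_{K\sig}^n,
\]
where in the first sum each interior edge is counted once with a consistent orientation (licit since the integrand $G_{K\sig}^n F_{K\sig}^n$ is orientation-invariant).

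It then remains to handle the two sums on the right. For the boundary sum, I would check that $(\xi_\sig^n - \xi_\sig^\G) F_{K\sig}^n \geq 0$ for every $\sig\in\Ee_\text{ext}$: indeed the discrete analogue of~\eqref{eq:boundary.2}, obtained from~\eqref{eq:FKsig.ext}--\eqref{eq:rhosign}, gives $F_{K\sig}^n = 2\k_\sig\sqrt{\rho_\sig^n(1-\rho_\sig^n)}\sinh\big(\tfrac12(\xi_\sig^n - \xi_\sig^\G)\big)$ with $\k_\sig = \sqrt{\b_\sig(\a_\sig - \b_\sig)} > 0$, so that $F_{K\sig}^n$ has the same sign as $\xi_\sig^n - \xi_\sig^\G$ and the product is nonnegative; hence $-\sum_{\sig\in\Ee_\text{ext}} m_\sig(\xi_\sig^n - \xi_\sig^\G)F_{K\sig}^n \leq 0$ and this term may simply be dropped. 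For the main sum, the Fenchel--Young inequality for the convex conjugate pair $(\Psi,\Psi^*)$ gives, for each edge $\sig$, $\frac{d_\sig F_{K\sig}^n}{\eta_\sig^n} \cdot G_{K\sig}^n \leq \Psi\big(\frac{d_\sig F_{K\sig}^n}{\eta_\sig^n}\big) + \Psi^*(G_{K\sig}^n)$; multiplying by $a_\sig\eta_\sig^n = \frac{m_\sig}{d_\sig}\eta_\sig^n$ and summing over $\sig\in\Ee$ yields $\sum_{\sig\in\Ee} m_\sig F_{K\sig}^n G_{K\sig}^n \leq \Dd_\Ee(\brho^n,\bF^n) + \Dd_\Ee^*(\brho^n,\bG^n)$, whence $-\sum_{\sig\in\Ee} m_\sig G_{K\sig}^n F_{K\sig}^n \leq -\Dd_\Ee(\brho^n,\bF^n) - \Dd_\Ee^*(\brho^n,\bG^n)$ once one notices that actually equality holds in Fenchel--Young here, because by~\eqref{eq:FKsig.int.2} one has $\frac{d_\sig F_{K\sig}^n}{2\eta_\sig^n} = \sinh\big(\tfrac12 G_{K\sig}^n\big)$, i.e. $\frac{d_\sig F_{K\sig}^n}{\eta_\sig^n} = (\Psi')^{-1}(G_{K\sig}^n) = (\Psi^*)'(G_{K\sig}^n)$, which is exactly the equality case. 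Combining the three estimates gives~\eqref{eq:NRG.disc}.

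The main obstacle, I expect, is purely bookkeeping rather than conceptual: getting the boundary accounting exactly right — in particular verifying that the $\xi_\sig^\G$-terms produced by the edge-by-edge reorganization of $\sum_K\sum_{\sig} m_\sig\xi_K^n F_{K\sig}^n$ coincide precisely with the telescoped increment of the boundary contribution $\sum_{p\geq 1}\tau\sum_{\sig\in\Ee_\text{ext}} m_\sig\xi_\sig^\G F_{K\sig}^p$ hidden in $\Ff_{\Tt,\text{tot}}^n$, and that the leftover $\xi_\sig^n - \xi_\sig^\G$ pairs with $F_{K\sig}^n$ with the correct sign. The identification $\frac{d_\sig F_{K\sig}^n}{2\eta_\sig^n} = \sinh(\tfrac12 G_{K\sig}^n)$ used both for the bulk Fenchel--Young equality and the boundary sign is a direct consequence of~\eqref{eq:FKsig.int.2} and its boundary counterpart, so no new computation is needed there. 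As a by-product, since both $\Dd_\Ee$ and $\Dd_\Ee^*$ are nonnegative, \eqref{eq:NRG.disc} immediately yields the monotonicity $\Ff_{\Tt,\text{tot}}^n \leq \Ff_{\Tt,\text{tot}}^{n-1}$ of~\eqref{eq:NRG.tot.Tt}, the lower bound being obtained exactly as in the proof of Proposition~\ref{prop:NRG} from $\Ff_\Tt(\brho^n)\geq 0$ and the uniform bounds on $\xi_\sig^\G$ and $F_{K\sig}^n$.
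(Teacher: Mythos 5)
Your proposal is correct and follows essentially the same route as the paper: test the scheme with $\xi_K^n$ (equivalently, convexity of $h$ combined with \eqref{eq:cons.1}), reorganize edgewise, drop the nonnegative boundary contribution via the discrete Butler--Volmer identity $F_{K\sig}^n = 2\sqrt{\b_\sig(\a_\sig-\b_\sig)\rho_\sig^n(1-\rho_\sig^n)}\sinh\bigl(\tfrac12(\xi_\sig^n-\xi_\sig^\G)\bigr)$, and use the equality case of Young--Fenchel coming from $d_\sig F_{K\sig}^n/\eta_\sig^n=(\Psi^*)'(G_{K\sig}^n)$, exactly as in the paper's decomposition $A^n_\Tt+B^n_\Tt+C^n_\Tt=0$. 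One small slip in your bookkeeping: by \eqref{eq:FTt} one has $\Ff^n_{\Tt,\text{tot}}-\Ff^{n-1}_{\Tt,\text{tot}}=\Ff_\Tt(\brho^n)-\Ff_\Tt(\brho^{n-1})+\tau\sum_{\sig\in\Ee_\text{ext}} m_\sig\,\xi^\G_\sig F^n_{K\sig}$ (plus sign, not minus), which is in fact the relation your subsequent displayed inequality, and the rest of your argument, actually uses.
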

\begin{proof} 
Since the solution $\brho^n$, $n\geq1$, belongs to $(0,1)^{\Tt\cup\Ee_\text{ext}}$, the discrete electrochemical potential 
$\bxi^n = \left(\left(\xi_K^n\right)_{K\in\Tt}, \left(\xi_\sig^n\right)_{\sig \in \Ee_\text{ext}}\right) \in \R^{\Tt\cup\Ee_\text{ext}}$ is well defined. 
Multiplying the discrete conservation law~\eqref{eq:cons.1} by $\xi_K^n$ and summing over $K\in\Tt$ leads to 
\be\label{eq:ABC}
A_\Tt^n + B_\Tt^n + C_\Tt^n= 0, 
\ee 
with
\[
A_\Tt^n =  \sum_{K\in\Tt} m_K \frac{\rho_K^n - \rho_K^{n-1}}\tau \xi_K^n , \qquad
B_\Tt^n =   \sum_{\sig \in \Ee}  m_\sig F_{K\sig}^n G_{K\sig}^n
\quad \text{and} \quad 
C_\Tt^n =  \sum_{\sig \in \Ee_\text{ext}} m_\sig F_{K\sig}^n \xi_\sig^n.
\]
Similarly to what we did in~\eqref{eq:boundary.2} at the continuous level, the external fluxes $F_{K\sig}^n$ given by~\eqref{eq:FKsig.ext} 
can be rewritten as
\[
F_{K\sig}^n = 2 \sqrt{\beta_\sig(\a_\sig - \beta_\sig) \rho_\sig^n (1-\rho_\sig^n)} \sinh\left(\frac12(\xi_\sig^n - \xi_\sig^\G)\right), \qquad \sig \in \Ee_\text{ext}.
\]
Therefore, $F_{K\sig}^n (\xi_\sig^n - \xi_\sig^\G) \geq 0$ for all $\sig \in \Ee_\text{ext}$, so that
\be\label{eq:C}
C_\Tt^n \geq  \sum_{\sig \in \Ee_{\text{ext}}} m_\sig F_{K\sig}^n  \xi_\sig^\G. 
\ee
Concerning the bulk term $B_\Tt^n$, the writing~\eqref{eq:FKsig.int.2} of the internal edge fluxes and its straightforward counterpart for boundary edges show that 
\[
\frac{d_\sig F_{K\sig}^n}{\eta_\sig^n} = 2 \sinh\left(\frac{G_{K\sig}^n}2\right) = \left(\Psi^*\right)'(G_{K\sig}^n).
\]
Therefore, we have equality in the Young-Fenchel inequality 
\[
\frac{d_\sig F_{K\sig}^n}{\eta_\sig^n} G_{K\sig}^n = \Psi\left(\frac{d_\sig F_{K\sig}^n}{\eta_\sig^n}\right) + \Psi^*\left(G_{K\sig}^n\right). 
\]
As a consequence, 
\be\label{eq:B}
B_\Tt^n =   \sum_{\sig \in \Ee}  a_\sig \eta_\sig^n \frac{d_\sig F_{K\sig}^n}{\eta_\sig^n} G_{K\sig}^n
= \Dd_\Ee(\brho^n, \bF^n) + \Dd_\Ee^*(\brho^n,\bG^n).
\ee
For the accumulation term $A_\Tt^n$, the convexity of the mixing entropy density $h$ implies that 
\[
\left(\rho_K^n - \rho_K^{n-1}\right) h'(\rho_K^n) \geq h(\rho_K^n) - h(\rho_K^{n-1}), \qquad K\in\Tt. 
\]
Therefore, it follows from the definition~\eqref{eq:FTt} of $\Ff_\Tt(\brho^n)$ that 
\be\label{eq:A}
A_\Tt^n \geq \frac{\Ff_\Tt(\brho^n) - \Ff_\Tt(\brho^{n-1})}\tau.
\ee
We recover the discrete energy dissipation estimate~\eqref{eq:NRG.disc} by combining~\eqref{eq:C}--\eqref{eq:A} in~\eqref{eq:ABC} and by using the definition~\eqref{eq:FTt} of $\Ff_{\Tt,\text{tot}}^n$.
\end{proof}

Since $\phi$ is assumed to be nonnegative, so does $\Ff_\Tt(\brho^n)$. The upper bound we deduce from Proposition~\ref{prop:NRG.disc} 
is rather on $\Ff_{\Tt, \text{tot}}^n$, which is not bounded from below so far. Obtaining a time-dependent lower-bound for $\Ff_{\Tt, \text{tot}}^n$ is the purpose 
of the following corollary. Its proof, the details of which are left to the reader, relies on the fact that both $\xi^\G_\sig$ and $F_{K\sig}^n$ are uniformly bounded 
for each $\sig \in \Ee_{\text{ext}}$.

\begin{coro}\label{coro:NRG.disc}
Let $\cter{cte:Ftot}$ be as in Proposition~\ref{prop:NRG}, then 
\[
- \cter{cte:Ftot} t_n \leq \Ff_{\Tt, \text{tot}}^n \leq \Ff_{\Tt, \text{tot}}^{n-1} \leq \Ff_\Tt(\brho^0) ,
\qquad  n \geq 1.
\]
In particular, \eqref{eq:NRG.tot.Tt} holds true.
\end{coro}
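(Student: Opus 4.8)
The plan is to combine the one-step energy dissipation inequality~\eqref{eq:NRG.disc} from Proposition~\ref{prop:NRG.disc} with the nonnegativity of the two dissipation potentials $\Dd_\Ee$ and $\Dd_\Ee^*$ to get monotonicity, and then separately bound $\Ff_\Tt(\brho^n)$ from below and control the boundary contribution in $\Ff_{\Tt,\text{tot}}^n$. First I would drop the nonnegative terms $\Dd_\Ee(\brho^n,\bF^n) \geq 0$ and $\Dd_\Ee^*(\brho^n,\bG^n) \geq 0$ in~\eqref{eq:NRG.disc}, which immediately yields $\Ff_{\Tt,\text{tot}}^n \leq \Ff_{\Tt,\text{tot}}^{n-1}$ for every $n\geq 1$; iterating this gives $\Ff_{\Tt,\text{tot}}^n \leq \Ff_{\Tt,\text{tot}}^0 = \Ff_\Tt(\brho^0)$, where the last equality is the observation recorded just before Theorem~\ref{thm:fixed}. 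This settles the upper chain of inequalities, and in particular the upper bound in~\eqref{eq:NRG.tot.Tt}.

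For the lower bound I would mimic the continuous argument used for~\eqref{eq:Ff.bound} in Proposition~\ref{prop:NRG}. Writing $\Ff_{\Tt,\text{tot}}^n = \Ff_\Tt(\brho^n) + \sum_{p=1}^{n}\tau\sum_{\sig\in\Ee_\text{ext}} m_\sig \xi^\G_\sig F_{K\sig}^p$ (note the sum over $p\geq 1$ in~\eqref{eq:FTt} is actually finite once we evaluate at time index $n$, since for a well-posed scheme only finitely many steps are involved — here I would simply understand $\Ff_{\Tt,\text{tot}}^n$ as this truncated quantity consistent with~\eqref{eq:NRG.disc}), one uses $\Ff_\Tt(\brho^n) \geq 0$, valid because $h \geq 0$, $\phi \geq 0$ and $0 \leq \rho_K^n \leq 1$ by Proposition~\ref{prop:well-posed}. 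It then remains to bound $\left|\sum_{p=1}^{n}\tau\sum_{\sig\in\Ee_\text{ext}} m_\sig \xi^\G_\sig F_{K\sig}^p\right| \leq t_n \sum_{\sig\in\Ee_\text{ext}} m_\sig |\xi^\G_\sig|\,|F_{K\sig}^p|$ and to note that $\xi^\G_\sig$ is bounded through its expression~\eqref{eq:xiGsig} together with the regularity of $\phi,\a,\b$ and $\a>\b>0$, while $F_{K\sig}^p = \a_\sig\rho_\sig^p - \b_\sig$ is bounded using $0\leq \rho_\sig^p \leq 1$ (Lemma~\ref{lem:Linf}). Collecting these gives $\Ff_{\Tt,\text{tot}}^n \geq -\cter{cte:Ftot} t_n$ with the same constant $\cter{cte:Ftot}$ as in Proposition~\ref{prop:NRG}, since it depends only on $\G,\a,\b,\phi$.

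Finally, putting the monotonicity, the upper bound $\Ff_\Tt(\brho^0)$, and the lower bound $-\cter{cte:Ftot} t_n$ together yields the full chain $-\cter{cte:Ftot} t_n \leq \Ff_{\Tt,\text{tot}}^n \leq \Ff_{\Tt,\text{tot}}^{n-1} \leq \Ff_\Tt(\brho^0)$, which contains~\eqref{eq:NRG.tot.Tt}. I do not anticipate a genuine obstacle here: the only mild subtlety is bookkeeping the boundary-sum term in the definition of $\Ff_{\Tt,\text{tot}}^n$ consistently with~\eqref{eq:NRG.disc} so that the telescoping is clean, and making sure the constant extracted matches the one from Proposition~\ref{prop:NRG} — this is exactly why the corollary says the details are left to the reader.
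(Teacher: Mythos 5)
Your proposal is correct and follows exactly the route the paper intends (and leaves to the reader): drop the nonnegative dissipation terms in \eqref{eq:NRG.disc} to get monotonicity and telescope to $\Ff_\Tt(\brho^0)$, then use $\Ff_\Tt(\brho^n)\geq 0$ together with the uniform bounds on $\xi^\G_\sig$ (from \eqref{eq:xiGsig}, $\a>\b>0$ and the regularity of $\a,\b,\phi$) and on $F_{K\sig}^p=\a_\sig\rho_\sig^p-\b_\sig$ to recover the lower bound with the same constant $\cter{cte:Ftot}$ as in Proposition~\ref{prop:NRG}. The only cosmetic point is to write the boundary estimate with a supremum over $p$ and a factor $m_\G$ so the index $p$ does not dangle, but this does not affect the argument.
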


In Proposition~\ref{prop:NRG.disc}, the free energy dissipation is quantified thanks to the non-homogeneous functionals $\Dd_\Ee$ and $\Dd_\Ee^*$. 
The goal of the next Lemma is to deduce from this estimate some more classical discrete $L^2_\text{loc}(\R_+; H^1(\O))$ estimate on $\left(\brho^n\right)_{n\geq1}$.

\begin{lem}\label{lem:dissip*}There exists $\ctel{cte:L2H1}$ depending only on $\cter{cte:Ftot}$, $\O$ and $\phi$  such that
\[
\sum_{p=1}^n \tau \sum_{\sig \in \Ee} a_\sig \left( \rho_K^p - \rho_{K\sig}^p \right)^2 \leq \cter{cte:L2H1}(1 + t_n), \qquad \forall\ n \geq 1. 
\]
\end{lem}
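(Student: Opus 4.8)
**Proof plan for Lemma \ref{lem:dissip*}.**

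The plan is to extract the classical $L^2(H^1)$-type bound from the dissipation potential $\Dd_\Ee(\brho^p,\bF^p)$ controlled in Proposition~\ref{prop:NRG.disc}, exploiting the explicit identity \eqref{eq:id.sinh}. Summing \eqref{eq:NRG.disc} over $p=1,\dots,n$ and using the telescoping together with Corollary~\ref{coro:NRG.disc}, one gets
\[
\sum_{p=1}^n \tau\,\Dd_\Ee(\brho^p,\bF^p) \leq \Ff_{\Tt,\text{tot}}^0 - \Ff_{\Tt,\text{tot}}^n \leq \Ff_\Tt(\brho^0) + \cter{cte:Ftot}\, t_n,
\]
and the right-hand side is bounded by a constant times $(1+t_n)$ thanks to \eqref{eq:Ff.init} and $\delta_\Tt \leq \mathrm{diam}(\O)$. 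So it suffices to bound $\sum_{\sig\in\Ee} a_\sig (\rho_K^p - \rho_{K\sig}^p)^2$ from above by a constant times $\Dd_\Ee(\brho^p,\bF^p)$ plus a harmless lower-order term.

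The key algebraic step is the following pointwise estimate on each edge: writing $z = d_\sig F_{K\sig}^p/\eta_\sig^p$, the identity \eqref{eq:id.sinh} gives $\eta_\sig^p\sinh\bigl(\tfrac12(h'(\rho_K^p)-h'(\rho_{K\sig}^p))\bigr) = \tfrac12(\rho_K^p-\rho_{K\sig}^p)$, but here one rather needs to relate $z$ itself to $\rho_K^p - \rho_{K\sig}^p$ via \eqref{eq:FKsig.int.3}: namely $d_\sig F_{K\sig}^p = (\rho_K^p - \rho_{K\sig}^p) + \tfrac12(\eta(\rho_K^p)+\eta(\rho_{K\sig}^p))(\phi_K-\phi_{K\sig}) + d_\sig R_\sig^p$. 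The term I want to control is $a_\sig(\rho_K^p - \rho_{K\sig}^p)^2 = \tfrac{m_\sig}{d_\sig}(\rho_K^p-\rho_{K\sig}^p)^2$. I will bound $(\rho_K^p-\rho_{K\sig}^p)^2 \lesssim (d_\sig F_{K\sig}^p)^2 + (\phi_K-\phi_{K\sig})^2$ (absorbing the quadratic remainder terms in $R_\sig^p$, which are all of order $|\phi_K-\phi_{K\sig}|$ times lower powers, using $0\leq\rho\leq 1$ and $\eta\leq 1$, possibly after a Young inequality to absorb a small multiple of $(\rho_K^p-\rho_{K\sig}^p)^2$ back to the left). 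Then I convert $(d_\sig F_{K\sig}^p)^2 = (\eta_\sig^p)^2 z^2$ into the dissipation: since $z^2 \leq c\,\eta_\sig^p\,\Psi(z)$ on any bounded range of $\eta_\sig^p z = d_\sig F_{K\sig}^p$ — more precisely, $\Psi$ behaves quadratically near $0$ and superlinearly at infinity, and an elementary inequality of the form $(\eta w)^2 \leq C\bigl(\eta\Psi(w) + \eta\bigr)$ holds with $C$ absolute because $d_\sig F_{K\sig}^p$ is uniformly bounded (the arguments $\rho_K^p,\rho_{K\sig}^p \in [0,1]$ and $\phi$ Lipschitz give $|d_\sig F_{K\sig}^p| \leq C$). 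Summing $a_\sig$ times this over $\sig$ and over $p$, the $\Psi$-part gives $\sum_p\tau\,\Dd_\Ee(\brho^p,\bF^p) \lesssim (1+t_n)$, the stray $a_\sig$ part gives $\sum_{\sig} a_\sig \lesssim \zeta_\Tt\,m_\O$ times $t_n/\tau \cdot \tau = t_n$, wait — one must be careful: the leftover constant term summed over $p$ picks up a factor $n\tau = t_n$, which is fine, and summed over $\sig$ gives $\sum_\sig a_\sig d_{K\sig} \lesssim \sum_\sig m_\sig d_\sig \leq d\zeta_\star m_\O$ up to mesh-regularity; finally the $(\phi_K-\phi_{K\sig})^2$ contribution gives $\sum_\sig a_\sig (\phi_K-\phi_{K\sig})^2 \leq \|\grad\phi\|_\infty^2 \sum_\sig a_\sig d_\sig^2 = \|\grad\phi\|_\infty^2 \sum_\sig m_\sig d_\sig \lesssim m_\O$, again times $t_n$ after summing in $p$. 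Collecting everything yields the claimed bound with $\cter{cte:L2H1}$ depending only on $\cter{cte:Ftot}$, $\O$ and $\phi$ (the mesh-regularity constant $\zeta_\star$ does not actually enter because $\sum_\sig m_\sig d_\sig \leq d\, m_\O$ from the geometric relation $m_K = \tfrac1d\sum_{\sig\in\Ee_K} m_\sig d_{K\sig}$, with no regularity needed).

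The main obstacle is the pointwise edge estimate: showing cleanly that $(\rho_K^p - \rho_{K\sig}^p)^2 \leq C\bigl(\eta_\sig^p\Psi(d_\sig F_{K\sig}^p/\eta_\sig^p) + (\phi_K-\phi_{K\sig})^2 + d_\sig\bigr)$. The subtlety is that $\eta_\sig^p$ can be arbitrarily small (when $\rho$ is near $0$ or $1$ on one side), so one cannot simply divide; however, when $\eta_\sig^p$ is small, \eqref{eq:id.sinh} forces $|\rho_K^p - \rho_{K\sig}^p|$ small unless $|h'(\rho_K^p) - h'(\rho_{K\sig}^p)|$ is large, and in the latter case $\Psi$ of the argument is large enough to compensate — this degenerate case must be handled by a short case distinction on the size of $\eta_\sig^p$ or, more efficiently, by using the uniform bound $|d_\sig F_{K\sig}^p| \leq C$ to stay in a regime where $\Psi(w) \geq c\min(w^2, |w|)$ with the relevant comparison. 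I expect this to be the only genuinely delicate point; everything else is summation with Young's inequality and the geometric identities already in the text.
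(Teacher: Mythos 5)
Your route is genuinely different from the paper's. The paper extracts the $L^2(H^1)$-type bound from the \emph{dual} dissipation $\Dd_\Ee^*(\brho^p,\bG^p)$: writing $G_{K\sig}^p=\bigl(h'(\rho_K^p)-h'(\rho_{K\sig}^p)\bigr)+(\phi_K-\phi_{K\sig})$ and using the addition formula for $\cosh$ together with the identities \eqref{eq:id.sinh}--\eqref{eq:id.cosh}, it gets the pointwise lower bound $\eta_\sig^p\Psi^*(G_{K\sig}^p)\geq (\rho_K^p-\rho_{K\sig}^p)^2-C\,d_\sig^2$, and then sums using $\sum_\sig m_\sig d_\sig = d\,m_\O$. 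This completely bypasses the flux expansion \eqref{eq:FKsig.int.3} and the degenerate-$\eta_\sig$ discussion that you flag as your main obstacle. Your plan instead works on the \emph{primal} side, bounding $(\rho_K^p-\rho_{K\sig}^p)^2\lesssim (d_\sig F_{K\sig}^p)^2+(\phi_K-\phi_{K\sig})^2$ via \eqref{eq:FKsig.int.3}--\eqref{eq:Rsign} and then converting $(d_\sig F_{K\sig}^p)^2$ into $\eta_\sig^p\Psi(d_\sig F_{K\sig}^p/\eta_\sig^p)$; since $\sum_p\tau\,\Dd_\Ee(\brho^p,\bF^p)$ is controlled exactly as in \eqref{eq:dtrho.3}, this strategy is viable and yields a constant depending on the same data, so it is an acceptable alternative — but it is heavier than the paper's two-line use of \eqref{eq:id.cosh}, which hands you the square $(\rho_K^p-\rho_{K\sig}^p)^2$ for free.

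There is, however, one step in your write-up that fails as stated: the handling of the additive term in $(\eta w)^2\leq C(\eta\Psi(w)+\eta)$. After multiplying by $a_\sig$ and summing, the leftover is $C\sum_{\sig\in\Ee}a_\sig\eta_\sig^p$, and $\sum_\sig a_\sig$ is \emph{not} $O(m_\O)$: it scales like $\delta_\Tt^{-2}$ (there are $\sim\delta_\Tt^{-d}$ edges with $a_\sig\sim\delta_\Tt^{d-2}$), so this term blows up under mesh refinement. Your attempted repair, ``$\sum_\sig a_\sig d_{K\sig}\lesssim\sum_\sig m_\sig d_\sig\leq d\,\zeta_\star m_\O$'', is doubly wrong: no factor $d_{K\sig}$ is present in the leftover term, and $a_\sig d_{K\sig}\leq m_\sig$ only gives $\sum_\sig m_\sig\sim m_\O/\delta_\Tt$, not $m_\O$. (Your first formulation $z^2\leq c\,\eta_\sig^p\Psi(z)$ is also false when $\eta_\sig^p$ is small.) The fix is to prove the sharper inequality $(\eta_\sig^p w)^2\leq C\,\eta_\sig^p\Psi(w)$ with no additive term, which is where $\eta_\sig^p\leq 1/4$ must be used: for $|w|\leq 1$, $\Psi''(z)=2/\sqrt{z^2+4}\geq 2/\sqrt5$ gives $\Psi(w)\geq w^2/\sqrt5$, hence $(\eta_\sig^p w)^2\leq\tfrac14\eta_\sig^p w^2\leq\tfrac{\sqrt5}{4}\eta_\sig^p\Psi(w)$; for $|w|\geq 1$, convexity gives $\Psi(w)\geq\Psi(1)|w|$ and the uniform bound $|\eta_\sig^p w|=|d_\sig F_{K\sig}^p|\leq M(\phi,\O)$ gives $(\eta_\sig^p w)^2\leq M\eta_\sig^p|w|\leq\tfrac{M}{\Psi(1)}\eta_\sig^p\Psi(w)$. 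With this, the leftover disappears, the $(\phi_K-\phi_{K\sig})^2$ term is summed exactly as you say via $\sum_\sig m_\sig d_\sig=d\,m_\O$ (no mesh regularity needed), and your argument closes.
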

\begin{proof}
Combining Proposition~\ref{prop:NRG.disc} with Corollary~\ref{coro:NRG.disc}, we obtain that 
\be\label{eq:dissip*.1}
\tau \sum_{p = 1}^n \Dd_\Ee^*(\brho^p, \bG^p)
= \sum_{p = 1}^n \tau \sum_{\sig \in \Ee} a_\sig \eta_\sig^p  \Psi^*(G_{K\sig}^p) \leq \Ff_{\Tt, \text{tot}}^0 - \Ff_{\Tt, \text{tot}}^n \leq  \Ff_\Tt(\brho^0) + \cter{cte:Ftot} t_n. 
\ee
It follows from the elementary inequality $\cosh(a+b) = \cosh(a) \cosh(b) + \sinh(a) \sinh(b)$ that 
\begin{multline*}
\eta_\sig^p  \Psi^*(G_{K\sig}^p) = 4 \eta_\sig^p \left(\cosh\left(\frac{\phi_K-\phi_{K\sig}}2\right)  \cosh\left(\frac{h'(\rho_K^p)-h'(\rho_{K\sig}^p)}2\right) - 1 \right) \\
+ 4 \eta_\sig^p \sinh\left(\frac{\phi_K-\phi_{K\sig}}2\right)  \sinh\left(\frac{h'(\rho_K^p)-h'(\rho_{K\sig}^p)}2\right)=: S_{\sig}^p + T_\sig^p, \qquad p \geq 1. 
\end{multline*}
Then using~\eqref{eq:id.cosh}, $\cosh(a) \geq 1$, and the fact that the arithmetic mean is greater than the geometric one, one gets that 
\[
 S_{\sig}^p = 2 \cosh\left(\frac{\phi_K-\phi_{K\sig}}2\right) \left( (\rho_K^p - \rho_{K\sig}^p)^2 + \eta(\rho_K^p) + \eta(\rho_{K\sig}^p) - 2 \eta_\sig^p\right) \geq 2 (\rho_K^p - \rho_{K\sig}^p)^2.
\]
On the other hand, \eqref{eq:id.sinh} yields 
\[
T_{\sig}^p = 2 (\rho_K^p - \rho_{K\sig}^p) \sinh\left(\frac{\phi_K-\phi_{K\sig}}2\right). 
\]
Since 
\[
\left|2 \sinh\left(\frac{\phi_K-\phi_{K\sig}}2\right)\right| \leq \cosh\left(\frac{{\|\phi\|_\infty}}2\right) |\phi_K - \phi_{K\sig}| \leq  \cosh\left(\frac{{\|\phi\|_\infty}}2\right) {\| \grad \phi\|}_\infty d_\sig, 
\]
we deduce from Young's inequality that 
\[
T_\sig^p \geq - (\rho_K^p - \rho_{K\sig}^p)^2 -  \cosh^2\left(\frac{{\|\phi\|_\infty}}2\right) {\| \grad \phi\|}^2_\infty d_\sig^2.
\]
All in all, we obtain 
\begin{align*}
\tau \sum_{p = 1}^n \Dd_\Ee^*(\brho^p, \bG^p) \geq& \;  \sum_{p=1}^n \tau \sum_{\sig \in \Ee} a_\sig \left( \rho_K^p - \rho_{K\sig}^p \right)^2 
-  \cosh^2\left(\frac{{\|\phi\|_\infty}}2\right) {\| \grad \phi\|}^2_\infty \sum_{p=1}^n \tau \sum_{\sig \in \Ee} m_\sig d_\sig \\
= & \; \sum_{p=1}^n \tau \sum_{\sig \in \Ee} a_\sig \left( \rho_K^p - \rho_{K\sig}^p \right)^2  -  \cosh^2\left(\frac{{\|\phi\|_\infty}}2\right) {\| \grad \phi\|}^2_\infty d m_\O t_n, 
\end{align*}
which provides the desired result after being combined with~\eqref{eq:dissip*.1}.
\end{proof}

Next lemma exploits the other part of the dissipation to derive some discrete $W^{1,1}_\text{loc}(\R_+; W^{-1,1}(\O))$ on $\left(\brho^n\right)_{n\geq 0}$.
\begin{lem}\label{lem:dissip}
Let $\varphi \in C^\infty_c([0,T)\times \O)$ for some $T>0$, with $\text{dist}(\text{supp}\ \varphi, \, \p\O) \geq \zeta_\star\delta_\Tt$, then define 
$\varphi_K^n= \frac1{m_K}\int_K \varphi(t_n)$ for all $K\in\Tt$ and $n\geq 0$, then there exists $\ctel{cte:L1W-11}$ 
depending only on $\O$, $\alpha$, $\beta$, $\phi$, $T$, and $\zeta_\star$ such that 
\[
\sum_{n\geq 1} \sum_{K\in\Tt} m_K \left(\rho_K^n - \rho_K^{n-1} \right) \varphi_K^{n} \leq \cter{cte:L1W-11} {\|\grad \varphi\|}_\infty. 
\]
\end{lem}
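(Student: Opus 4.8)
The plan is to use the discrete conservation law~\eqref{eq:cons.1} to rewrite the sum as a sum over edges of products of fluxes with discrete gradients of $\varphi$, and then control the fluxes using the dissipation estimate already extracted in Proposition~\ref{prop:NRG.disc}. First, I would fix $n\geq 1$, multiply~\eqref{eq:cons.1} by $\tau\varphi_K^n$ and sum over $K\in\Tt$; a discrete integration by parts (reorganizing the double sum over $(K,\sig)$ into a sum over faces, using $F_{K\sig}^n + F_{L\sig}^n=0$ for interior faces and the fact that $\varphi$ vanishes near $\p\O$ so that boundary faces contribute nothing once $\text{dist}(\text{supp}\,\varphi,\p\O)\geq \zeta_\star \delta_\Tt$) yields
\[
\sum_{K\in\Tt} m_K\left(\rho_K^n - \rho_K^{n-1}\right)\varphi_K^n = \tau \sum_{\sig = K|L \in \Ee_\text{int}} m_\sig F_{K\sig}^n \left(\varphi_L^n - \varphi_K^n\right).
\]
Summing over $n\geq 1$ (the sum is finite since $\varphi$ has compact support in time) reduces the problem to bounding $\sum_n \tau \sum_\sig m_\sig |F_{K\sig}^n|\,|\varphi_L^n - \varphi_K^n|$. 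Since $|\varphi_L^n - \varphi_K^n| \leq \|\grad\varphi\|_\infty\, d_\sig$ by the regularity of $\varphi$ (the segment $[x_K,x_L]$ crossing $\sig$, up to a harmless adjustment absorbed in the constant), it suffices to bound $\sum_n \tau \sum_{\sig\in\Ee} m_\sig d_\sig |F_{K\sig}^n|$ by a constant depending only on the allowed data.

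For that last bound I would return to the primal dissipation potential~\eqref{eq:Dd}. Write $m_\sig d_\sig |F_{K\sig}^n| = m_K d_\sig \cdot d_\sig |F_{K\sig}^n| / \text{something}$... more precisely, $m_\sig d_\sig |F_{K\sig}^n| = d_\sig^2\, a_\sig\, |F_{K\sig}^n| = d_\sig^2 a_\sig \eta_\sig^n \cdot \bigl|d_\sig F_{K\sig}^n/\eta_\sig^n\bigr|/d_\sig$, so $m_\sig d_\sig |F_{K\sig}^n| = d_\sig\, a_\sig\, \eta_\sig^n\, |z_\sig^n|$ with $z_\sig^n = d_\sig F_{K\sig}^n/\eta_\sig^n$. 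Since $\eta_\sig^n \leq 1/4$ and $d_\sig \leq \delta_\Tt$, and since $\Psi$ has superlinear growth with $\Psi(z)/|z|\to\infty$, I would use a Young-type splitting: for any $\lambda>0$, $|z| \leq \Psi(z) + \Psi^*_{(\lambda)}$ where one exploits that $|z|\le 1 + \Psi(z)$ pointwise (indeed $\Psi(z)\ge |z|-C$ for some universal $C$ because $\Psi$ is superlinear and finite near $0$; here $\Psi^*(s)=4(\cosh(s/2)-1)$ is finite everywhere so $\Psi$ is genuinely superlinear). Hence $a_\sig \eta_\sig^n |z_\sig^n| \leq a_\sig \eta_\sig^n\bigl(\Psi(z_\sig^n) + C\bigr) \leq a_\sig\eta_\sig^n\Psi(z_\sig^n) + \tfrac{C}{4} a_\sig$. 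Multiplying by $d_\sig$, summing, and noting $d_\sig a_\sig = m_\sig$ gives
\[
\sum_{n\geq1}\tau\sum_{\sig\in\Ee} m_\sig d_\sig |F_{K\sig}^n| \leq \delta_\Tt \sum_{n\geq1}\tau\, \Dd_\Ee(\brho^n,\bF^n) + \tfrac{C}{4}\,m_\O\!\!\sum_{0\le n\le T/\tau}\!\!\tau,
\]
and the first term is bounded by $\delta_\Tt(\Ff_\Tt(\brho^0) + \cter{cte:Ftot}t_n)$ via Proposition~\ref{prop:NRG.disc} summed in time, hence uniformly bounded using~\eqref{eq:Ff.init} and $\delta_\Tt\le\text{diam}(\O)$; the second is $\le \tfrac{C}{4}m_\O(T+1)$.

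The main obstacle I anticipate is the boundary-face bookkeeping: one must be careful that the condition $\text{dist}(\text{supp}\,\varphi,\p\O)\geq \zeta_\star\delta_\Tt$ genuinely forces $\varphi_K^n=0$ for every cell $K$ touching $\p\O$ (so that no boundary flux $F_{K\sig}^n$, which is only $L^\infty$-bounded rather than dissipation-controlled, enters the estimate), and this uses $\text{diam}(K)\le \zeta_\star d_\sig \le \zeta_\star \delta_\Tt$. A secondary technical point is justifying the pointwise bound $|z|\le 1+\Psi(z)$ with an explicit universal constant, and checking $\eta_\sig^n\le 1/4$ uniformly; both are elementary. Everything else — the discrete integration by parts, the telescoping in $n$, and assembling the constant $\cter{cte:L1W-11}$ from $\O,\alpha,\beta,\phi,T,\zeta_\star$ — is routine.
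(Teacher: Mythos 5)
Your overall strategy is the same as the paper's: multiply \eqref{eq:cons.1} by $\tau\varphi_K^n$, perform the discrete integration by parts (with the support condition killing all boundary faces and the cell-average definition of $\varphi_K^n$ giving $|\varphi_K^n-\varphi_L^n|\le (1+2\zeta_\star)\|\grad\varphi\|_\infty d_\sig$), and then control $\sum_n\tau\sum_\sig a_\sig\,\eta_\sig^n\, |z_\sig^n|\, d_\sig$ with $z_\sig^n=d_\sig F_{K\sig}^n/\eta_\sig^n$ through the primal dissipation. However, the last step of your estimate contains a genuine gap. After the (correct) pointwise bound $|z|\le\Psi(z)+C$ with $C=\Psi^*(1)$, the remainder you produce per edge and per time step is $\tfrac C4\, d_\sig a_\sig=\tfrac C4\, m_\sig$, so the total remainder is $\tfrac C4\sum_{\sig\in\Ee}m_\sig$ per time step — and you silently replaced $\sum_{\sig\in\Ee}m_\sig$ by $m_\O$. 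This is false: $\sum_\sig m_\sig$ is the total $(d-1)$-measure of the mesh skeleton, which is not controlled by $m_\O$ and blows up like $d\,m_\O/\delta_\Tt$ under refinement (it is $\sum_\sig m_\sig d_\sig$, not $\sum_\sig m_\sig$, that is comparable to $d\,m_\O$ via $m_K=\frac1d\sum_{\sig\in\Ee_K}m_\sig d_{K\sig}$). Hence your "constant" depends on the mesh, which defeats the purpose of the lemma, used precisely in the limit $\delta_\Tt,\tau\to0$.

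The repair is to keep the factor $d_\sig$ inside the Young--Fenchel inequality instead of spending it on the dissipation term: write $|z_\sig^n|\,d_\sig\le\Psi(z_\sig^n)+\Psi^*(d_\sig)$ and use $\Psi^*(d_\sig)=4\left(\cosh(d_\sig/2)-1\right)\le \tfrac12\cosh\!\left(\tfrac{\text{diam}(\O)}2\right) d_\sig^2$, so that the remainder becomes $\sum_\sig a_\sig\eta_\sig^n\Psi^*(d_\sig)\le \tfrac18\cosh\!\left(\tfrac{\text{diam}(\O)}2\right)\sum_\sig m_\sig d_\sig\le \tfrac d8\cosh\!\left(\tfrac{\text{diam}(\O)}2\right)m_\O$, uniformly in the mesh; this is exactly what the paper does (your extra factor $\delta_\Tt$ in front of $\sum_n\tau\,\Dd_\Ee(\brho^n,\bF^n)$ is harmless but unnecessary). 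The remaining ingredients of your proposal — the boundary bookkeeping via $\text{diam}(K)\le\zeta_\star d_\sig\le\zeta_\star\delta_\Tt$, the bound $\eta_\sig^n\le 1/4$, and the control of $\sum_n\tau\,\Dd_\Ee(\brho^n,\bF^n)$ by Proposition~\ref{prop:NRG.disc}, Corollary~\ref{coro:NRG.disc} and \eqref{eq:Ff.init} — are sound and coincide with the paper's argument.
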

\begin{proof}
The assumption on the support of $\varphi$ implies that $\varphi_K^n = 0$ either if $K$ has a boundary edge $\sig \in \Ee_{K,\text{ext}}$ or 
if $n \geq T/\tau$. Moreover, it follows from the mean value theorem that for all $K\in \Tt$ and all $n \geq 1$, there exists $y_K^n \in K$ such that 
$\varphi_K^n = \varphi(t_n,y_K^n)$. Then for all $\sig = K|L \in \Ee_\text{int}$, one has 
\[
\left|\varphi_K^n - \varphi_L^n\right| \leq {\|\grad \varphi\|}_\infty \left(|y_K^n - x_K| + |y_L^n - x_L| + d_\sig\right) \leq \cter{cte:reg}  {\|\grad \varphi\|}_\infty d_\sig.
\]
with $\ctel{cte:reg} = 1 + 2 \zeta_\Tt$.
Therefore, multiplying~\eqref{eq:cons.1} by $\tau \varphi_K^n$ and summing over $K\in\Tt$ and $n\geq 1$ provides 
\begin{align*}
\sum_{n\geq 1} \sum_{K\in\Tt} m_K \left(\rho_K^n - \rho_K^{n-1} \right) \varphi_K^{n} =\; &
- \sum_{n= 1}^{\lfloor T/\tau\rfloor} \tau \sum_{\sig \in \Ee_\text{int}} m_\sig F_{K\sig}^n \left( \varphi_K^n - \varphi_L^n\right). \\
\leq \; &\cter{cte:reg}  {\|\grad \varphi\|}_\infty \sum_{n= 1}^{\lfloor T/\tau\rfloor} \tau  \sum_{\sig \in \Ee_\text{int}}  a_\sig \eta_\sig^n  \frac{d_\sig \left| F_{K\sig}^n\right|}{\eta_\sig^n} d_\sig, 
\end{align*}
so that a Young-Fenchel inequality gives 
\be\label{eq:dtrho.1}
\sum_{n\geq 1} \sum_{K\in\Tt} m_K \left(\rho_K^n - \rho_K^{n-1} \right) \varphi_K^{n}  \leq \cter{cte:reg}   {\|\grad \varphi\|}_\infty  \sum_{n= 1}^{\lfloor T/\tau\rfloor} \tau  \left(
\Dd_\Ee(\brho^n, \bF^n) + \sum_{\sig \in \Ee_\text{int}}  a_\sig \eta_\sig^n  \Psi^*(d_\sig) \right).
\ee
A Taylor expansion of $\Psi^*$ around $0$ shows that 
\[
\Psi^*(d_\sig)  = \frac{d_\sig^2}2 \left(\Psi^*\right)''(c_\sig) \quad \text{with}\; c_\sig \in (0,d_\sig) \subset [0, \text{diam}(\O)], 
\]
whence, since $\eta_\sig^n \leq 1/4$, 
\be\label{eq:dtrho.2}
\sum_{\sig \in \Ee_\text{int}}  a_\sig \eta_\sig^n  \Psi^*(d_\sig) \leq \frac18 \cosh(\frac{\text{diam}(\O)}2) \sum_{\sig \in \Ee_\text{int}} m_\sig d_\sig 
\leq \frac d8 \cosh(\frac{\text{diam}(\O)}2)m_\O.
\ee
Then we deduce from Proposition~\ref{prop:NRG.disc} and Corollary~\ref{coro:NRG.disc} that 
\be\label{eq:dtrho.3}
 \sum_{n= 1}^{\lfloor T/\tau\rfloor} \tau \Dd_\Ee(\brho^n, \bF^n) \leq \Ff_{\Tt}(\brho^0) -  \Ff_{\Tt,\text{tot}}^{\lfloor T/\tau\rfloor} \leq 
 m_\O \left( \log 2 + \|\phi\|_\infty + 2 \|\grad \phi\|_\infty \delta_\Tt\right) + \cter{cte:Ftot} T. 
\ee
The combination of~\eqref{eq:dtrho.2}\eqref{eq:dtrho.3} in \eqref{eq:dtrho.1} shows the desired result. 
\end{proof}

% ================================================================
%					Convergence analysis
% ================================================================
\section{Convergence analysis}\label{sec:convergence}

The goal of this section is to prove Theorem~\ref{thm:conv}. The proof consists in three steps. 
First in Section~\ref{ssec:compact}, we establish some compactness results on 
$\left(\rho_{\Tt_m, \tau_m}\right)_{m\geq 0}$. Then we identify in Section~\ref{ssec:identify} 
any limit value $\rho$ of $\left(\rho_{\Tt_m, \tau_m}\right)_{m\geq 0}$ as a weak solution to the continuous problem. 
Finally, the uniqueness of the weak solution is established in Section~\ref{ssec:uniqueness}, 
implying by the way the convergence of the whole sequence. 

In what follows, we lighten the notation  by removing the index $m$ associated to the mesh and time step. 
The limit $m\to+\infty$ is denoted by $\delta_\Tt, \tau \to 0$ instead. This limit implicitly supposes that 
the regularity factor of the mesh $\zeta_\Tt$ remains uniformly bounded by some $\zeta_\star$ as prescribed by~\eqref{eq:mtoinfty}. 

\subsection{Compactness properties}\label{ssec:compact}

We derived in Section~\ref{ssec:NRG.disc} all the preliminary material required to use some existing compactness 
results. First by combining Lemmas~\ref{lem:dissip*} and \ref{lem:dissip}, one can apply the black-box discrete 
Aubin-Simon theorem \cite[Theorem 3.9]{ACM17}, leading to the following compactness result. 

\begin{prop}\label{prop:compact}
Let $\rho$ be a limit value~\eqref{eq:rho} of $\rho_{\Tt,\tau}$ as $\delta_\Tt$, $\tau$ tend to $0$, 
then $\rho \in L^2_\text{loc}(\R_+;H^1(\O))$ and, up to a subsequence, 
\be\label{eq:conv.Lp}
\rho_{\Tt,\tau}  \underset{\delta_\Tt, \tau \to 0}\longrightarrow \rho \quad\text{in}\; L^p_\text{loc}(\R_+\times\O).
\ee
\end{prop}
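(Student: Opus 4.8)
The plan is to obtain compactness for the space-time reconstruction $\rho_{\Tt,\tau}$ by invoking a discrete Aubin--Simon type argument. The two inputs needed are: (a) a uniform-in-$(\delta_\Tt,\tau)$ bound on a discrete spatial seminorm of $\rho_{\Tt,\tau}$ that is coercive enough to provide compactness in space (a discrete $L^2_{\mathrm{loc}}(\R_+;H^1(\Omega))$ bound), and (b) a uniform estimate on a discrete time-translate, i.e.\ a bound in some negative Sobolev norm on the discrete time derivative, providing equicontinuity in time. Input (a) is exactly Lemma~\ref{lem:dissip*}, which controls $\sum_{p=1}^n \tau \sum_{\sig\in\Ee} a_\sig (\rho_K^p-\rho_{K\sig}^p)^2$ by $\cter{cte:L2H1}(1+t_n)$; this is the standard discrete $H^1$ dissipation quantity. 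Input (b) is provided by Lemma~\ref{lem:dissip}, which bounds $\sum_{n\geq 1}\sum_K m_K(\rho_K^n-\rho_K^{n-1})\varphi_K^n$ by $\cter{cte:L1W-11}\|\grad\varphi\|_\infty$ for test functions $\varphi$ supported away from $\p\Omega$; this is precisely a discrete $W^{1,1}_{\mathrm{loc}}(\R_+;W^{-1,1}(\Omega))$-type estimate on $\left(\brho^n\right)_n$. Since $\|\brho^n\|_\infty\leq 1$, we also have the trivial $L^\infty$ (hence $L^1$ on bounded sets) bound needed to fit the framework.

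The execution is then essentially a citation: feed these two estimates into the black-box discrete Aubin--Simon compactness theorem of~\cite[Theorem 3.9]{ACM17}. One must check that the discrete functional spaces and the associated norms in that abstract statement match the ones controlled here: the sequence $\left(\rho_{\Tt,\tau}\right)$ is bounded in $L^\infty$, its spatial discrete gradient is bounded in $L^2_{\mathrm{loc}}(\R_+;L^2)$ by Lemma~\ref{lem:dissip*}, and its discrete time derivative is bounded in the dual space $L^1_{\mathrm{loc}}(\R_+;W^{-1,1})$ by Lemma~\ref{lem:dissip}. The compact embedding chain $H^1(\Omega)\hookrightarrow\hookrightarrow L^2(\Omega)\hookrightarrow W^{-1,1}(\Omega)$ underlies the abstract result, and the conclusion is relative compactness of $\left(\rho_{\Tt,\tau}\right)$ in $L^2_{\mathrm{loc}}(\R_+\times\Omega)$; extracting a subsequence converging strongly there, and using that the weak-$\star$ limit in $L^\infty$ is unique, identifies the limit with the $\rho$ of~\eqref{eq:rho} and yields~\eqref{eq:conv.Lp}. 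That $\rho$ itself lies in $L^2_{\mathrm{loc}}(\R_+;H^1(\Omega))$ follows by lower semicontinuity: the uniform discrete $H^1$ bound of Lemma~\ref{lem:dissip*} passes to the limit on any interior subdomain against the weak/strong convergence, and one lets the subdomain exhaust $\Omega$.

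The mild technical point — not really an obstacle, but the place requiring a little care — is reconciling the test-function restriction $\mathrm{dist}(\mathrm{supp}\,\varphi,\p\Omega)\geq\zeta_\star\delta_\Tt$ in Lemma~\ref{lem:dissip} with the global-in-space compactness claimed here. The abstract theorem only needs the time-equicontinuity estimate to hold against test functions on interior subdomains (with the subdomain fixed, the constraint is satisfied for $\delta_\Tt$ small), and the spatial $H^1$ bound of Lemma~\ref{lem:dissip*} is genuinely global; combining these gives strong $L^2$ compactness on every compact $K\Subset\R_+\times\Omega$, which is exactly the statement of $L^p_{\mathrm{loc}}(\R_+\times\Omega)$ convergence. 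The improvement from $L^2_{\mathrm{loc}}$ to $L^p_{\mathrm{loc}}$ for all $p\in[1,\infty)$ is then immediate from the uniform $L^\infty$ bound $0\leq\rho_{\Tt,\tau}\leq 1$ together with interpolation (strong $L^2$ plus bounded $L^\infty$ gives strong $L^p$ on bounded sets for every finite $p$). The boundary-to-$\ov\Omega$ extension of the convergence, needed later, is not part of this proposition and is deferred.
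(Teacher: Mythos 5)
Your proposal follows exactly the paper's route: the paper proves this proposition by combining Lemma~\ref{lem:dissip*} (the discrete $L^2_\text{loc}(\R_+;H^1)$ estimate) with Lemma~\ref{lem:dissip} (the discrete time-derivative estimate) and invoking the black-box discrete Aubin--Simon theorem of~\cite[Theorem 3.9]{ACM17}, which is precisely what you do. Your additional remarks on the interior test-function restriction, the $L^2\to L^p$ upgrade via the $L^\infty$ bound, and lower semicontinuity for the $H^1$ regularity of the limit are correct elaborations of details the paper leaves implicit.
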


The above proposition shows some strong convergence in the bulk domain $\R_+ \times \O$. 
To pass to the limit in the boundary conditions, one also has to get some convergence of the traces 
on $\R_+\times \p\O$. 
Even though the boundary condition~\eqref{eq:boundary.1} in linear w.r.t. $\rho$, we establish the 
strong convergence of the trace of the approximate solution $\rho_{\Tt,\tau}$ towards 
the trace of $\rho$.

\begin{lem}\label{lem:trace}
Let $\rho_{\Tt,\tau}$ be such that the convergence \eqref{eq:conv.Lp} holds. 
Denote by $\gamma \rho_{\Tt,\tau}$ the trace on $\R_+ \times \G$ of the approximate solution $\rho_{\Tt,\tau}$, i.e. 
\[
\gamma \rho_{\Tt,\tau}(t,x) = \rho_K^n \quad \text{for} \; (t, x) \in (t_{n-1}, t_n] \times \sig, \quad \sig \in \Ee_{K,\text{ext}},\; K \in \Tt, 
\]
and by $\gamma \rho \in L^2_\text{loc}(\R_+;H^{1/2}(\G))$ the trace of a limit value $\rho$ of $\rho_{\Tt,\tau}$, then 
\be\label{eq:conv.trace}
\gamma \rho_{\Tt,\tau} \underset{\delta_\Tt, \tau \to 0}\longrightarrow \gamma \rho \quad \text{in}\; L^p_\text{loc}(\R_+ \times \G), \qquad 1 \leq p < +\oo.
\ee
\end{lem}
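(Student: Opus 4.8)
The plan is to establish the trace convergence by combining the already-established $L^2_{\text{loc}}(\R_+; H^1(\O))$ bound on $\rho$ with a discrete trace inequality that controls the boundary values of $\rho_{\Tt,\tau}$ in terms of bulk quantities. First I would reduce the problem: since $\gamma\rho_{\Tt,\tau}$ and $\gamma\rho$ are uniformly bounded in $L^\infty(\R_+\times\G;[0,1])$, the sequence $(\gamma\rho_{\Tt,\tau})$ is bounded in every $L^p_{\text{loc}}$, so by interpolation it suffices to prove convergence in $L^2_{\text{loc}}(\R_+\times\G)$, or even weak-$\star$ convergence to the correct limit plus convergence of norms.

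Next I would introduce, for a fixed time slice, the auxiliary piecewise-constant-on-cells reconstruction and compare the boundary face value $\rho_K^n$ (for $\sig\in\Ee_{K,\text{ext}}$) to a suitable average of $\rho$ near $\G$. The key tool is a discrete trace lemma of the type $\|\gamma u_\Tt\|_{L^2(\G)}^2 \lesssim \|u_\Tt\|_{L^2(\O)}^2 + \|u_\Tt\|_{L^2(\O)}|u_\Tt|_{1,\Tt}$, where $|u_\Tt|_{1,\Tt}^2 = \sum_{\sig\in\Ee_\text{int}} a_\sig(u_K - u_L)^2$ is the discrete $H^1$ seminorm; such inequalities are standard for TPFA finite volumes (see e.g. the framework of \cite{EGH00}). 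Applied to the difference between $\rho_{\Tt,\tau}$ and a smooth approximation of $\rho$ (mollified and restricted to the mesh), together with Lemma~\ref{lem:dissip*} which bounds $\sum_p\tau\sum_\sig a_\sig(\rho_K^p-\rho_{K\sig}^p)^2$, this yields that the boundary traces of $\rho_{\Tt,\tau}$ cannot drift away from those of the bulk limit. Concretely: for $\psi$ smooth, write $\gamma\rho_{\Tt,\tau} - \gamma\rho = \gamma(\rho_{\Tt,\tau}-\psi_\Tt) + \gamma(\psi_\Tt - \psi) + \gamma(\psi - \rho)$; the first term is controlled in $L^2_{\text{loc}}(\R_+\times\G)$ by the discrete trace inequality applied to $\rho_{\Tt,\tau}-\psi_\Tt$ (bulk $L^2$ norm small by Proposition~\ref{prop:compact}, discrete seminorm bounded uniformly by Lemma~\ref{lem:dissip*} and the smoothness of $\psi$), the second vanishes as $\delta_\Tt\to 0$ by consistency of the projection, and the third is made small by choosing $\psi$ close to $\rho$ in $L^2_{\text{loc}}(\R_+;H^1(\O))$ and invoking continuity of the trace operator.

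I expect the main obstacle to be the careful bookkeeping in the discrete trace inequality when the cell centers $x_K$ need not lie inside $K$, and the fact that the discrete seminorm in Lemma~\ref{lem:dissip*} only sums over interior-type differences $\rho_K^p-\rho_{K\sig}^p$ — one must check that this, rather than a genuine boundary-face contribution, is enough to close the trace estimate. The mesh-regularity bound $\zeta_\Tt\le\zeta_\star$ from~\eqref{eq:mtoinfty} is what makes the geometric constants in the discrete trace inequality uniform, so it will be used essentially here. A minor additional point is handling the time variable: all estimates are summed against $\tau$, so one works on a fixed time interval $(0,T)$ and the bounds from Lemma~\ref{lem:dissip*} give an $O(1+T)$ control, which is harmless for $L^p_{\text{loc}}$ convergence. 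Finally, once $L^2_{\text{loc}}(\R_+\times\G)$ convergence is in hand, the $L^\infty$ bound upgrades it to $L^p_{\text{loc}}$ for all $p<\infty$ by interpolation, giving~\eqref{eq:conv.trace}.
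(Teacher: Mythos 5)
Your strategy is viable and genuinely different from the paper's. The paper does not go through a smooth approximation of $\rho$ at all: it decomposes the polyhedral boundary into flat pieces $\G_i$, thickens them into strips $\G_{i,\eps}$, and splits $\gamma\rho_{\Tt,\tau}-\gamma\rho$ into three averaged-in-$\theta$ terms comparing (a) the discrete trace with the discrete solution at the interior points $y-\theta\nu_i$, (b) the discrete solution with $\rho$ at those interior points, and (c) $\rho$ at those points with its own trace. Term (a) is handled by the translation estimate of Lemma~4.8 in \cite{BCH13} combined with Lemma~\ref{lem:dissip*}, giving a bound of order $\eps+\delta_\Tt$; term (b) vanishes for fixed $\eps$ by Proposition~\ref{prop:compact}; term (c) vanishes as $\eps\to0$ because $\gamma\rho$ is the trace of an $L^2_\text{loc}(\R_+;H^1(\O))$ function; one then sends $\delta_\Tt,\tau\to0$ first and $\eps\to0$ second. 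Your route replaces (c) by density of smooth functions plus continuity of the trace operator, and replaces (a) by a discrete trace inequality, which is a legitimate alternative. Two caveats deserve emphasis. First, as you implicitly realize by writing the inequality in the form $\|\gamma u_\Tt\|_{L^2(\G)}^2\lesssim\|u_\Tt\|_{L^2(\O)}^2+\|u_\Tt\|_{L^2(\O)}|u_\Tt|_{1,\Tt}$, the multiplicative (interpolation) form is essential: Lemma~\ref{lem:dissip*} only gives \emph{boundedness} of the time-integrated seminorm, not smallness, so the additive form $\|\gamma u_\Tt\|\lesssim\|u_\Tt\|_{L^2}+|u_\Tt|_{1,\Tt}$ found in parts of the literature would not close the argument. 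Second, the standard references for this inequality (including the framework of \cite{EGH00}) assume $x_K\in\ov K$, i.e.\ $d_{K\sig}\geq 0$, whereas Definition~\ref{def:mesh} here explicitly allows cell centers outside their cells; so you cannot invoke it as a black box, and re-proving it for these meshes amounts to the same boundary-layer sliding argument that the paper imports from \cite{BCH13}. In short, your approach buys a more modular presentation (trace inequality plus density), while the paper's buys a self-contained argument that avoids any auxiliary smooth approximant and works directly with the limit $\rho$; the underlying discrete mechanism — controlling near-boundary translates by $\sum_n\tau\sum_\sig a_\sig(\rho_K^n-\rho_{K\sig}^n)^2$ — is the same in both.
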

\begin{proof}
The proof builds on ideas introduced in Section 4.2 of \cite{BCH13}. First, notice that since both $\rho_{\Tt,\tau}$ and 
$\rho$ remain bounded between $0$ and $1$, is suffices to establish the convergence \eqref{eq:conv.trace} 
in $L^2_\text{loc}(\R_+ \times \G)$ to get it all the $L^p_\text{loc}(\R_+ \times \G)$ thanks to the dominated convergence theorem.

Since $\O$ is assumed to be polyhedral, its boundary $\G$ can be decomposed as $\G = \bigcup_{i=1}^I \G_i$ 
with $\G_i$ included in an hyperplane of $\R^d$ and $I$ finite. We assume that the $\G_i$ are disjointed one from another. 
For $\eps>0$ and $i\in\{1,\dots, I\}$, we define 
\[
\G_{i,\eps} = \{x \in \G_i \; | \; x - \theta \nu_i \in \O \; \text{for}\; \theta \in [0,\eps)\}, 
\]
where $\nu_i$ is the outward w.r.t. $\O$ normal to $\G_i$. Denoting by $m_{\G_{i,\eps}}$ (resp. $m_{\G_i}$)  the 
$(d-1)$-dimensional Hausdorff (or Lebesgue) measure of $\G_{i,\eps}$ (resp. $\G_i$), then 
\[
m_{\G_i} - \cter{cte:mG} \eps \leq m_{\G_{i,\eps}} \leq m_{\G,i}, \qquad \eps >0, \; 1 \leq i \leq I, 
\]
for some $\ctel{cte:mG}$ depending only on $\O$.
Therefore, given an arbitrary final time $T>0$ and an arbitrary $\eps>0$, then for all $i\in\{1,\dots, I\}$, there holds 
\be\label{eq:trace.1}
\int_0^T \int_{\G_i} \left| \gamma \rho_{\Tt,\tau}  - \gamma \rho \right|^2 \leq \int_0^T \int_{\G_{i,\eps}} \left| \gamma \rho_{\Tt,\tau}  - \gamma \rho \right|^2 + \cter{cte:mG} \eps T.
\ee
Using $(a+b+c)^2 \leq 3 (a^2 + b^2 + c^2)$, we obtain that 
\be\label{eq:trace.2}
 \int_0^T \int_{\G_{i,\eps}} \left| \gamma \rho_{\Tt,\tau}(t,y)  - \gamma \rho(t,y) \right|^2 dy dt  \leq A_{\Tt,\tau}^\eps + B_{\Tt,\tau}^\eps  + C^\eps, 
\ee
with 
\begin{align*}
A_{\Tt,\tau}^\eps = \;& \frac3\eps \int_0^T \int_0^\eps \int_{\G_{i,\eps}} \left| \gamma \rho_{\Tt,\tau}(t,y) -  \rho_{\Tt,\tau}(t,y - \theta \nu_i) \right|^2 dy d\theta dt, \\
B_{\Tt,\tau}^\eps = \;& \frac3\eps \int_0^T \int_0^\eps \int_{\G_{i,\eps}} \left| \rho_{\Tt,\tau}(t,y - \theta \nu_i)-  \rho(t,y - \theta \nu_i) \right|^2dy d\theta dt, \\
C^\eps = \;&  \frac3\eps\int_0^T \int_0^\eps \int_{\G_{i,\eps}} \left| \gamma \rho(t,y) -  \rho(t,y - \theta \nu_i) \right|^2 dy d\theta dt. \\
\end{align*}
First, applying Lemma 4.8 of \cite{BCH13} in combination with Lemma~\ref{lem:dissip*} yields 
\be\label{eq:trace.3}
A_{\Tt,\tau}^\eps \leq 3 (\eps + \delta_\Tt) \sum_{n=1}^{\lceil T/\tau \rceil} \tau \sum_{\sig \in \Ee_\text{int}} a_\sig \left(\rho_K^n - \rho_L^n\right)^2 
\leq 3 \cter{cte:L2H1} (1+T+\tau) (\eps + \delta_\Tt).
\ee
Second, it results from Proposition~\ref{prop:compact} that, for any fixed $\eps >0$, there holds 
\be\label{eq:trace.4}
\lim_{\delta_\Tt, \tau \to 0} B_{\Tt,\tau}^\eps = 0. 
\ee
Putting~\eqref{eq:trace.1}--\eqref{eq:trace.4} altogether leads to 
\be\label{eq:trace.5}
\limsup_{\delta_\Tt, \tau \to 0} \int_0^T \int_{\G_i} \left| \gamma \rho_{\Tt,\tau}  - \gamma \rho \right|^2 \leq \big( \cter{cte:mG} T +  3 \cter{cte:L2H1}(1+T) \big) \eps +C^\eps, 
\qquad \forall \eps >0. 
\ee
Eventually, one lets $\eps\to 0$ in~\eqref{eq:trace.5}, the right-hand side of which and in particular $C^\eps$ tend to $0$ since $\gamma \rho$ is the trace of $\rho$. 
This concludes the proof of Lemma~\ref{lem:trace}. 
\end{proof}

Even though the term trace is slightly abusive, it is natural to introduce the alternative notion of trace on $\R_+\times \G$ for the approximate solution $\rho_{\Tt,\tau}$ by setting 
\[
\wt \gamma \rho_{\Tt,\tau}(t,x) = \rho_\sig^n \quad \text{for} \; (t, x) \in (t_{n-1}, t_n] \times \sig, \quad \sig \in \Ee_\text{ext}.
\]
\begin{lem}\label{lem:trace.2}
Let $\rho_{\Tt,\tau}$ be such that the convergence \eqref{eq:conv.Lp} holds, then for all $T>0$, there holds
\be\label{eq:conv.trace.2}
\left\| \gamma  \rho_{\Tt,\tau} - \wt \gamma \rho_{\Tt,\tau}\right\|_{L^p((0,T)\times \G)}  \underset{\delta_\Tt, \tau \to 0}\longrightarrow 0, \qquad 1 \leq p < +\oo.
\ee
In particular, $\wt \gamma \rho_{\Tt,\tau}$ also tends to $\gamma \rho$ in $L^p_\text{loc}(\R_+ \times \G)$ for all finite $p$. 
\end{lem}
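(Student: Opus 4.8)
The plan is to estimate the left-hand side of~\eqref{eq:conv.trace.2} directly by comparing, on each boundary face $\sig \in \Ee_\text{ext}$ at each time step $n$, the cell value $\rho_K^n$ with the edge value $\rho_\sig^n$ defined by~\eqref{eq:rhosign}. Since both quantities lie in $[0,1]$, it again suffices (by dominated convergence, exactly as at the start of the proof of Lemma~\ref{lem:trace}) to treat the case $p=2$, so that we must show $\sum_{n=1}^{\lceil T/\tau\rceil} \tau \sum_{\sig \in \Ee_\text{ext}} m_\sig \left(\rho_K^n - \rho_\sig^n\right)^2 \to 0$. The natural tool is Lemma~\ref{lem:dissip*}, which controls $\sum_p \tau \sum_{\sig \in \Ee} a_\sig (\rho_K^p - \rho_{K\sig}^p)^2$, and in particular its boundary part $\sum_p \tau \sum_{\sig \in \Ee_\text{ext}} a_\sig (\rho_K^p - \rho_\sig^p)^2 \leq \cter{cte:L2H1}(1+t_n)$.

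First I would rewrite, for $\sig \in \Ee_\text{ext}$, $m_\sig = a_\sig d_\sig$, so that $\sum_{\sig\in\Ee_\text{ext}} m_\sig (\rho_K^n - \rho_\sig^n)^2 = \sum_{\sig\in\Ee_\text{ext}} d_\sig\, a_\sig (\rho_K^n - \rho_\sig^n)^2 \leq \delta_\Tt \sum_{\sig\in\Ee_\text{ext}} a_\sig (\rho_K^n - \rho_\sig^n)^2$, using $d_\sig \leq \text{diam}(K) \leq \delta_\Tt$. Summing over $n$ from $1$ to $\lceil T/\tau\rceil$ and applying Lemma~\ref{lem:dissip*} then gives
\[
\left\| \gamma \rho_{\Tt,\tau} - \wt\gamma\rho_{\Tt,\tau}\right\|_{L^2((0,T)\times\G)}^2 \leq \delta_\Tt \, \cter{cte:L2H1}\left(1 + T + \tau\right),
\]
which tends to $0$ as $\delta_\Tt,\tau\to0$. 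This proves~\eqref{eq:conv.trace.2} for $p=2$, hence for all finite $p$. The last assertion is then immediate from the triangle inequality $\|\wt\gamma\rho_{\Tt,\tau} - \gamma\rho\|_{L^p_\text{loc}} \leq \|\wt\gamma\rho_{\Tt,\tau} - \gamma\rho_{\Tt,\tau}\|_{L^p_\text{loc}} + \|\gamma\rho_{\Tt,\tau} - \gamma\rho\|_{L^p_\text{loc}}$ combined with Lemma~\ref{lem:trace}.

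I do not expect any serious obstacle here: the whole argument is a one-line consequence of Lemma~\ref{lem:dissip*} together with the geometric identity $m_\sig = a_\sig d_\sig$ and the bound $d_\sig \leq \delta_\Tt$ for exterior faces. The only mild point of care is to make sure the boundary part of the sum in Lemma~\ref{lem:dissip*} is genuinely available — but since that lemma sums over all of $\Ee = \Ee_\text{int}\cup\Ee_\text{ext}$ with the convention $\rho_{K\sig}^p = \rho_\sig^p$ for $\sig\in\Ee_\text{ext}$, and every term is nonnegative, dropping the interior faces is harmless. If one wanted to avoid even invoking the full strength of Lemma~\ref{lem:dissip*}, an alternative is to read off from the explicit formula~\eqref{eq:rhosign} that $|\rho_K^n - \rho_\sig^n|$ is controlled by $|\phi_K - \phi_\sig| \leq \|\grad\phi\|_\infty d_\sig$ plus the discrepancy forced by the Robin data, but the dissipation-based route above is shorter and is the one I would write.
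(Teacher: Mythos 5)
Your proof is correct and follows essentially the same route as the paper: the paper reduces to $p=1$ and applies Cauchy--Schwarz to split the sum into the boundary part of the dissipation estimate of Lemma~\ref{lem:dissip*} times $\sum_n \tau \sum_{\sig\in\Ee_\text{ext}} m_\sig d_\sig$, whereas you reduce to $p=2$ and factor $m_\sig = a_\sig d_\sig$ directly, which uses exactly the same two ingredients. One small correction: the bound $d_\sig \leq \text{diam}(K)$ is not justified in this setting, since $x_K$ is not assumed to belong to $K$; instead use the regularity factor~\eqref{eq:mesh.reg} to write $d_\sig \leq \zeta_\Tt\,\text{diam}(K) \leq \zeta_\star \delta_\Tt$, as the paper does, which only changes the constant and not the conclusion.
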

\begin{proof}
Once again, the uniform $L^\oo$ bounds on $ \gamma  \rho_{\Tt,\tau}$ and $\wt \gamma  \rho_{\Tt,\tau}$ allow to establish \eqref{eq:conv.trace.2} for $p=1$ only. 
Then, going back to the definitions of $ \gamma  \rho_{\Tt,\tau}$ and $\wt \gamma  \rho_{\Tt,\tau}$, Cauchy-Schwarz inequality gives
\begin{align*}
\left\| \gamma  \rho_{\Tt,\tau} - \wt \gamma \rho_{\Tt,\tau}\right\|_{L^1((0,T)\times \G)}^2  \leq\; & 
 \left(\sum_{n=1}^{\lceil T/\tau \rceil} \tau \sum_{K\in\Tt} \sum_{\sig \in \Ee_{K,\text{ext}}} m_\sig \left| \rho_K^n - \rho_\sig^n \right|\right)^2\\ 
 \leq \; &  
 \left(\sum_{n=1}^{\lceil T/\tau \rceil} \tau \sum_{K\in\Tt} \sum_{\sig \in \Ee_{K,\text{ext}}} a_\sig \left( \rho_K^n - \rho_\sig^n \right)^2 \right) 
  \left(\sum_{n=1}^{\lceil T/\tau \rceil} \tau \sum_{K\in\Tt} \sum_{\sig \in \Ee_{K,\text{ext}}} m_\sig d_\sig \right).
\end{align*}
Thanks to Lemma~\ref{lem:dissip*}, the first term of the right-hand side can be overestimated by 
\[
\sum_{n=1}^{\lceil T/\tau \rceil} \tau \sum_{K\in\Tt} \sum_{\sig \in \Ee_{K,\text{ext}}} a_\sig \left( \rho_K^n - \rho_\sig^n \right)^2
\leq \sum_{n=1}^{\lceil T/\tau \rceil} \tau \sum_{\sig \in \Ee} a_\sig \left( \rho_K^n - \rho_{K\sig}^n \right)^2 \leq \cter{cte:L2H1} (1+ T + \tau).
\]
On the other hand, it follows from the regularity of the mesh that 
\[
\sum_{n=1}^{\lceil T/\tau \rceil} \tau \sum_{K\in\Tt} \sum_{\sig \in \Ee_{K,\text{ext}}} m_\sig d_\sig \leq \zeta_\star \delta_\Tt \sum_{n=1}^{\lceil T/\tau \rceil} \tau \sum_{K\in\Tt} \sum_{\sig \in \Ee_{K,\text{ext}}} m_\sig \leq \zeta_\star \delta_\Tt (T+\tau) m_\G
\]
where $m_\G$ denote the $(d-1)$-dimensional Hausdorff (or Lebesgue) measure of $\G$. 
In particular, \eqref{eq:conv.trace.2} holds for $p=1$, and thus also for all finite $p$. The last statement of the lemma, 
namely the convergence of  $\wt \gamma \rho_{\Tt,\tau}$ towards $\gamma \rho$, is then a straightforward consequence of Lemma~\ref{lem:trace}.
\end{proof}

\subsection{Identification of the limit}\label{ssec:identify}
Our goal is here to establish the consistency of the scheme by identifying any limit value $\rho$ of $\rho_{\Tt,\tau}$ as a solution to the continuous problem. 
\begin{prop}\label{prop:identify}
Let $\rho$ be a limit value of $\rho_{\Tt,\tau}$ as $\delta_\Tt,\tau$ tend to $0$, then $\rho$ is a weak solution 
to the problem~\eqref{eq:cons}--\eqref{eq:init} in the sense of Definition~\ref{def:weak}.
\end{prop}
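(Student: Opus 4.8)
The plan is to fix a test function $\varphi\in C^\infty_c([0,T)\times\ov\O)$ and plug the piecewise-constant reconstruction $\varphi_K^n=\varphi(t_n,x_K)$ (or the cell-averaged version) into the scheme~\eqref{eq:cons.1}, then pass to the limit $\delta_\Tt,\tau\to 0$ term by term and recover the weak formulation~\eqref{eq:weak}. Concretely, I would multiply~\eqref{eq:cons.1} by $\tau\varphi_K^{n-1}$ (using the value at the \emph{previous} time so that the discrete time-derivative term produces, after a discrete integration by parts in time, a term of the form $\sum_n\sum_K m_K\rho_K^n(\varphi_K^n-\varphi_K^{n-1})$ plus the initial-data contribution $\sum_K m_K\rho_K^0\varphi_K^0$), sum over $K\in\Tt$ and $n\geq 1$, and reorganize into
\[
T_{\Tt,\tau}^{\rm acc}+T_{\Tt,\tau}^{\rm diff}+T_{\Tt,\tau}^{\rm conv}+T_{\Tt,\tau}^{\rm bd}=0,
\]
where the four pieces are respectively the accumulation term, the diffusive flux term (the $\tfrac{\rho_K^n-\rho_{K\sig}^n}{d_\sig}$ part of~\eqref{eq:FKsig.int.3}), the drift term (the $\tfrac12(\eta(\rho_K^n)+\eta(\rho_{K\sig}^n))\tfrac{\phi_K-\phi_{K\sig}}{d_\sig}$ part), and the boundary term coming from $\sig\in\Ee_\text{ext}$ via~\eqref{eq:FKsig.ext}. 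The key inputs are the strong $L^p_\text{loc}$ convergence $\rho_{\Tt,\tau}\to\rho$ with $\rho\in L^2_\text{loc}(\R_+;H^1(\O))$ from Proposition~\ref{prop:compact}, the trace convergences from Lemmas~\ref{lem:trace} and~\ref{lem:trace.2}, the $L^\infty$ bound $0\le\rho_{\Tt,\tau}\le 1$, the regularity of $\phi$, and the discrete $H^1$ estimate of Lemma~\ref{lem:dissip*}.

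The accumulation term converges to $\iint_{\R_+\times\O}\rho\,\p_t\varphi+\int_\O\rho^0\varphi(0,\cdot)$ by strong $L^1_\text{loc}$ convergence of $\rho_{\Tt,\tau}$ together with a Taylor estimate $|\varphi_K^n-\varphi_K^{n-1}-\tau\p_t\varphi|\lesssim\tau^2$ and a standard consistency estimate for the initial data. For the diffusive term I would rewrite $\sum_{n}\tau\sum_{\sig\in\Ee}m_\sig\tfrac{\rho_K^n-\rho_{K\sig}^n}{d_\sig}(\varphi_K^n-\varphi_{K\sig}^n)$; the discrete gradient of $\rho_{\Tt,\tau}$ converges weakly in $L^2_\text{loc}$ to $\grad\rho$ (this is exactly what underlies Proposition~\ref{prop:compact}), and the discrete gradient of $\varphi$ converges strongly, so the product passes to the limit and gives $\iint\grad\rho\cdot\grad\varphi$; the boundary faces contribute to a separate term and I must be careful that on the cartesian/TPFA setup the discrete gradient pairing is consistent (this uses condition (iii)/(iv) of Definition~\ref{def:mesh} and the geometric identity $m_K=\tfrac1d\sum m_\sig d_{K\sig}$). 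For the drift term, $\tfrac12(\eta(\rho_K^n)+\eta(\rho_{K\sig}^n))$ converges strongly in $L^p_\text{loc}$ to $\eta(\rho)$ by continuity of $\eta$ and the strong convergence of $\rho_{\Tt,\tau}$, while $\tfrac{\phi_K-\phi_{K\sig}}{d_\sig}(\varphi_K^n-\varphi_{K\sig}^n)$ reconstructs (up to $\Oo(\delta_\Tt)$, using $\phi\in W^{1,\infty}$) an approximation of $\grad\phi\cdot\grad\varphi$, giving $\iint\eta(\rho)\grad\phi\cdot\grad\varphi$; one must also control the remainder $R_\sig^n$ of~\eqref{eq:Rsign}: using $|\phi_K-\phi_{K\sig}|\le\|\grad\phi\|_\infty d_\sig$, the bound $\eta\le 1/4$, and Lemma~\ref{lem:dissip*} for the $(\rho_K^n-\rho_{K\sig}^n)^2$ factors, one checks $\sum_n\tau\sum_\sig m_\sig|R_\sig^n|\,|\varphi_K^n-\varphi_{K\sig}^n|\to 0$. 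For the boundary term, replacing $F_{K\sig}^n=\a_\sig\rho_\sig^n-\b_\sig$ and using Lemma~\ref{lem:trace.2} (so that $\wt\gamma\rho_{\Tt,\tau}\to\gamma\rho$ strongly) together with the regularity of $\a,\b$, one obtains $\iint_{\R_+\times\G}(\a\,\gamma\rho+\b)\varphi$ — note the sign, since in~\eqref{eq:weak} the boundary term is $-\iint(\a\gamma\rho+\b)\varphi$ whereas the outward flux sign from~\eqref{eq:cons.1} is $+m_\sig F_{K\sig}^n$; I would track this carefully. Collecting all limits yields precisely~\eqref{eq:weak}.

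The main obstacle I expect is the treatment of the drift/convection term, and within it the remainder $R_\sig^n$: unlike the diffusion term, $R_\sig^n$ involves $(\rho_K^n-\rho_{K\sig}^n)^2/d_\sig$ which is only controlled in the summed-in-time sense by Lemma~\ref{lem:dissip*}, so one needs the extra $d_\sig$-smallness coming from $\varphi_K^n-\varphi_{K\sig}^n=\Oo(d_\sig)$ and from the explicit $(\phi_K-\phi_{K\sig})$ and $(\phi_K-\phi_{K\sig})^2$ prefactors to beat it; the $\sinh$/$\cosh$ factors with argument $\tfrac12 r_\sig^n(\phi_K-\phi_{K\sig})$ are bounded uniformly by $\cosh(\|\phi\|_\infty/2)$, so this is manageable but requires care. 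A secondary subtlety is handling test functions that do not vanish on $\G$: Lemma~\ref{lem:dissip} was stated for $\varphi$ compactly supported in $\O$, so here I must redo the bookkeeping keeping the boundary faces, and rely on the boundary-flux expression~\eqref{eq:FKsig.ext} being \emph{exactly} affine in $\rho_\sig^n$ to pass to the limit there without needing a dissipation bound on boundary fluxes. Finally, a routine but necessary check is that all the reconstruction errors (cell-averaging of $\varphi$, Taylor expansions in $t$ and $x$, replacement of $\phi(x_K)$ differences by $\grad\phi$) vanish at rate $\Oo(\delta_\Tt+\tau)$ uniformly, using $\zeta_\Tt\le\zeta_\star$.
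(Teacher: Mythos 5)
Your plan coincides with the paper's own proof: multiply \eqref{eq:cons.1} by $\tau\varphi_K^{n-1}$, perform the discrete summation by parts in time, split the flux term via conservativity into a bulk part expanded through \eqref{eq:FKsig.int.3} (diffusion, drift, remainder $R_\sig^n$ killed by Lemma~\ref{lem:dissip*} and the $\Oo(d_\sig)$ factors) and a boundary part handled through the affine expression \eqref{eq:FKsig.ext} together with Lemmas~\ref{lem:trace} and~\ref{lem:trace.2}, exactly as in Section~\ref{ssec:identify}. The sign question you flag on the boundary term is real but lies in the paper itself (the limit obtained is $\iint_{\R_+\times\G}(\a\,\gamma\rho-\b)\varphi$, consistent with $F\cdot\nu=\a\rho-\b$, whereas \eqref{eq:weak} displays $\a\,\gamma\rho+\b$), so it does not affect the validity of your approach.
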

\begin{proof}
Let $\varphi \in C^\infty_c(\R_+ \times \ov \O)$, then define $\varphi_K^n = \varphi(x_K,t_{n})$ and $\varphi_\sig^n = \varphi(x_\sig,t_{n})$ for all $K \in \Tt$, all $\sig \in \Ee_\text{ext}$ and 
$n \geq 0$. This allows to define the function $\varphi_{\Tt,\tau}$ by 
\[
\varphi_{\Tt,\tau}(t,x) = \varphi_K^{n-1} \quad \text{if}\; (t,x) \in [t_{n-1}, t_n) \times K.
\]
Multiplying~\eqref{eq:cons.1} by $\tau \varphi_{K}^{n-1}$ and summing over $K\in \Tt$ provides 
\be\label{eq:A+B}
A_{\Tt,\tau} + B_{\Tt,\tau} = 0, 
\ee
where we have set 
\[
A_{\Tt,\tau} =  \sum_{n\geq 1} \sum_{K\in\Tt} m_K \left(\rho_K^n - \rho_K^{n-1} \right) \varphi_K^{n-1}
, \qquad 
B_{\Tt,\tau} =  \sum_{n\geq 1} \tau \sum_{K\in\Tt} \sum_{\sig \in \Ee_K} m_\sig F_{K\sig}^n \varphi_K^{n-1}.
\]
Since $\varphi_K^n = 0$ for $n$ large enough, the term $A_{\Tt,\tau}$ can be rewritten as 
\[
A_{\Tt,\tau} =\;   -  \sum_{n\geq 1} \tau \sum_{K\in\Tt} m_K \rho_K^n \frac{\varphi_K^n - \varphi_K^{n-1}}\tau 
- \sum_{K\in\Tt} m_K \rho_K^0 \varphi_K^0.
\]
Then classical arguments (see for instance~\cite{EGH00}) allow to show that 
\be\label{eq:ATttau}
\lim_{\delta_\Tt, \tau \to 0} A_{\Tt,\tau} = - \iint_{\R_+ \times \O} \rho \p_t \varphi - \int_\O \rho^0 \varphi(0).
\ee
On the other hand, thanks to the conservativity of the fluxes, the term $B_{\Tt,\tau}$ reformulates as 
\[
B_{\Tt,\tau} =  \sum_{n\geq 1} \tau \sum_{\sig \in \Ee} m_\sig F_{K\sig}^n \left(\varphi_K^{n-1} - \varphi_{K\sig}^{n-1}\right) 
+  \sum_{n\geq 1} \tau \sum_{K\in\Tt} \sum_{\sig \in \Ee_{K,\text{ext}}} m_\sig F_{K\sig}^n \varphi_\sig^{n-1} 
=: B_{\Tt,\tau}^\text{bulk} + B_{\Tt,\tau}^\text{ext}.
\]
Using the expression of the boundary fluxes~\eqref{eq:FKsig.ext} in the term $B_{\Tt,\tau}^\text{ext}$ provides 
\[
B_{\Tt,\tau}^\text{ext} = 
 \iint_{\R_+\times \G} \left(\a_\Ee \wt \gamma \rho_{\Tt,\tau} - \b_\Ee \right) \wt \gamma \varphi_{\Tt,\tau}
\]
where  $\a_\Ee$ and $\b_\Ee$ are the piecewise constant (per edges $\sig \in \Ee_\text{ext}$) reconstructions on $\G$ build from the evaluation of $\a$ and $\b$ at $x_\sig$, 
and where 
\[
\wt \gamma \varphi_{\Tt, \tau}(t,x) = \varphi_\sig^{n-1} \quad \text{if}\; (t,x) \in [t_{n-1}, t_n) \times \sig, \; \sig \in \Ee_\text{ext}.
\]
Due to the Lipschitz regularity of $\a,\b$ and $\varphi$, their approximations 
 $\a_\Ee$, $\b_\Ee$ and $\wt \gamma \varphi_{\Tt,\tau}$ converge uniformly. 
 One concludes from the convergence of $\wt\gamma \rho_{\Tt,\tau}$ stated at Lemma~\ref{lem:trace.2} that 
 \be\label{eq:BTttau.ext}
\lim_{\delta_\Tt, \tau \to 0} B_{\Tt,\tau}^\text{ext} = 
 \iint_{\R_+\times \G} \left(\a \, \gamma \rho - \b \right)  \varphi.
\ee
For the term $B_{\Tt,\tau}^\text{bulk}$, we use the expression~\eqref{eq:FKsig.int.3} of the internal fluxes, leading to 
\be\label{eq:BTttau.int}
B_{\Tt,\tau}^\text{bulk} = B_{\Tt,\tau}^\text{diff} + B_{\Tt,\tau}^\text{conv} + R_{\Tt,\tau},
\ee
with 
\begin{align*}
B_{\Tt,\tau}^\text{diff} =\;& \sum_{n\geq 1} \tau \sum_{\sig \in \Ee} a_\sig (\rho_K^n - \rho_{K\sig}^n)(\varphi_K^{n-1} - \varphi_{K\sig}^{n-1}), \\
B_{\Tt,\tau}^\text{conv} =\;& \sum_{n\geq 1} \tau \sum_{\sig \in \Ee} a_\sig \frac{\eta(\rho_K^n)+\eta(\rho_{K\sig}^n)}2(\phi_K - \phi_{K\sig}) (\varphi_K^{n-1} - \varphi_{K\sig}^{n-1}), \\
R_{\Tt,\tau} =\;& \sum_{n\geq 1} \tau \sum_{\sig \in \Ee} m_\sig R_\sig^n (\varphi_K^{n-1} - \varphi_{K\sig}^{n-1}).
\end{align*}
We do not detail the proof of 
\be\label{eq:BTttau.diff}
B_{\Tt,\tau}^\text{diff} \underset{\delta_\Tt, \tau \to 0} \longrightarrow  \iint_{\R_+\times\O} \grad \rho \cdot \grad \varphi, 
\qquad 
B_{\Tt,\tau}^\text{conv} \underset{\delta_\Tt, \tau \to 0} \longrightarrow  \iint_{\R_+\times\O} \eta(\rho)\grad \phi \cdot \grad \varphi,
\ee
since similar terms have been studied in many contributions, see for instance~\cite{CCFG21} and references therein. 
It remains to show that $R_{\Tt,\tau}$ vanishes at the limit. We deduce from the expression~\eqref{eq:Rsign}, from the fact that $r_\sig^n \in (0,1)$, and from $\|\eta\|_\infty = 1/4$ that 
\begin{multline*}
\left|R_\sig^n \right| \leq \frac12 \left\|\grad \phi \right\|_\infty (\rho_K^n - \rho_{K\sig}^n)^2 \\
+ \frac{d_\sig} 8  \left\|\grad \phi \right\|_\infty^2 \left( \left|\rho_K^n - \rho_{K\sig}^n\right| \cosh \left\|\phi\right\|_\infty
+ d_\sig \left\|\grad \phi \right\|_\infty \left(\frac12 + (\rho_K^n - \rho_{K\sig}^n)^2\right) \frac12\cosh
\|\phi \|_\infty
\right). 
\end{multline*}
Therefore, using furthermore Lemma~\ref{lem:dissip*}, one readily shows that 
\be\label{eq:RTttau}
R_{\Tt,\tau} \leq C \delta_{\Tt}^2  \underset{\delta_\Tt, \tau \to 0}\longrightarrow 0. 
\ee
Putting~\eqref{eq:ATttau}--\eqref{eq:RTttau} together in~\eqref{eq:A+B} concludes the proof of Proposition~\ref{prop:identify}.
\end{proof}

\subsection{Uniqueness of the weak solution}\label{ssec:uniqueness}

So far, we established the convergence of the scheme towards a weak solution up to a subsequence. 
In order to show that the whole sequence converges, it suffices to show that the limit value is unique. 
This is a consequence of the following proposition. 

\begin{prop}\label{prop:unique}
The weak solution $\rho$ to~\eqref{eq:cons}--\eqref{eq:init} in the sense of Definition~\ref{def:weak} is unique.
\end{prop}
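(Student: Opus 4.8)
The plan is to deduce uniqueness from a quantitative $L^2(\O)$-stability estimate. Given two weak solutions $\rho_1,\rho_2$ in the sense of Definition~\ref{def:weak} sharing the same data, I would set $w=\rho_1-\rho_2$ and show that $\|w(t)\|_{L^2(\O)}\equiv 0$ via a Grönwall argument. The structure of~\eqref{eq:cons} is favorable: the diffusion is linear (a plain Laplacian once $\grad\cdot\grad\rho$ is expanded), the drift $\eta(\rho)\grad\phi$ is a \emph{Lipschitz} perturbation on the relevant range $\rho\in[0,1]$ since $\eta'(\rho)=1-2\rho$ is bounded by $1$ there, and the Robin flux $\a\gamma\rho-\b$ is affine in $\rho$ with $\a>0$, hence dissipative. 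So the only genuine work is to turn the formal energy identity into a rigorous one.

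The first step is therefore a time-regularity bootstrap on an arbitrary weak solution $\rho$. From $\rho\in L^2_\text{loc}(\R_+;H^1(\O))\cap L^\oo(\R_+\times\O;[0,1])$ and $\phi\in W^{1,\oo}(\O)$, the bulk flux $\eta(\rho)\grad\phi+\grad\rho$ lies in $L^2_\text{loc}(\R_+;L^2(\O)^d)$ (recall $0\le\eta(\rho)\le 1/4$), while the boundary flux $\a\gamma\rho-\b$ lies in $L^\oo(\R_+\times\G)$. Reading~\eqref{eq:weak} against test functions valued in $H^1(\O)$, whose traces are controlled in $L^2(\G)$, one gets $\p_t\rho\in L^2_\text{loc}(\R_+;(H^1(\O))^*)$. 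I would then invoke the classical theory of abstract parabolic equations for the Gelfand triple $H^1(\O)\hookrightarrow L^2(\O)\hookrightarrow(H^1(\O))^*$ to obtain: a representative $\rho\in C(\R_+;L^2(\O))$ with $\rho(0)=\rho^0$ in $L^2(\O)$; the density-extension of the admissible test functions in~\eqref{eq:weak} to all $\varphi\in L^2_\text{loc}(\R_+;H^1(\O))$ with $\p_t\varphi\in L^2_\text{loc}(\R_+;L^2(\O))$ and compact support in time; and the chain rule $\int_0^t\langle\p_t\rho,\rho\rangle=\tfrac12(\|\rho(t)\|_{L^2(\O)}^2-\|\rho^0\|_{L^2(\O)}^2)$.

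With this in hand the estimate is routine: subtracting the (extended) weak formulations for $\rho_1,\rho_2$, testing with $w$ localized in time on $[0,t]$, and using the chain rule together with $w(0)=0$ yields
\[
\tfrac12\|w(t)\|_{L^2(\O)}^2+\int_0^t\!\!\int_\O|\grad w|^2+\int_0^t\!\!\int_\G\a\,(\gamma w)^2=-\int_0^t\!\!\int_\O\big(\eta(\rho_1)-\eta(\rho_2)\big)\grad\phi\cdot\grad w.
\]
Since $|\eta(\rho_1)-\eta(\rho_2)|=|1-\rho_1-\rho_2|\,|w|\le|w|$ a.e.\ — here the built-in bound $0\le\rho_i\le1$ is essential — Cauchy--Schwarz and Young's inequality bound the right-hand side by $\tfrac12\int_0^t\|\grad w\|_{L^2(\O)}^2+\tfrac12\|\grad\phi\|_\oo^2\int_0^t\|w\|_{L^2(\O)}^2$. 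Absorbing the dissipation term, discarding the nonnegative boundary term (as $\a>0$ on $\G$), one is left with $\|w(t)\|_{L^2(\O)}^2\le\|\grad\phi\|_\oo^2\int_0^t\|w(s)\|_{L^2(\O)}^2\,ds$, so Grönwall's lemma forces $w\equiv0$, i.e.\ $\rho_1=\rho_2$; this in turn upgrades the subsequential convergence of Theorem~\ref{thm:conv} to convergence of the whole sequence.

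The main obstacle is the rigor of the second step — the $\p_t\rho\in L^2_\text{loc}((H^1)^*)$ / continuity-in-time / integration-by-parts machinery — carried out with the \emph{full} space $H^1(\O)$ rather than $H^1_0(\O)$, so that the Robin contribution is correctly carried through the duality pairing and ends up on the favorable side of the identity. Once one has checked that the weak formulation indeed accommodates $H^1(\O)$-valued test functions and that the energy chain rule holds, the remainder is a textbook Grönwall argument that, incidentally, is precisely where the proof uses both the $[0,1]$-bound on $\rho$ and the sign condition $\a>\b>0$.
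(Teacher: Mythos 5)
Your argument is correct, but it follows a genuinely different route from the paper. You prove an $L^2(\O)$-stability estimate: after a time-regularity bootstrap ($\p_t\rho\in L^2_\text{loc}(\R_+;(H^1(\O))^*)$, the Lions chain rule for the Gelfand triple, and the density extension of the test space), you test the difference of the two formulations with $w=\rho_1-\rho_2$ itself, keep the Robin term $\int_\G\a(\gamma w)^2\geq 0$ as a dissipative contribution, use $|\eta(\rho_1)-\eta(\rho_2)|\leq|w|$ and Young's inequality to absorb the drift into the diffusion, and conclude by Gr\"onwall in $L^2$. The paper instead performs an $H^{-1}$-type duality estimate: it restricts to test functions in $L^2((0,T);H^1_0(\O))$, so the boundary condition never enters, and chooses $\varphi(t)$ as the solution of the homogeneous Dirichlet problem $-\Delta\varphi(t)=\rho(t)-\check\rho(t)$, which turns the time term into $\tfrac12\|\rho(T)-\check\rho(T)\|_{H^{-1}}^2$ and the diffusion term into $\|\rho-\check\rho\|_{L^2((0,T)\times\O)}^2$, the drift being absorbed exactly as in your computation before applying Gr\"onwall in the $H^{-1}$ norm. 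Both proofs hinge on the same two structural facts you isolate ($\eta$ is $1$-Lipschitz on $[0,1]$, and the problem's dissipativity), but the functional-analytic overhead is placed differently: your approach must justify testing with $H^1(\O)$-valued functions whose traces do not vanish, and in exchange it yields a stronger ($L^2$ rather than $H^{-1}$) stability estimate and exploits the sign $\a>0$ of the Robin term explicitly; the paper's approach needs only $H^1_0$-valued test functions and avoids any trace discussion, at the price of introducing the auxiliary elliptic problem and measuring the difference in the weaker dual norm.
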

\begin{proof}
Let $\rho$ and $\check \rho$ be two weak solutions corresponding to the same initial data $\rho^0$, and let $T$ be an arbitrary time horizon, then subtracting 
their respective weak formulations leads to 
\be\label{eq:unique.1}
\int_0^T \langle \p_t (\rho-\check \rho) , \varphi \rangle_{H^{-1},H^1_0}  + \int_0^T\int_{\O} \left( \left(\eta(\rho) - \eta(\check \rho)\right) \grad \phi + \grad (\rho - \check \rho) \right) \cdot \grad \varphi = 0
\ee
for all $\varphi \in  L^2((0,T);H^1_0(\O))$.
Choose $\varphi$ as the solution to 
\[
- \Delta \varphi(t) = \rho(t) - \check \rho(t) \quad \text{in}\; \O, \qquad 
\varphi(t) = 0 \quad \text{on}\; \G, \qquad t \in [0,T],
\]
then one readily checks that $\|\grad \varphi(t)\|_{L^2(\O)^d} = \|\rho(t) - \check \rho(t)\|_{H^{-1}(\O)}$. Moreover, 
$\p_t \varphi$ also belongs to $L^2((0,T);H^1_0(\O))$ since $\p_t(\rho-\check \rho)$ belongs to $L^2((0,T);H^{-1}(\O))$.
Therefore, 
\[
\int_0^T \langle \p_t (\rho-\check \rho) , \varphi \rangle_{H^{-1},H^1_0} = \int_0^T \int_\O \p_t \grad \varphi \cdot \grad \varphi
=  \frac12 \| \grad \varphi(T) \|_{(L^2(\O))^d}^2 
\]
since $\varphi(0) = 0$. As a consequence, 
 \eqref{eq:unique.1} yields 
\begin{align*}
\frac12  \|\rho(T) - \check \rho(T)\|_{H^{-1}(\O)}^2 + \| \rho - \check \rho\|_{L^2((0,T)\times \O)}^2 
=&\; -  \int_0^T\int_{\O}  \left(\eta(\rho) - \eta(\check \rho)\right) \grad \phi \cdot \grad \varphi \\
\leq&\; \|\grad \phi\|_\infty  \int_0^T\int_{\O} |\rho-\check\rho| |\grad \varphi|.
\end{align*}
Then we deduce from Young's inequality that 
\[ 
\|\rho(T) - \check \rho(T)\|_{H^{-1}(\O)}^2 \leq  \frac{\|\grad \phi\|_\infty^2}2 \|\rho - \check \rho\|_{L^2((0,T);H^{-1}(\O))}^2.
\]
The above inequality holds for all $T\geq 0$, and we deduce from Gronwall Lemma together and from the fact that $\rho(0) = \check \rho(0) = \rho^0$ that $\|\rho(T) - \check \rho(T)\|_{H^{-1}(\O)} = 0$ for all $T \geq 0$. 
\end{proof}
% ================================================================
%					Numerical results
% ================================================================
\section{Numerical results}\label{sec:numerics}

Before presenting numerical results, let us comment briefly on some practical details concerning the effective implementation. Our code in based on Matlab. The resolution of the nonlinear system~\eqref{eq:cons.1}--\eqref{eq:FKsig}, 
{\magenta in its} compact form~\eqref{eq:bHh} is achieved thanks to Newton's method: 
\be\label{eq:Newton}
 \mathbb J(\brho^{n,\ell}) \delta \brho^{n,\ell} = - \Hh^n(\brho^{n,\ell}), \qquad \brho^{n,\ell+1} = \brho^{n,\ell} + \delta \brho^{n,\ell}, 
\ee
with $ \mathbb J$ standing for the Jacobian matrix of $\Hh^n$. Note that $\rho_\sig^n$, $\sig \in \Ee_\text{ext}$ is not considered as an unknown and is deduced from the cell values thanks to~\eqref{eq:rhosign}. 
We initialize~\eqref{eq:Newton} by setting $\brho^{n,0} = \brho^{n-1}$ and than iterate until $\|\delta \brho^{n,\ell}\|_\infty / \| \brho^{n,\ell+1}\|_\infty \leq 10^{-12}$. Then we set $\rho^{n} = \rho^{n,\ell+1}$. 

\subsection{Numerical evidence of the convergence}\label{ssec:num.conv}

The first numerical test we propose aims at confirming our intuition concerning the second order accuracy in space of the scheme sketched in Section~\ref{ssec:scheme}. To this end, we consider a one-dimensional domain $\O = (0,1)$. 
We consider a slightly more general case than the one addressed in the paper by introducing some parameter $\epsilon>0$ (referred later on as the inverse P\'eclet number) in front of the diffusion term in~\eqref{eq:cons.02}:
\be\label{eq:cons.eps}
F  + \eta(\rho) \p_x \phi + \epsilon \p_x \rho = 0. 
\ee
The bulk numerical flux formula~\eqref{eq:FKsig.int} is tuned into
\be\label{eq:FKsig.int.eps}
F_{K\sig}^n =  \frac\epsilon{d_\sig} \left[ \rho_K^n(1-\rho_L^n) e^{\frac1{2\epsilon}(\phi_K - \phi_L)} 
-  \rho_L^n(1-\rho_K^n) e^{\frac1{2\epsilon}(\phi_L - \phi_K)} \right] 
\quad \text{for}\; \sig = K|L \in \Ee_\text{int}. 
\ee
The boundary condition~\eqref{eq:boundary.1} remains unchanged at the continuous level, yet the discrete 
external fluxes are modified into 
\be\label{eq:FKsig.ext.eps}
F_{K\sig}^n = \frac\epsilon{d_\sig} \left[ \rho_K^n(1-\rho_\sig^n) e^{\frac1{2\epsilon}(\phi_K - \phi_\sig)} 
-  \rho_\sig^n(1-\rho_K^n) e^{\frac1{2\epsilon}(\phi_\sig - \phi_K)} \right] = \a_\sig \rho_\sig^n - \b_\sig, \quad \quad \text{for}\; \sig \in \Ee_\text{int}, 
\ee
with the updated boundary density value
\be\label{eq:rhosign.eps}
\rho_\sig^n = \frac{d_\sig \b_\sig + \epsilon \rho_K^n e^{\frac1{2\epsilon}(\phi_K-\phi_\sig)}}{d_\sig \a_\sig +  \epsilon \rho_K^n e^{\frac1{2\epsilon}(\phi_K-\phi_\sig)} + \epsilon (1-\rho_K^n)  e^{-\frac1{2\epsilon}(\phi_K-\phi_\sig)}}.
\ee
The extension of our analysis to this framework is straightforward for fixed values of $\epsilon>0$. 
In our test case, the functions $\alpha$ and $\beta$ defined on $\G = \{0,1\}$ are chosen constant, with $\alpha = 1$ and $\beta = 1/2$. Concerning the external potential, we set $\phi(x) = 1-x$, so that the drift $\p_x \phi$ is constant. 
As an initial data, we choose 
\[
\rho^0(x) = \begin{cases}
1 & \text{if}\; x < 1/2,\\
0 & \text{otherwise.}
\end{cases}
\]
The domain $\O$ is discretized with a successively refined uniform grid. The final time is set to $T=2$, whereas the time step $\tau=10^{-2}$ remains unchanged, in opposition to the spatial mesh size. A reference solution is computed on a fine grid made of 51200 cells.

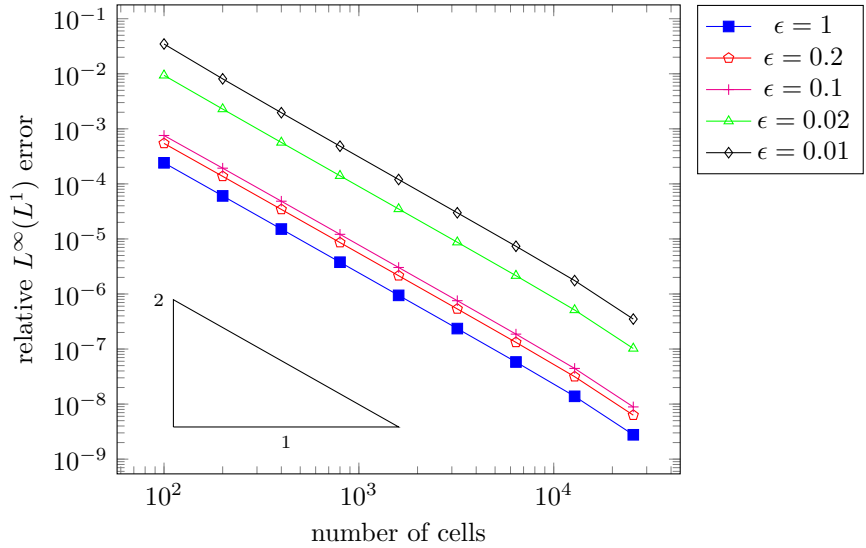
\begin{figure}[htb]
\begin{tikzpicture}
	\begin{loglogaxis}[
	xlabel=number of cells,
	ylabel=relative $L^\oo(L^1)$ error,
	legend style={
		%cells={anchor=east},
		legend pos= outer north east,
	},
	width=0.6\linewidth]%the width is tailored to fit in the margins, please think twice before changing it.
	
	\addplot[mark=square*, color=blue] table[x=NbCells, y=errLinfL1] {num1D/conv_SQRA_1.txt};
	\addlegendentry{$\epsilon = 1$}
	\addplot[mark=pentagon, color=red] table[x=NbCells, y=errLinfL1] {num1D/conv_SQRA_5.txt};
	\addlegendentry{$\epsilon = 0.2$}
	\addplot[mark=+, color=magenta] table[x=NbCells, y=errLinfL1] {num1D/conv_SQRA_10.txt};
	\addlegendentry{$\epsilon = 0.1$}
	\addplot[mark=triangle, color=green] table[x=NbCells, y=errLinfL1] {num1D/conv_SQRA_50.txt};
	\addlegendentry{$\epsilon = 0.02$}
	\addplot[mark=diamond, color=black] table[x=NbCells, y=errLinfL1] {num1D/conv_SQRA_100.txt};
	\addlegendentry{$\epsilon = 0.01$}
	\logLogSlopeTriangle{0.1}{-0.4}{0.1}{2}{black};
	\end{loglogaxis}
\end{tikzpicture}
\caption{Evolution $L^\infty((0,T);L^1(\O))$ relative errors as a function of the number of cells in the spatial discretization for various inverse Péclet numbers $\epsilon$.}
\label{fig:conv}
\end{figure}

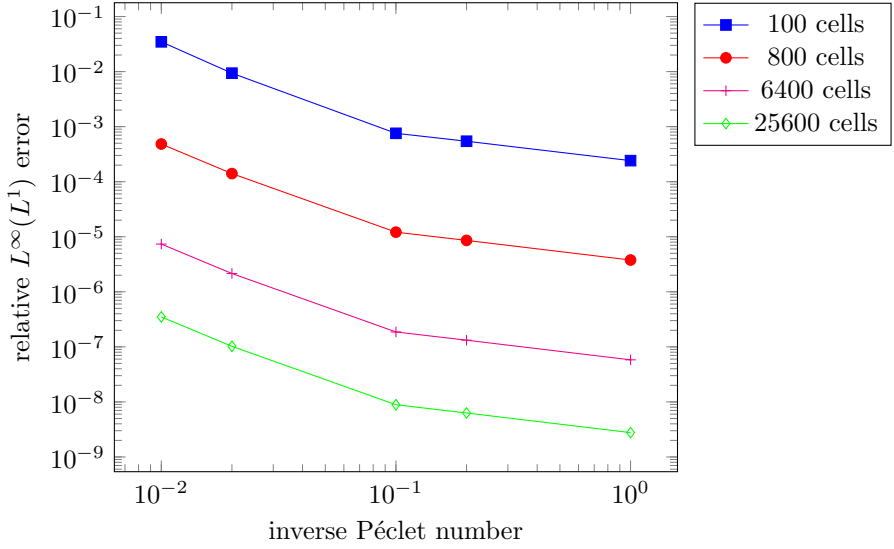
\begin{figure}[htb]
\begin{tikzpicture}
	\begin{loglogaxis}[
	xlabel=inverse Péclet number,
	ylabel=relative $L^\oo(L^1)$ error,
	legend style={
		%cells={anchor=east},
		legend pos= outer north east,
	},
	width=0.6\linewidth]%the width is tailored to fit in the margins, please think twice before changing it.
	
	\addplot[mark=square*, color=blue] table[x=invPeclet, y=errLinfL1] {num1D/conv_SQRA_Peclet_100.txt};
	\addlegendentry{100 cells}
	\addplot[mark=*, color=red] table[x=invPeclet, y=errLinfL1] {num1D/conv_SQRA_Peclet_800.txt};
	\addlegendentry{800 cells}
	\addplot[mark=+, color=magenta] table[x=invPeclet, y=errLinfL1] {num1D/conv_SQRA_Peclet_6400.txt};
	\addlegendentry{6400 cells}
	\addplot[mark=diamond, color=green] table[x=invPeclet, y=errLinfL1] {num1D/conv_SQRA_Peclet_25600.txt};
	\addlegendentry{25600 cells}
	\end{loglogaxis}
\end{tikzpicture}
\caption{Evolution $L^\infty((0,T);L^1(\O))$ on different meshes depending on the inverse Péclet number $\epsilon$.}
\label{fig:Peclet}
\end{figure}

We illustrate on Figure~\ref{fig:conv} the second order convergence in space that was expected from the discussion of Section~\ref{ssec:scheme}. One notices that the error increases when the inverse Péclet number decreases. To better illustrate this point, we plot on Figure~\ref{fig:Peclet} the evolution of the error as a function of $\epsilon$. Such a behavior is expected since the scheme is not asymptotic preserving in the sense that the scheme corresponding to the limit $\epsilon=0$ is not consistent with the limiting hyperbolic continuous equation.

\subsection{Energy stability and long-time behavior}\label{ssec:num.NRG}

Our second numerical experiment is performed on a 2D Delaunay mesh made of 7374 triangles. 
Our goal is here twofold. First, we give a numerical evidence of the fact that the total energy $\Ff_\text{tot}$ decreases along time, while the bulk energy $\Ff(\rho)$ remains bounded. As in Section~\ref{ssec:num.conv}, we introduce the inverse P\'eclet number $\epsilon$. The energy has to  be adapted accordingly by setting 
\[\Ff(\rho) = \int_\O (\epsilon h(\rho) + \rho \phi)\]
with $\phi(x) = 1 - x_2$ for $x=(x_1,x_2) \in \O$. As an initial data, we choose $\rho^0(x) = 1$ if $x \in (0,1/2) \times (0,1/2)$ and $\rho^0(x) = 0$ otherwise.

{\magenta
Two sets of boundary conditions are considered in this section. 
\begin{itemize}
\item First, we fix $\alpha$ and $\beta$ so that there exists some thermal equilibrium. More precisely, we set 
\be\label{eq:ab_eq}
\alpha = 1+ e^{-\frac{\phi-1/2}\epsilon} \quad\text{and}\quad \beta = e^{-\frac{\phi-1/2}\epsilon}.
\ee
The corresponding thermal equilibrium is then given by 
\be\label{eq:rho_eq}
\rho^\infty = \frac{e^{-\frac{\phi-1/2}\epsilon}}{1+e^{-\frac{\phi-1/2}\epsilon}} .
\ee
The inverse Péclet number $\epsilon$ is set to $0.1$. 
\item Second, we choose generic $\alpha$ and $\beta$, for which no thermal equilibrium can be found:
\be\label{eq:ab_noneq}
\alpha \equiv 1, \quad \beta(x) = \frac1{10} + \frac{4}{5}\left( \cos^2\left(\frac{3\pi x_2}{2}\right)+
        (2 x_2 - 1) \sin\left(\pi x_1\right)\right)
, \qquad x = (x_1,x_2) \in \O. 
\ee
Here, we set $\epsilon = 0.01$.
\end{itemize}
%Concerning the boundary conditions, $\alpha=1$ is set constant, whereas 
%\[
%\beta(x) = \frac1{10} + \frac{4}{5}\left( \cos^2\left(\frac{3\pi x_2}{2}\right)+
%        (2 x_2 - 1) \sin\left(\pi x_1\right)\right)
%, \qquad x = (x_1,x_2) \in \O. 
%\]
}

{\magenta
Let us first address the equilibrium case~\eqref{eq:ab_eq}. Let $\brho^\infty$ be the discrete thermal equilibrium  as in Remark~\ref{rem:equilibrium}, i.e. 
\[
\rho_K^\infty = \frac{e^{-\frac{\phi_K-1/2}\epsilon}}{1+e^{-\frac{\phi_K-1/2}\epsilon}}, \qquad K\in\Tt, 
\]
and denote by $\rho_\Tt^\infty$ the approximate steady state defined by $\rho_\Tt^\infty(x) = \rho_K^\infty$ if $x \in K$.
Then Figure~\ref{fig:exp_eq} exhibits the exponential convergence of $\rho_{\Tt,\tau}$ towards $\rho_\Tt^\infty$. 
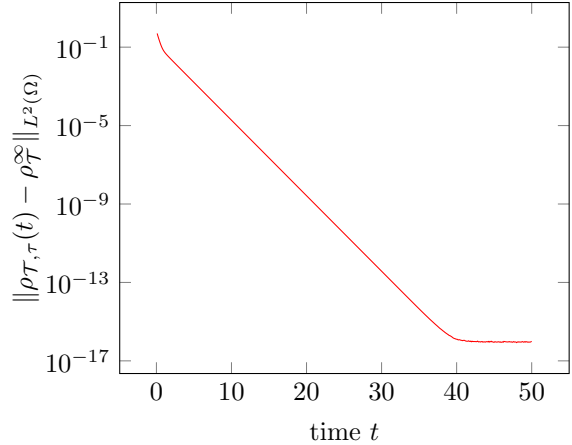
\begin{figure}[htb]
\begin{tikzpicture}
	\begin{semilogyaxis}
	[
	xlabel= time $t$,
	ylabel= $\| \rho_{\Tt,\tau}(t)-\rho_{\Tt}^\infty\|_{L^2(\O)}$,
	width=0.5\linewidth]%the width is tailored to fit in the margins, please think twice before changing it.
	
	\addplot[color=red] table[x=time, y=errL2] {num2D/TempsLong_thermal.txt};
	\end{semilogyaxis}
\end{tikzpicture}
\caption{Evolution of the $L^2$-distance between $\rho_{\Tt,\tau}(t,\cdot)$ and $\rho_{\Tt}^\infty$ as a function of $t$ {\magenta-- equilibrium case~\eqref{eq:ab_eq}}.}
\label{fig:exp_eq}
\end{figure}
}

{\magenta
We now turn to the case of non-equilibrium boundary conditions~\eqref{eq:ab_noneq}.}
Snapshots of the solution are presented on Figure~\ref{fig:snapshots}.
\begin{figure}[htb]
\begin{tabular}{ccc}
\includegraphics[width=.3\textwidth]{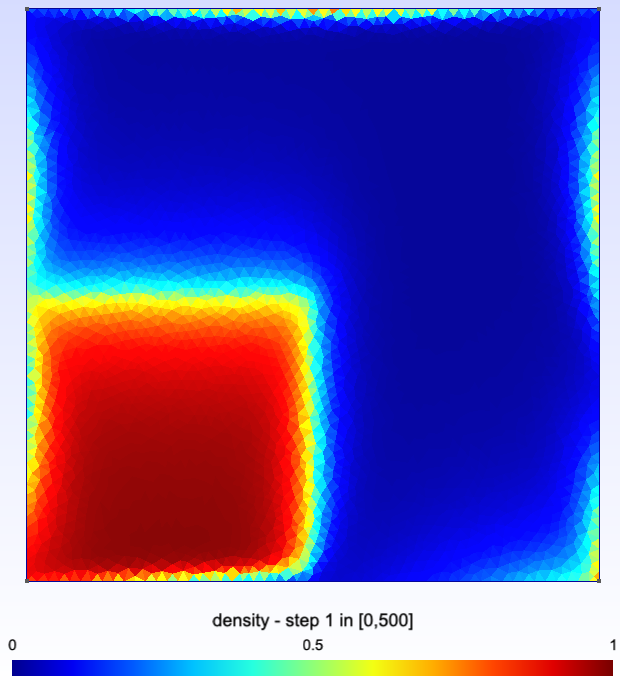} &
\includegraphics[width=.3\textwidth]{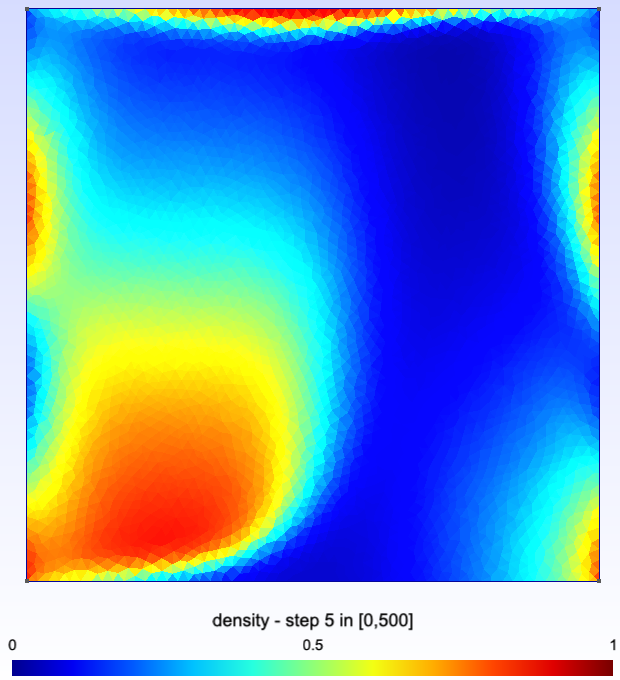} &
\includegraphics[width=.3\textwidth]{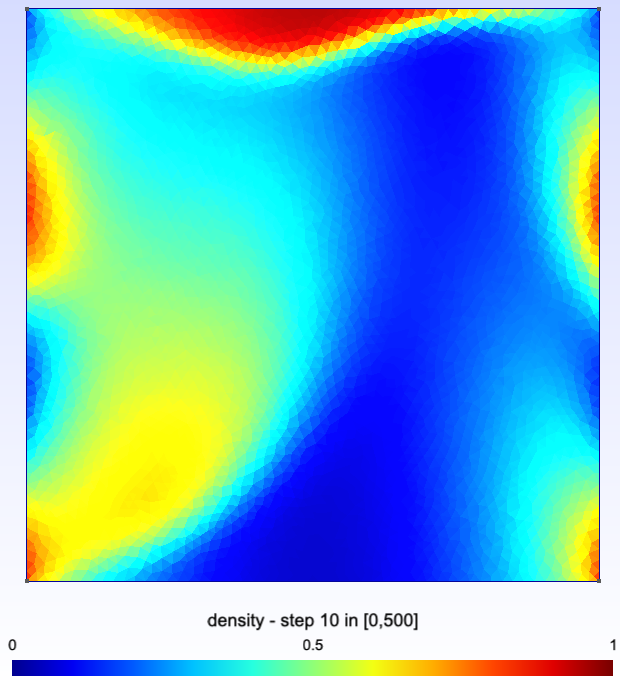} \\
$t=0.1$ & $t=0.5$ & $t=1$\\[10pt]
\includegraphics[width=.3\textwidth]{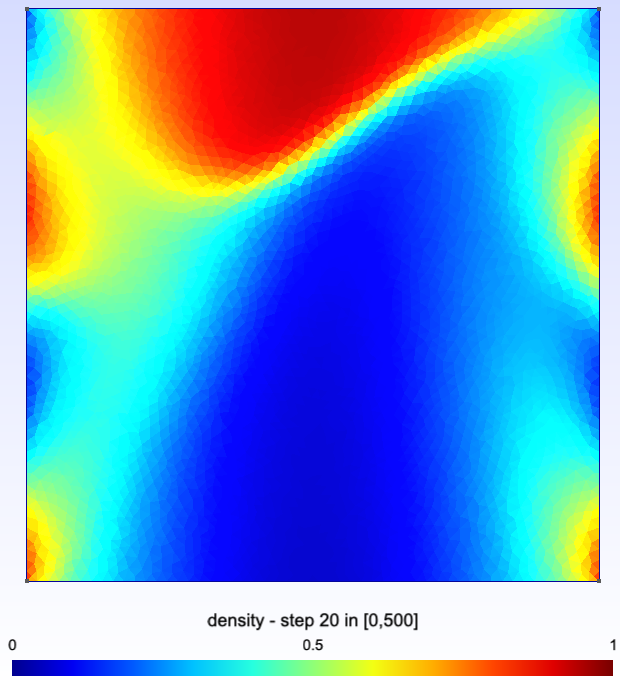} &
\includegraphics[width=.3\textwidth]{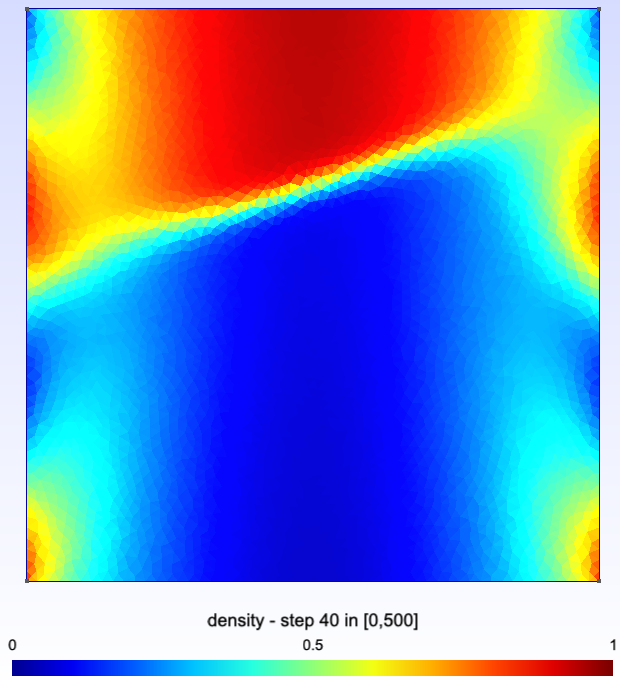} &
\includegraphics[width=.3\textwidth]{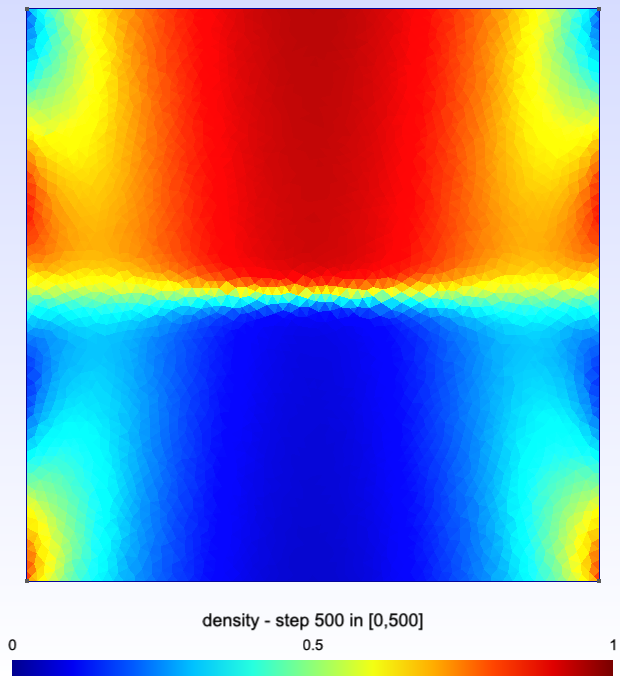} \\
$t=2$ & $t=4$ & $t=50$
\end{tabular}
\caption{Snapshots of the solution at different times {\magenta-- non-equilibrium case~\eqref{eq:ab_noneq}}.} \label{fig:snapshots}
\end{figure}

We plot on Figure~\ref{fig:NRG} the evolution of the bulk and total energies along time. As expected, $\Ff_\text{tot}$ is decreasing with linear decay, while $\Ff(\rho)$ remains bounded along time. 
\begin{figure}[htb]
\begin{tikzpicture}
	\begin{axis}
	[
	xlabel= time $t$,
	ylabel=total energy $\Ff_\text{tot}$,
	width=0.4\linewidth]%the width is tailored to fit in the margins, please think twice before changing it.
	
	\addplot[color=red] table[x=time, y=NRG_tot] {num2D/NRG.txt};
	\end{axis}
\end{tikzpicture}
\hspace{.1\linewidth}
\begin{tikzpicture}
	\begin{axis}
	[
	xlabel= time $t$,
	ylabel=internal energy $\Ff(\rho)$,
	width=0.4\linewidth]%the width is tailored to fit in the margins, please think twice before changing it.
	
	\addplot[color=red] table[x=time, y=NRG_int] {num2D/NRG.txt};

	\end{axis}
\end{tikzpicture}
\caption{Evolution of the total energy (left) and of the bulk energy (right) along time {\magenta-- non-equilibrium case~\eqref{eq:ab_noneq}}.}
\label{fig:NRG}
\end{figure}
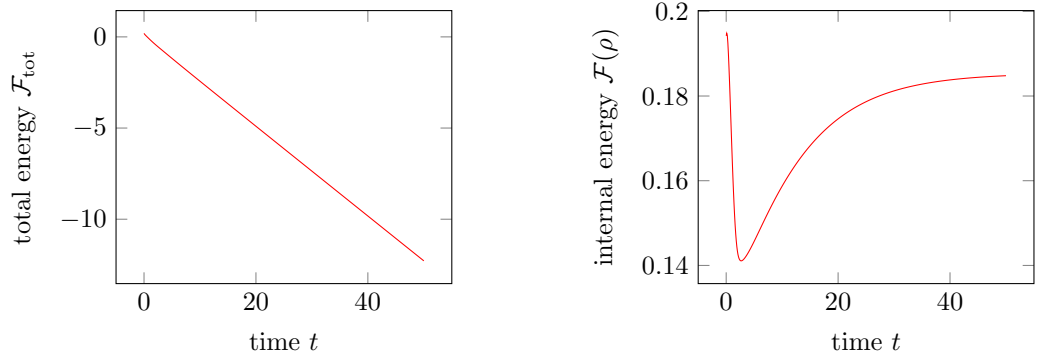

We make use of a uniform time step $\tau=0.1$ until we reach the final time {\magenta $T=200$. Then the steady longtime limit $\brho^\infty$ corresponding to 
$t=10^4$  is computed with larger time step $\tau = 100$. Even though there is no thermal equilibrium for the test-case under consideration, the numerical solution 
still exponentially converges towards the steady state, as shows Figure~\ref{fig:exp_noneq}. The nonlinearity of our problem~\eqref{eq:cons}--\eqref{eq:init} does not enter 
the framework proposed in~\cite{BLMV14}, the extension of which to the discrete setting~\cite{FH17, CHH20} do not apply directly. The proof of the exponential convergence of the scheme towards non-equilibrium steady states should be addressed in future works.
\begin{figure}[htb]
\begin{tikzpicture}
	\begin{semilogyaxis}
	[
	xlabel= time $t$,
	ylabel= $\| \rho_{\Tt,\tau}(t)-\rho_\Tt^\infty\|_{L^2(\O)}$,
	width=0.5\linewidth]%the width is tailored to fit in the margins, please think twice before changing it.
	
	\addplot[color=red] table[x=time, y=errL2] {num2D/TempsLong.txt};
	\end{semilogyaxis}
\end{tikzpicture}
\caption{Evolution of the $L^2$-distance between $\rho_{\Tt,\tau}(t,\cdot)$ and $\rho_{\Tt}^\oo$ as a function of $t$ {\magenta-- non-equilibrium case~\eqref{eq:ab_noneq}}.}
\label{fig:exp_noneq}
\end{figure}
}

{\magenta Finally} we highlight the good behavior of the numerical scheme when it comes to the effective resolution of the induced nonlinear system. 
 As expected, the highest number of required Newton iteration corresponds to the initial time steps where only 17 Newton iterations are required although $\brho^1$ significantly differs from $\brho^0$. 
 As time goes, this number decreases. In our test case, the steady state is not yet reached for $T=50$ and still $9$ Newton iterations per time step 
 are needed to solve the nonlinear system. This number can be importantly decreased for less demanding stopping criteria. 
The number of required Newton iterations at each time step is  reported on Figure~\ref{fig:Newton}. 
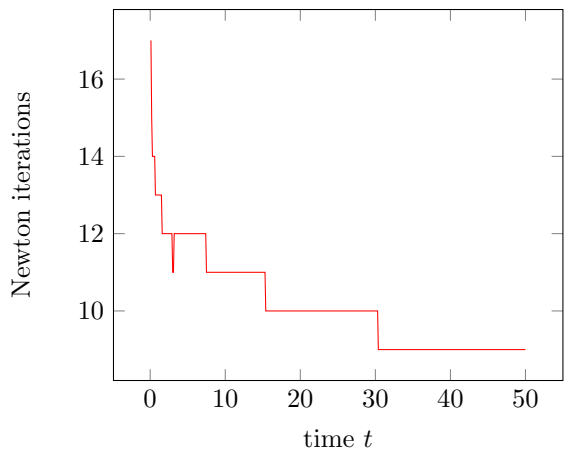
\begin{figure}[htb]
\begin{tikzpicture}
	\begin{axis}
	[
	xlabel= time $t$,
	ylabel= Newton iterations,
	width=0.5\linewidth]%the width is tailored to fit in the margins, please think twice before changing it.
	
	\addplot[color=red] table[x=time, y=iterations] {num2D/Newton.txt};

	\end{axis}
\end{tikzpicture}
\caption{Number of Newton iterations required to solve the nonlinear system at each time step {\magenta-- non-equilibrium case~\eqref{eq:ab_noneq}}.}\label{fig:Newton}
\end{figure}

\subsection*{Acknowledgements} This project has received funding from the European Union's Horizon 2020 research 
and innovation programme under grant agreement No 847593 (EURAD program, WP DONUT), 
and was further supported by Labex CEMPI (ANR-11-LABX-0007-01) 
and the Fédération de Recherche Mathématique des Hauts-de-France (FR CNRS 2037). 
C. Cancès also acknowledges support from the COMODO (ANR-19-CE46-0002) and MICMOV (ANR-19-CE40-0012) projects. 
J. Venel warmly thanks the Inria research center of the University of Lille for its hospitality and support, and the authors 
further thank Claire Chainais for stimulating discussions.

\end{document}